\renewcommand{\baselinestretch}{1.4}
\newcommand{\RN}{\mathbb{R}^N}
\DeclareMathOperator\USC{USC}
\DeclareMathOperator\LSC{LSC}
\DeclareMathOperator\dist{dist}
\DeclareMathOperator\Lip{Lip}
\def\cO{\mathcal{O}}
\def\R{\mathbb{R}}
\def\Q{\mathbb{Q}}
\def\Z{\mathbb{Z}}
\def\N{\mathbb{N}}
\def\cF{\mathcal{F}}
\def\cP{\mathcal{P}}
\def\cL{\mathcal{L}}
\def\cS{\mathcal{S}}
\def\cA{\mathcal{A}}
\def\bye{\end{document}}
\def\by{\end{proof}\bye}
\def\hello{\begin{document}}
\def\fr{\frac} 
\def\disp{\displaystyle}  
\def\ga{\alpha}     
\def\go{\omega}
\def\gep{\varepsilon}      
\def\ep{\gep}    
\def\mid{\,:\,}   
\def\gb{\beta} 
\def\gam{\gamma}
\def\gd{\delta}
\def\gz{\zeta} 
\def\gth{\theta}   
\def\gk{\kappa} 
\def\gl{\lambda}
\def\gL{\Lambda}
\def\gs{\sigma}   
\def\gf{\varphi}                  
\def\tim{\times}                        
\def\aln{&\,}
\def\ol{\overline}
\def\ul{\underline}           
\def\pl{\partial}
\def\hb{\text}                
\def\cF{\mathcal{F}}
\def\Int{\mathop{\text{int}}}
\def\gG{\varGamma}
\def\lan{\langle}
\def\ran{\rangle}
\def\cD{\mathcal{D}}
\def\cB{\mathcal{B}}
\def\bcases{\begin{cases}}
\def\ecases{\end{cases}}
\def\balns{\begin{align*}}
\def\ealns{\end{align*}}
\def\balnd{\begin{aligned}}
\def\ealnd{\end{aligned}}
\def\1{\mathbf{1}}
\def\bproof{\begin{proof}}
\def\eproof{\end{proof}}
\theoremstyle{definition}
\newtheorem{definition}{Definition}
\theoremstyle{plain}
\newtheorem{theorem}[definition]{Theorem}
\newtheorem{corollary}[definition]{Corollary}
\newtheorem{lemma}[definition]{Lemma}
\newtheorem{proposition}[definition]{Proposition}
\theoremstyle{remark}
\newtheorem{remark}[definition]{Remark}
\newtheorem{notation}[definition]{Notation}
\def\red#1{\textcolor{red}{#1}}
\def\blu#1{\textcolor{blue}{#1}}
\def\beq{\begin{equation}}
\def\eeq{\end{equation}}
\def\bthm{\begin{theorem}}
\def\ethm{\end{theorem}}
\def\bproof{\begin{proof}}
\def\eproof{\end{proof}}
\renewcommand{\baselinestretch}{1.4} 
\def\R{\mathbb{R}}
\def\Q{\mathbb{Q}}
\def\Z{\mathbb{Z}}
\def\N{\mathbb{N}}
\def\cF{\mathcal{F}}
\def\cP{\mathcal{P}}
\def\cL{\mathcal{L}} 
\def\cG{\mathcal{G}}
\def\cS{\mathcal{S}}
\def\cA{\mathcal{A}}
\def\disp{\displaystyle}       
\def\mid{\,:\,}   
\def\gl{\lambda}
\def\gL{\Lambda}
\def\gs{\sigma}   
\def\gf{\varphi}                  
\def\tim{\times}                        
\def\ol{\overline}
\def\ul{\underline}           
\def\pl{\partial}
\def\hb{\text}                
\def\cF{\mathcal{F}}
\def\Int{\mathop{\text{int}}}
\def\lan{\langle}
\def\ran{\rangle}
\def\cD{\mathcal{D}}
\def\cB{\mathcal{B}}
\def\bcases{\begin{cases}}
\def\ecases{\end{cases}}
\def\balns{\begin{align*}}
\def\ealns{\end{align*}}
\def\balnd{\begin{aligned}}
\def\ealnd{\end{aligned}}
\def\1{\mathbf{1}}
\def\red#1{\textcolor{red}{#1}}
\def\blu#1{\textcolor{blue}{#1}}
\def\beq{\begin{equation}}  
\def\eeq{\end{equation}}
\def\bthm{\begin{theorem}}
\def\ethm{\end{theorem}}
\def\bproof{\begin{proof}}
\def\eproof{\end{proof}}
\def\otim{\otimes}
\def\bald{\begin{aligned}}
\def\eald{\end{aligned}}
\def\stm{\setminus}
\long\def\/*#1*/{}
\def\Gth{\varTheta}
\def\gD{\varDelta}
\def\gG{\varGamma}
\def\cW{\mathcal{W}}
\def\what{\widehat}
\def\hM{\what M}
\def\eqr#1{\eqref{#1}}
\def\tin{\hbox{ in }}
\def\ton{\hbox{ on }}
\def\erf{\eqref}
\def\AND{\hbox{ and }}
\def\thinsk{{\vrule height0pt width1pt depth 0pt}}
\def\bmat{\begin{pmatrix}}
\def\emat{\end{pmatrix}}
\numberwithin{equation}{section}
\def\ol{\overline}
\newcommand{\vep}{\varepsilonroof}
\newcommand{\vro}{\varrho}
\def\vtheta{\vartheta}
\newcommand{\un}{u_n}
\newcommand{\uzn}{u_{0n}}
\newcommand{\fn}{f_n}
\newcommand{\um}{u_m}
\def\H{{\rm ({\bf H})}}
\def\grad{\nabla}
\newcommand{\refe}[1]{{(\ref{#1})}}
\newcommand{\dis}{\displaystyle}
\newcommand{\lbl}[1]{\label{#1}}
\newcommand{\convd}{\rightharpoonup}
\newcommand{\noi}{\noindent}
\newcommand{\dive}{{\rm div}}
\newcommand{\mis}{{\rm meas}}
\newcommand{\Mm}{\mathcal{M}^-_{\lambda, \Lambda}}
\newcommand{\Mp}{\mathcal{M}^+_{\lambda, \Lambda}}
\newcommand{\Pmo}{\mathcal{P}^-_{1}}
\newcommand{\Ppo}{\mathcal{P}^+_{1}}
\newcommand{\Pmk}{\mathcal{P}^-_{k}}
\newcommand{\Ppk}{\mathcal{P}^+_{k}}
\newcommand{\cC}{\mathcal{C}}
\newcommand{\uog}{u^\omega_\gamma}
\newcommand{\Oog}{\Omega^\omega_\gamma}
\newcommand{\fiog}{\phi^\omega_\gamma}
\newcommand{\Ooga}{\Omega^\omega_{\gamma, a}}
\newcommand{\uoga}{u^\omega_{\gamma, a}}
\DeclareMathOperator{\Conv}{Conv}
\DeclareMathOperator{\Span}{Span}
\DeclareMathOperator{\INT}{int}
\begin{document}
\parindent=0pt

\title[\textbf{Truncated Laplacians}]{\textbf{Existence through convexity for the\\
truncated Laplacians}}

\author[ ]{I. Birindelli, G. Galise, H. Ishii}

\address[\textsc{I. Birindelli}]{Dipartimento di Matematica\newline
\indent Sapienza Universit\`a  di Roma \newline
 \indent   P.le Aldo  Moro 2, I--00185 Roma, Italy.\newline
\indent E-mail: isabeau@mat.uniroma1.it}

\address[\textsc{G. Galise}]{Dipartimento di Matematica\newline
\indent Sapienza Universit\`a  di Roma \newline
 \indent   P.le Aldo  Moro 2, I--00185 Roma, Italy.\newline
\indent E-mail: galise@mat.uniroma1.it}

\address[\textsc{H. Ishii}]{Institute for Mathematics and Computer Science\newline
\indent Tsuda University  \newline
 \indent   2-1-1 Tsuda-machi, Kodaira-shi, Tokyo, 187-8577 Japan.\newline
\indent E-mail: hitoshi.ishii@waseda.jp}
\thanks{Part of this work was done while the third author 
was visiting the Dipartimento di Matematica,
Sapienza Universit\`a  di Roma in May, 2018. 
He would like to thank the department for its hospitality and financial support. 
His work was also partially supported by the KAKENHI \#26220702, \#16H03948,
\#18H00833, JSPS}
\begin{abstract}
We study the Dirichlet problem on a bounded convex domain of $\R^N$, with zero boundary data, for truncated Laplacians $\cP_k^\pm$, with $k<N$. 
We establish a necessary and sufficient condition (Theorem 1) in terms of the ``flatness'' of domains for existence of a solution for general inhomogeneous term. 
This result, in particular, shows that the strict convexity of the domain is sufficient 
for the solvability of the Dirichlet problem. 
The result and related 
ideas are applied to the solvability of the Dirichlet problem for the operator 
$\cP_k^+$ with lower order term when the domain is strictly convex and the existence of principal eigenfunctions for the operator $\cP_1^+$.  An existence theorem  is presented with regard to the principal eigenvalue for  
the Dirichlet problem with zero-th order term for the operator $\cP_1^+$. A nonexistence result is established for the operator $\cP_k^+$ with 
first order term when the domain has a boundary portion which is nearly flat.  
Furthermore, when the domain is a ball, we study the Dirichlet problem, 
with a constant inhomogeneous term and a possibly sign-changing first order term,
and the associated eigenvalue problem. 
\end{abstract}

\maketitle

\section{Introduction}
For any $N\times N$ symmetric matrix $X$, let 
\begin{equation}\label{arranged eigenvalues}
\lambda_1(X)\leq \lambda_2(X)\leq\cdots\leq \lambda_N(X) 
\end{equation}
be the ordered eigenvalues of $X$. For $k\in[1,N]$, $k$ integer, let 
\begin{equation}\label{def1}
\Pmk(D^2u)=\sum_{i=1}^k\lambda_i(D^2u) \qquad\text{and}\qquad \Ppk(D^2u)=\sum_{i=1}^k\lambda_{N+1-i}(D^2u).
\end{equation}
For $k=N$ these operators coincide with the Laplacian, hence we will always consider $k<N$. 

In the whole paper $\Omega$ will be a bounded domain of $\R^N$. The scope  
of the paper is to study existence of solutions for the following Dirichlet problem 
\begin{equation}\label{Pb00}
\left\{\begin{array}{cl}
\Ppk(D^2u)+ H(x,Du)= f(x) & \text{in $\Omega$}\\
u=0 & \text{on $\partial\Omega$}.
\end{array}\right.
\end{equation}
Throughout this paper, the Dirichlet boundary condition is understood in the classical pointwise sense.
Before 
describing the result of this paper, let us mention that the operators 
$\Ppk$ and $\Pmk$ come out naturally in geometrical problems in particular when considering 
manifolds of partially positive curvature, see \cite{Sha,Wu}, or mean curvature flow in arbitrary codimension, see \cite{AS}.
Lately the interest has been from a pure PDE theoretical point of view, starting from the works of Harvey and Lawson
\cite{HL1,HL2} and Caffarelli, Li and Nirenberg \cite{CLN1} continuing with \cite{OS} by Oberman and L. Silvestre on 
convex envelope.  See also \cite{CDLV, AGV, BGL, G, GV} for further contributions.\\Some analogies can be found in the work of Blanc and Rossi \cite{BR} but we will be more explicit about their work at the end of the introduction.

In \cite{BGI}, when $\Omega$ is uniformly convex, i.e. when there exists $R>0$ and $Y\subseteq\mathbb R^N$ such that 
\begin{equation}\label{u-convex}\Omega=\bigcap_{y\in Y}B_R(y)\eeq
we called these domains hula hoop domains and, in these domains we proved existence of solutions
for any bounded $f$ as long as $|H(x,p)-H(x,q)|\leq b|p-q|$ and $bR<k$.  

On the other hand, in \cite{BGI1}, if $\Omega$ is only convex, i.e. an intersection of half spaces or cubes, $k=1$ and 
$H\equiv 0$, existence was established under some sign 
condition on $f$ near the boundary of $\Omega$.

In a general sense we wish to understand up to which point these conditions are optimal.
We will see how these degenerate elliptic operators are extremely sensitive to the \lq\lq convexity\rq\rq 
of the domain and are strongly influenced by the presence of the first order term. 

In fact, in order to concentrate on the domain, we shall treat first the case where $H(x,D u)\equiv 0$. 

In a first step we shall see that convexity alone, does not allow to prove existence of supersolutions for any $f$. 
In order to solve the Dirichlet problem with general right hand side $f$ we should  impose that $\partial\Omega$ has at least $N-k$ directions of strict convexity.  We are now going to be more precise.

We can introduce a sort of \lq\lq classification\rq\rq\  of strict convexity.

Consider for $j=1,\ldots,N$ 
\begin{equation}\label{j strict convex}
C_j=\left\{C\subset\RN\;:\;C=\omega\times\R^{N-j},\;\omega\subset\R^j\;\,\text{bounded and strictly convex}\right\}.
\end{equation}
Henceforth we denote by $\mathcal{C}_j$ the class of all convex and bounded domains $\Omega\subset\RN$ which are intersection (up to rotations) of cylinders belonging to $C_j$. More precisely  $\Omega\in\mathcal{C}_j$ if, and only if, 
for each $x\in\pl\Omega$, there exist 
$O\in\cO^N$, with $\cO^N$ being the class of orthogonal $N\tim N$ matrices,
and $C\in C_j$ such that 
\begin{equation}\label{j strict convex 1}
\Omega\subset OC \ \ \text{ and }  \ \ x\in\pl(OC).
\end{equation}
We denote by $S_j=S_j(\Omega)$ the set of all $(O,C)\in\mathcal{O}^N\tim C_j$ such that 
for some $x\in \pl\Omega$, \eqref{j strict convex 1} is satisfied. 
One has 
\begin{equation}\label{j strict convex 2}
\Omega=\bigcap_{(O,C)\in S_j}OC \ \ \text{ if }\ \Omega\in\cC_j,
\eeq
and 
$$
\mathcal{C}_1\supset\mathcal{C}_2\supset\ldots\supset\mathcal{C}_N.
$$
Note that $\mathcal{C}_1$ and $\mathcal{C}_N$ correspond respectively to the class of bounded convex   and strictly convex domains.  
It may be useful to note that if $\omega\subset\R^j$, $C\subset \R^N$, 
and $O\in\cO^N$, then 
\[
\pl (\omega\tim\R^{N-j})=\pl\omega\tim\R^{N-j} \ \ \AND 
\ \ \pl (OC)=O\pl C. 
\]
It might be remarked at this point  that, when $\Omega$ is given by \erf{u-convex}, one can find 
$y\in \ol Y$ for each $x\in\pl\Omega$ such that 
\[
\Omega\subset B_R(y) \ \ \AND \ \ x\in\pl B_R(y). 
\]
(To check this, one may choose a sequence $z_j\in\R^N\stm \ol\Omega$ converging to $x$, then choose a sequence $y_j\in Y$ so that $z_j\not\in B_R(y_j)$, and send $j\to\infty$ along a subsequence so that the subsequence converges to a point $y\in\ol Y$. It is clear that $\Omega\subset B_R(y)$ and 
$x\in\partial B_R(y)$.)

This is the relationship between existence of solutions and \lq\lq strict convexity\rq\rq\ of the domain.
\begin{theorem}\label{Dirichlet00} Let $\Omega$ be a convex domain.
The Dirichlet problem
\begin{equation} \label{000}
\left\{\begin{array}{cl}
\Ppk(D^2u)=f(x) & \text{in $\Omega$}\\
u=0 & \text{on $\partial\Omega$}
\end{array}\right.
\end{equation}
has a unique solution for any bounded $f\in {\rm C}(\Omega)$ if and only if $\Omega\in {\mathcal C}_{N-k+1}$. 
\end{theorem}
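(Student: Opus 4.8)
The plan is to prove the two implications separately, with the main work split between constructing supersolutions (sufficiency) and constructing an obstruction (necessity).

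For the \emph{sufficiency} direction, assume $\Omega\in\cC_{N-k+1}$. The strategy is to build, for each boundary point $x_0\in\pl\Omega$, a smooth strict supersolution of $\Ppk(D^2 u)=f(x)$ on $\Omega$ that vanishes at $x_0$ and is positive (or at least nonnegative after shifting) elsewhere on $\ol\Omega$; patching these via a minimum over boundary points, or more directly by exploiting \eqref{j strict convex 2}, gives a global supersolution and a barrier at every boundary point. Concretely, if $(O,C)\in S_{N-k+1}(\Omega)$ realizes $x_0$, then after the rotation $O$ we may take $C=\omega\tim\R^{k-1}$ with $\omega\subset\R^{N-k+1}$ bounded and strictly convex; on $\omega$ one has a smooth strictly convex defining function $\psi$ with $\psi=0$ on $\pl\omega$ at the image of $x_0$, $\psi<0$ in $\omega$, and $D^2\psi\ge c I_{N-k+1}>0$. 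Extending $\psi$ to depend only on the first $N-k+1$ coordinates, the Hessian of $-M\psi - \text{(paraboloid)}$ has at least $N-k+1$ eigenvalues bounded below by $Mc$; the point is that among the top $k$ eigenvalues that $\Ppk$ selects, at least one comes from this strictly convex block as soon as $(N-k+1)+k>N$, which holds. Choosing $M$ large (using $f$ bounded and $\Omega$ bounded) makes $\Ppk(D^2 w)\le -\sup|f|$, i.e. $w$ is a supersolution; after normalizing it is a barrier at $x_0$. With barriers at every boundary point and the comparison principle for $\Ppk$ (degenerate elliptic, so Perron's method applies in the viscosity sense; since $f\in C(\Omega)$ is bounded, the Perron solution is continuous up to $\pl\Omega$ and attains $0$ there), existence and uniqueness follow.

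For the \emph{necessity} direction, suppose $\Omega\notin\cC_{N-k+1}$; then there is a boundary point $x_0$ admitting no supporting cylinder from $C_{N-k+1}$, which by convexity means the boundary is ``flat in too many directions'' at $x_0$: there is a supporting hyperplane whose contact set with $\pl\Omega$ contains a relatively open piece of an affine subspace of dimension $\ge k$ (equivalently, $\pl\Omega$ near $x_0$ contains a $k$-dimensional flat patch, after passing to the right rotation). On such a flat patch, take $f\equiv 1$ (a fixed positive constant); the idea is that any supersolution $u$ of $\Ppk(D^2u)\ge 1$ — wait, we need $\Ppk(D^2 u)=f$, so a solution is in particular a subsolution: $\Ppk(D^2 u)\ge 1$ in the viscosity sense. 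Restrict attention near the flat part: choose coordinates so that the flat $k$-dimensional patch lies in $\{x_{k+1}=\dots=x_N=0\}$ with $\Omega$ on the side $x_{k+1}>0,\dots$; I would test $u$ against quadratic polynomials that are large and negative in the flat directions, forcing the top $k$ eigenvalues of $D^2u$ to be dominated by those flat directions, so $\Ppk(D^2u)$ cannot stay $\ge 1$ while $u$ vanishes on the whole flat patch and stays bounded. More carefully, one slides a paraboloid $P_\e(x) = -\e^{-1}(\text{quadratic in flat directions}) + (\text{linear/transverse terms})$ under $u$; since $u=0$ on a $k$-dimensional flat set and $\Ppk$ only ``sees'' the $k$ largest eigenvalues, a touching-from-below argument yields $\Ppk(D^2 u)(x)\le 0$ at the contact point, contradicting $\Ppk(D^2u)\ge f>0$. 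Thus no solution exists for this $f$.

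The main obstacle is the \emph{necessity} argument: turning the qualitative statement ``$\Omega\notin\cC_{N-k+1}$'' into the usable geometric fact that $\pl\Omega$ contains a genuinely $k$-dimensional flat portion at some boundary point, and then making the sliding-paraboloid/viscosity touching argument rigorous with the correct bookkeeping of which eigenvalues $\Ppk$ selects. One must be careful that ``no supporting cylinder in $C_{N-k+1}$'' is used through its contrapositive characterization of the contact set of supporting hyperplanes (a Straszewicz/exposed-face type argument for convex bodies), and that the test function is admissible (smooth, touching from below inside $\Omega$, not merely on the boundary). On the sufficiency side the only subtlety is the up-to-the-boundary continuity of the Perron solution, which is handled precisely by the two-sided barriers; the barrier below is trivial (use $-w$ with $f$ replaced suitably, or simply $0$ as a subsolution if $f\le 0$, and in general a large negative constant plus the same paraboloid construction applied to $\Pmk$, noting $\Pmk(D^2(-w))=-\Ppk(D^2 w)$).
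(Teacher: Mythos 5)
Your high-level architecture matches the paper's: reduce sufficiency to a barrier/Perron construction for domains in $\cC_{N-k+1}$, reduce necessity to the existence of a $k$-dimensional flat piece of $\pl\Omega$ (via the flatness characterization of $\cC_j$, which is the paper's Theorem~\ref{thm-Cj}), and then run a touching argument on that flat patch. However, each half of your argument has a substantive gap.

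\textbf{Sufficiency.} You assume that a bounded strictly convex $\omega\subset\R^{N-k+1}$ admits a \emph{smooth} defining function $\psi$ with $D^2\psi\geq cI>0$. This fails in general: take $\omega$ whose boundary near the origin is the graph $y=x_1^4$. This is strictly convex, but the principal curvature vanishes at the origin, so no $C^2$ function $\psi$ vanishing on $\pl\omega$ near the origin can have $D^2\psi\geq cI>0$ there (otherwise the boundary curve would be uniformly convex at $0$, which it is not). Your construction quietly upgrades strict convexity to \emph{uniform} convexity, which is precisely the already-known case from \cite{BGI}. The actual content of the paper is the construction of $\psi$ as the infimum of the family of concave paraboloids $\frac12(R^2-|x-x_0|^2)$ that are positive on $\omega$ (see \eqref{supersolution}); this $\psi$ is concave, satisfies $\Ppo(D^2\psi)\leq -1$ by stability of supersolutions under infima, and the nontrivial point (Theorem~\ref{psi}) is that strict convexity alone forces $\psi=0$ on $\pl\omega$. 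That step, proved by a compactness argument on outward normals, is the real engine of the existence half, and it is missing from your argument.

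\textbf{Necessity.} Your sign conventions do not line up. You pick $f\equiv 1$, regard a would-be solution $u$ as a subsolution, and then perform a \emph{touching-from-below} argument. But touching from below tests the \emph{supersolution} property and yields an upper bound $\Ppk(D^2\varphi)\leq f$ at the contact point, which is not a contradiction. Moreover, you propose a test function that is strongly concave \emph{in the flat directions}; then $\Ppk(D^2\varphi)$ is governed by the $N-k$ transverse eigenvalues, which you have not controlled, and no contradiction emerges. The paper's Proposition~\ref{non existence} gets this exactly right by reversing both choices: take $f=-1$, observe that any supersolution $u$ with $u\to 0$ at the flat point must be strictly positive in $\Omega$, and use the concave test function $\varphi(x)=-\alpha\sum_{i\leq k}x_i^2-\beta\bigl[\sum_{i>k}(x_i-z_i)^2-\varepsilon^2\bigr]$ with $\alpha<\beta$. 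Its Hessian has the top $k$ eigenvalues equal to $-2\alpha$, so $\Ppk(D^2\varphi)=-2k\alpha$, and $\alpha$ is tuned (via $\alpha=8u(z)/\delta^2$ with $u(z)$ small) so that $-2k\alpha>-1$; touching from below then contradicts $\Ppk(D^2\varphi(\xi))\leq -1$. Note also that the paper proves the stronger statement that no supersolution continuous up to the flat point exists, not merely no solution.

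In short, the geometric reduction to a flat patch and the idea of exploiting the eigenvalue selection of $\Ppk$ are on target, but the barrier construction for non-uniformly strictly convex $\omega$ and the sign/concavity bookkeeping in the obstruction argument both need to be replaced by the paper's versions.
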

Hence we have a sort of optimal condition for existence. In fact we have better, in the sense that
we prove nonexistence of {\em supersolutions} when the domain is not in ${\mathcal C}_{N-k+1}$.
For the part concerning existence, the construction of supersolutions is given in a constructive and elegant way.
When $k=1$, i.e. when the domain is strictly convex, this result will lead to the construction of the so called eigenfunction corresponding 
to the principal demi-eigenvalue, so generalizing the existence of eigenfunctions provided in \cite{BGI} under the  uniform convexity assumption.\\  
As mentioned above if the forcing term $f$ is positive or at least not too negative near the boundary, 
solutions of \eqref{Pbk} exists as soon as $\Omega$ is convex, strict convexity in order to allow $f$ to be 
negative at the boundary. So the real question is to obtain existence e.g. for $f\equiv -1$.

Interestingly, the presence of the first order term changes dramatically the dependence of the existence of 
solutions on the convexity of the domain. In fact it worsens the situation in the sense that \lq\lq strict convexity\rq\rq\
in general is not enough for existence in the presence of the first order term.
In fact the problem can be of  \lq\lq local\rq\rq\ type, i.e. if there is a point $P$ of the boundary where the principal curvatures are zero, even if the domain is strictly convex, then, for $b>0$ there are no positive supersolutions of
\begin{equation}\label{001}\Ppk(D^2u)+b|D u|=-1\end{equation}
which are zero at that point $P$, see Theorem \ref{bb}.

Or the problem can be of a global nature, i.e. if $\Omega$ is too large, independently of its shape, there are no solutions. More
precisely, if $B_R\subset\Omega$ and $bR\geq k$
there are no supersolutions of \eqref{001}.
Other cases with nonconstant $b$ are also considered in Section \ref{radial weight}.

Due to the relevance of the condition ${\mathcal C}_{j}$, we now give a characterization in term of flatness of the 
boundary,  which will play a role in the proof of Theorem 1.

Given a bounded convex domain $\Omega$ and $x\in\pl\Omega$, we consider the maximal dimension $d_x(\Omega)$ of  linear subspaces $V$ of the tangent space 
of $\pl\Omega$ at $x$ such that 
$(x+V)\cap \pl\Omega$ is a neighborhood of $x$ in the relative topology of $x+V$.  
That is, $d_x(\Omega)$ is the maximum of $m\in\{0,1,\ldots,N-1\}$ such that 
there exist an $m$-dimensional linear subspace $V$ in $\R^N$ and $\delta>0$ 
such that $x+V\cap B_\delta \subset\pl\Omega$. 
We set $d(\Omega)=\max_{x\in\pl\Omega}d_x(\Omega)$. 

\begin{theorem}\label{thm-Cj} Let $\Omega$ be a bounded convex domain. We have 
$\Omega\in\cC_j$ if and only if $d(\Omega)\leq N-j$. 
\end{theorem}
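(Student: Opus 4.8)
The plan is to prove the two implications separately, using the geometric characterization of $d(\Omega)$ in terms of maximal flat pieces through boundary points.

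First I would prove that $\Omega\in\cC_j$ implies $d(\Omega)\leq N-j$. Fix $x\in\pl\Omega$. By definition of $\cC_j$, there exist $O\in\cO^N$ and $C=\omega\times\R^{N-j}$ with $\omega\subset\R^j$ bounded and strictly convex, such that $\Omega\subset OC$ and $x\in\pl(OC)=O\pl C=O(\pl\omega\times\R^{N-j})$. Thus $O^{-1}x=(y,z)$ with $y\in\pl\omega$. Suppose $V$ is an $m$-dimensional subspace and $\delta>0$ with $(x+V)\cap B_\delta\subset\pl\Omega\subset\pl(OC)$. Applying $O^{-1}$, the set $(O^{-1}x+O^{-1}V)\cap B_\delta$ lies in $\pl\omega\times\R^{N-j}$. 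Projecting onto $\R^j$, a neighborhood of $y$ inside $y+\pi_j(O^{-1}V)$ lies in $\pl\omega$; since $\omega$ is strictly convex, $\pl\omega$ contains no line segment, so $\pi_j(O^{-1}V)=\{0\}$, i.e. $O^{-1}V\subset\{0\}\times\R^{N-j}$. Hence $m=\dim V\leq N-j$, giving $d_x(\Omega)\leq N-j$ and therefore $d(\Omega)\leq N-j$.

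For the converse, assume $d(\Omega)\leq N-j$ and fix $x\in\pl\Omega$. I want to produce $O\in\cO^N$ and $C\in C_j$ with $\Omega\subset OC$ and $x\in\pl(OC)$. The idea is to take a supporting hyperplane structure at $x$: let $W$ be the set of directions $v$ in the tangent cone at $x$ such that a whole segment $[x,x+\e v]\subset\pl\Omega$; by convexity this set spans a linear subspace $V_x$, and $V_x\cap\pl\Omega$ contains a relative neighborhood of $x$, so $\dim V_x=d_x(\Omega)\le N-j$. Choose an $(N-j)$-dimensional subspace $V\supseteq V_x$ (enlarging arbitrarily), let $E=V^\perp$ (an $j$-dimensional subspace), and let $\pi_E\colon\R^N\to E$ be the orthogonal projection. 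Set $\omega_0=\pi_E(\ol\Omega)\subset E\cong\R^j$, which is convex, bounded, with $\pi_E(x)\in\pl\omega_0$. The key claim is that, after possibly shrinking near $x$ — actually one must be more careful here — $\omega_0$ is strictly convex at $\pi_E(x)$, and then one slightly enlarges $\omega_0$ away from $\pi_E(x)$ to a genuinely strictly convex bounded set $\omega$ still containing $\pi_E(\ol\Omega)$ with $\pi_E(x)\in\pl\omega$; then $C=\omega\times V$ (identified via an orthogonal $O$ sending $\R^j\times\R^{N-j}$ to $E\times V$) satisfies $\Omega\subset\pi_E^{-1}(\omega)=OC$ and $x\in\pl(OC)$.

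The main obstacle is this last construction: showing that one can choose $V\supseteq V_x$ of dimension exactly $N-j$ so that $\pi_E(\ol\Omega)$ is strictly convex \emph{at the single point} $\pi_E(x)$ — equivalently, that no segment of $\pl\omega_0$ through $\pi_E(x)$ survives — and then globally replacing $\omega_0$ by a strictly convex $\omega$. The subtlety is that flatness of $\pl\Omega$ at $x$ in directions \emph{not} contained in $V$ could still create a flat segment in the projection $\omega_0$; one must argue that if a segment through $\pi_E(x)$ lay in $\pl\omega_0$, its preimage direction would be a flat direction of $\pl\Omega$ at $x$ lying in $V_x$, hence already inside $V$, contradiction — so in fact $\omega_0$ is automatically strictly convex at $\pi_E(x)$ once $V\supseteq V_x$. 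The final "rounding" step, enlarging $\omega_0$ to a strictly convex $\omega$ pinned at $\pi_E(x)$, is standard (e.g.\ intersect $\omega_0$ with a large ball internally tangent at $\pi_E(x)$, or add a small strictly convex perturbation supported away from that point), and I would only sketch it.
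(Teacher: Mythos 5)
Your forward direction is essentially the paper's argument, up to one inaccuracy: you write $\pl\Omega\subset\pl(OC)$, which need not hold (only $\pl\Omega\subset\ol{OC}$ is true, since $\Omega\subset OC$; picture a small ball inscribed in a large cylinder, touching it at a single point). What saves the argument is that $x\in\pl(OC)$, so $O^Tx$ projects to a point of $\pl\omega$, while both endpoints of any short segment in $(x+V)\cap B_\delta$ project into $\ol\omega$; if the projected segment were nondegenerate, strict convexity of $\omega$ would force its midpoint, which is the projection of $O^Tx$, into $\omega$, a contradiction. Phrased this way your first half is correct and matches the paper.

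The backward direction, however, has genuine gaps. First, your object $V_x$, the span of all directions $v$ with $[x,x+\e v]\subset\pl\Omega$, can strictly exceed the dimension of any flat piece through $x$: for $\Omega=(0,1)^N$ and $x$ a vertex, all $N$ edge directions lie in $W$, so $V_x=\R^N$, while $d_x(\Omega)=0$. Hence your claim that $(x+V_x)\cap B_\delta\subset\pl\Omega$ and $\dim V_x=d_x(\Omega)\le N-j$ fails, and choosing $V\supseteq V_x$ of dimension $N-j$ may be impossible. The paper's remedy is to fix a normal $\nu\in N_\Omega(x)$ and take the contact set $A=\ol\Omega\cap\{y:\nu\cdot(y-x)\ge 0\}$, a compact convex subset of the supporting hyperplane; its linear span $V_0$ satisfies $\dim V_0\le d(\Omega)$ (the \emph{global}, not local, flatness), because the barycenter $a$ of a spanning simplex in $A$ has $a+V_0\cap B_\delta\subset\pl\Omega$, i.e.\ the flat piece sits around $a$, not necessarily around $x$ (Lemma \ref{relative int}). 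One then enlarges $V_0$ to $V$ of dimension $N-j$ \emph{inside the supporting hyperplane} $\{v:\nu\cdot v=0\}$; your proposal never imposes this, and without it the projection $\pi_E(\ol\Omega)$ need not meet its supporting line at $\pi_E(x)$ in exactly one point. Second, the ``rounding'' step is not routine, and your suggested fix fails: intersecting $\omega_0$ with one internally tangent ball leaves the flat facets of $\omega_0$ intact away from the tangency point. The paper devotes Lemma \ref{const-omega} to this, taking $\omega$ to be the interior of $\bigcap_{R\ge R_0}\ol B_R(-\rho(R)e_m)$ with $\rho(R)$ maximal subject to containing $K$, and proving strict convexity of that intersection together with $K\subset\ol\omega$ and $\omega\subset\{x_m<0\}$ requires a nontrivial argument hinging on the limit $R^2-\rho(R)^2\to 0$. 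Your final heuristic, that a flat segment of $\pl\omega_0$ through $\pi_E(x)$ would lift to a flat direction of $\pl\Omega$ at $x$, is also unsubstantiated: the segment endpoints have $\pi_E$-preimages in $\ol\Omega$ that need not be anywhere near $x$.
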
 

Finally we wish to somehow compare our results with some results of Blanc and Rossi.
In \cite{BR} they consider the problem
$$\left\{\begin{array}{cc}
\lambda_j(D^2u)=0 &\mbox{in}\ \Omega\\
u=g &\mbox{on}\ \partial\Omega
\end{array}\right.
$$
and they prove that 
if $\Omega\in{\mathcal G}_j\cap {\mathcal G}_{N-j}$ then the above Dirichlet problem is solvable for any $g$
while, if $\Omega$ is not in ${\mathcal G}_j\cap {\mathcal G}_{N-j}$ 
then there 
should be some $g$ for which the 
problem is not solvable. The precise definition of ${\mathcal G}_j$ is 
recalled 
in the last section. Let us mention that these
operators, as well as the truncated Laplacians treated here, are fully nonlinear operators and hence it is not possible
to pass immediately from a Dirichlet problem with homogeneous boundary data to a Dirichlet problem with homogeneous forcing term. Nonetheless it is clear that both problems are related.

The definition of these ${\mathcal G}_j$ domains is different from the way we describe the \lq\lq strict convexity\rq\rq\ of 
 our domains. In the sense that we use domains that are  intersection of rotations and translations of \lq\lq($N-j+1$)-dimensional cylinders\rq\rq\ in ${\mathcal C}_{N-j+1}$.

In fact these notions are in general different since ${\mathcal G}_j\cap {\mathcal G}_{N-j}$ contains domain
that may not even be convex. On the other hand, if the domain is convex then the two notions are equivalent as it 
is proved in the last section together with the proof of Theorem \ref{thm-Cj}.

\section{Dirichlet problem} \label{existence}
\subsection{Nonexistence}
We begin by proving that convexity alone is not enough to solve Dirichlet problems for $\Ppk$ 
even for very regular forcing term.
\begin{proposition}\label{non existence}
Let $\Omega\subset \mathbb R^N$ be a convex domain and assume that up to a rigid 
motion there exists  $\delta>0$ such that the $k$-dimensional ball
\begin{equation}\label{flat}
B_{k,\delta}=\left\{x=(x_1,\ldots,x_k,0,\ldots,0)\in\mathbb R^N\;:\;|x|<\delta\right\}\subset
\partial\Omega.\end{equation}
Then there are no supersolutions $u\in \operatorname{LSC}(\overline\Omega)$ of
\begin{equation}\label{Pb13}
\left\{\begin{array}{cl}
\Ppk(D^2u)=-1 & \text{in $\Omega$}\\
u=0 & \text{on $\partial\Omega$}
\end{array}\right.
\end{equation}
such that 
\begin {equation}\label{H3}
\lim_{x\to0}u(x)=0.
\end{equation}
\end{proposition}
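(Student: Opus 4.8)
The plan is to argue by contradiction, exploiting two facts: that a supersolution of $\Ppk(D^2u)=-1$ restricts, on any $k$-dimensional affine plane, to a supersolution of the Laplace equation $\Delta v=-1$ on that plane; and that just above the flat boundary disk $B_{k,\delta}$ there are such $k$-planes meeting $\Omega$ in convex pieces of size bounded below, on which a $-1$ right-hand side forces $u$ to stay above a fixed positive barrier — contradicting \eqref{H3}.

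First I would normalize the picture. After the given rigid motion, at $0\in\pl\Omega$ pick a supporting hyperplane $H$; since $\pm s e_i\in\ol\Omega$ for $|s|<\delta$ and $i\le k$, $H$ must contain $\Span(e_1,\dots,e_k)$, so an orthogonal change of the last $N-k$ coordinates (which fixes $B_{k,\delta}$) puts us in the situation $\Omega\subset\{x_N>0\}$ with $H=\{x_N=0\}$. Writing $x=(x',x'')\in\R^k\tim\R^{N-k}$, and using that $\Ppk(X)$ is the maximum of $\sum_{j=1}^k\lan v_j,Xv_j\ran$ over orthonormal $k$-frames, hence $\Ppk(X)\ge\sum_{i=1}^k X_{ii}$, one sees that a viscosity supersolution $u$ of $\Ppk(D^2u)=-1$ in $\Omega$ is a viscosity supersolution of $\Delta_{x'}u:=\sum_{i=1}^k\pl_{ii}u=-1$ in $\Omega$. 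The key lemma is then: for each $c\in\R^{N-k}$ the restriction $v_c:=u(\cdot,c)$ to $\omega_c:=\{z\in\R^k:(z,c)\in\Omega\}$ is a viscosity supersolution of $\Delta v_c=-1$ in the bounded convex open set $\omega_c\subset\R^k$. This I would prove by the standard penalization argument: if $\varphi\in C^2$ touches $v_c$ from below at $z_0$ (strictly, after subtracting $|z-z_0|^4$), then for small $\e>0$ the LSC function $(z,y)\mapsto u(z,y)-\varphi(z)+\e^{-1}|y-c|^2$ has an interior minimum at some $(z_\e,y_\e)$, and one checks $(z_\e,y_\e)\to(z_0,c)$ using boundedness below of $u$; since $\varphi(z)-\e^{-1}|y-c|^2$ is then a lower test function for $u$ in $\Omega$, the supersolution property gives $\Delta\varphi(z_\e)\le-1$, and $\e\to0$ yields $\Delta\varphi(z_0)\le-1$. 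Finally, $v_c$ is LSC on $\ol{\omega_c}$ and, since $\pl\omega_c\subset\pl\Omega$, satisfies $v_c\ge0$ on $\pl\omega_c$.

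Now fix an interior point $p=(p',p'')\in\Omega$; since $\Omega\subset\{x_N>0\}$ we have $p''\neq0$. For $|z|<\delta$ one has $(z,0)\in\pl\Omega\subset\ol\Omega$, so $(1-t)(z,0)+tp=((1-t)z+tp',\,tp'')\in\Omega$ for $t\in(0,1)$; letting $z$ run over $B_{k,\delta}$ and taking $t$ small enough shows $\omega_{c_t}\supset B_{k,\delta/2}$, the $\R^k$-ball centred at the origin, where $c_t:=tp''\to0$. On $\omega_{c_t}$, $v_{c_t}$ is superharmonic (a supersolution of $\Delta v\le0$) and $\ge0$ on $\pl\omega_{c_t}\subset\pl\Omega$, hence $v_{c_t}\ge0$ on $\ol{\omega_{c_t}}$ by the minimum principle; comparing on $B_{k,\delta/2}$ with the explicit solution $\psi(z)=\frac{(\delta/2)^2-|z|^2}{2k}$ of $\Delta\psi=-1$, $\psi|_{\pl B_{k,\delta/2}}=0$ (so that $v_{c_t}\ge0=\psi$ on $\pl B_{k,\delta/2}$), the comparison principle for $\Delta v=-1$ gives $v_{c_t}\ge\psi$ on $B_{k,\delta/2}$, whence $u(0,c_t)=v_{c_t}(0)\ge\psi(0)=\delta^2/(8k)>0$. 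Since $(0,c_t)\to0$ as $t\to0^+$, this contradicts \eqref{H3}, and the proof is complete.

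The main technical obstacle is the restriction lemma — passing from the fully nonlinear operator $\Ppk$ on $\R^N$ to the Laplacian on a $k$-plane; everything else reduces to elementary convex geometry and the classical maximum and comparison principles for $\Delta v=-1$. One should also be slightly careful in the normalization step to justify that the supporting hyperplane at $0$ genuinely contains the flat disk $B_{k,\delta}$.
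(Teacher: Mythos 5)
Your proposal is correct, and it takes a genuinely different route from the paper's proof. The paper argues in one shot: using the smallness of $u$ near $0$ (from \eqref{H3}) it builds an explicit cylinder $C\subset\Omega$ over the disk $B_{k,\delta/2}$ together with a single ``anisotropic paraboloid'' test function $\varphi(x)=-\alpha\sum_{i\le k}x_i^2-\beta\bigl[\sum_{i>k}(x_i-z_i)^2-\varepsilon^2\bigr]$, chooses $\alpha,\beta$ proportional to $u(z)$ so that $\alpha<\beta$ and $\Ppk(D^2\varphi)=-2\alpha k>-1$, and derives a contradiction from an interior minimum of $u-\varphi$ in $C$. You instead isolate a restriction lemma — that a viscosity supersolution of $\Ppk(D^2u)=-1$ restricts on every $k$-dimensional affine slice $\{x''=c\}$ to a supersolution of $\Delta v=-1$ — and then invoke the classical comparison and minimum principles for the Laplacian on $\omega_{c_t}\supset B_{k,\delta/2}$, with the fixed barrier $\psi(z)=\tfrac{(\delta/2)^2-|z|^2}{2k}$, to conclude $u(0,c_t)\ge\delta^2/(8k)$ for all small $t$, contradicting \eqref{H3}. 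The two proofs are morally dual: the paper uses smallness of $u$ to build a small test function, while you use comparison to force $u$ to be bounded away from zero. Your argument is more modular and conceptually transparent (the restriction lemma is reusable, and the slicing picture explains geometrically where the obstruction to existence comes from), at the cost of the extra penalization lemma; the paper's is more self-contained and avoids any discussion of restrictions of viscosity supersolutions. One small observation: you do not actually need the intermediate step of passing to the degenerate operator $\sum_{i\le k}\partial_{ii}$; in the penalization argument the test function $\varphi(z)-|z-z_0|^4-\e^{-1}|y-c|^2$ has a block-diagonal Hessian whose last $N-k$ eigenvalues are $-2\e^{-1}$, so for $\e$ small $\Ppk$ applied to it equals $\Delta\varphi(z_\e)-(4k+8)|z_\e-z_0|^2$ directly, and you can apply the $\Ppk$-supersolution property of $u$ without the intermediate reduction.
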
 

We note that condition \erf{flat} implies that $d_0(\Omega)\geq k$. We recall that, for $x\in\partial\Omega$, $d_x(\Omega)$ is defined by 
\begin{equation}
\begin{split}
d_x(\Omega)&=\max\left\{m\in\left\{1,\ldots,N-1\right\}\,:\,\exists V\;\text{$m$-dimensional linear subspace on $\RN$}\right.\\
& \hspace{2cm}\left.\text{and $\delta>0$ s.t. $x+V\cap B_\delta\subset\partial\Omega$}\right\}.
\end{split}
\end{equation}

\begin{proof} 
Let us suppose by contradiction that there exists a supersolution $u$ of \eqref{Pb13} satisfying \eqref{H3}. It cannot achieve the  minimum at an interior point $x$, since otherwise we would have $\Ppk(D^2u(x))\geq0$. Hence $u$ is positive in $\Omega$.
In view of \eqref{H3}, there exists a positive number $r$ smaller than $\delta$ such that
\begin{equation}\label{equ3+11}
u(x)<\frac{\delta^2}{16k}\quad\text{ for any }x\in B_r\cap\Omega.  
\end{equation}

{\bf Claim:} There exists a point $z\in\Omega$ and $\varepsilon<\frac{\delta}{2}$ such that 
$z\in \{0\}\tim\R^{N-k}\subset\R^N$, $|z|<r$ and the cylinder 
$$
C=\left\{x\in\RN\;:\;\sum_{i=1}^kx_i^2<\frac{\delta^2}{4},\;
\sum_{i=k+1}^{N}(x_i-z_i)^2<\varepsilon^2\right\}\subset\Omega.$$

We suppose that the claim is proved and we go on with the rest of the proof.

Since $z\in B_r$, \eqref{equ3+11} yields   
\begin{equation}\label{equ3}
u(z)<\frac{\delta^2}{16k}. 
\end{equation}
Let \begin{equation}\label{phi}
\varphi(x)=-\alpha\sum_{i=1}^kx_i^2-\beta\left[\sum_{i=k+1}^{N}(x_i-z_i)^2-\varepsilon^2\right],
\end{equation}
where 
\begin{equation}\label{equ1}
\alpha=\frac{8u(z)}{\delta^2},\quad\beta=\frac{2u(z)}{\varepsilon^2}.
\end{equation}

We claim that $\displaystyle\min_{\overline C}(u-\varphi)$ is attained at some point $\xi\in C$.\\ Let $x\in\partial C$. If $\sum_{i=1}^kx_i^2=\frac{\delta^2}{4}$ then
$$
u(x)-\varphi(x)\geq-\varphi(x)\geq\alpha\frac{\delta^2}{4}-\beta\varepsilon^2=0
$$
in view of \eqref{equ1}.\\ Otherwise $\sum_{i=k+1}^{N}(x_i-z_i)^2=\varepsilon^2$ and 
$$
u(x)-\varphi(x)\geq-\varphi(x)=\alpha\sum_{i=1}^kx_i^2\geq0.
$$
Since
$$
u(z)-\varphi(z)=u(z)-\beta\varepsilon^2<0,
$$
then necessarily $u-\varphi$ has a minimum at an interior point, say $\xi\in C$, and 
\begin{equation}\label{equ2}
\Ppk(D^2\varphi(\xi))\leq-1.
\end{equation}
On the other hand $$D^2\varphi={\rm diag}(\underbrace{-2\beta,\ldots,-2\beta}_{N-k\;\text{times}},\underbrace{-2\alpha,\ldots,-2\alpha}_{k\;\text{times}})$$
with $\alpha<\beta$. Then using \eqref{equ3} and \eqref{equ1} one has 
$$
\Ppk(D^2\varphi(\xi))=-2\alpha k>-1
$$
in contradiction to \eqref{equ2}.

We now give the proof of the claim.
Since the origin is on $\pl\Omega$, we may choose a $y\in\Omega$ so that
$|y|<r$.  
Set 
\[y^{(1)}=(y_1,\ldots,y_k,0,\ldots,0), \quad y^{(2)}=(0,\ldots,0,y_{k+1},\ldots,y_N)\in\R^N.
\]
By assumption \eqref{flat} $-y^{(1)}\in B_{k,\delta}$ and  using the convexity of $\Omega$  

\[
\frac 12 y^{(2)}=\frac 12 y -\frac 12 y^{(1)}\in \frac 12 \Omega+\frac 12\pl\Omega\subset \Omega. 
\]
Set $2z=\frac 12 y^{(2)}$ and note that $2z\in \{0\}\tim\R^{N-k}\subset\R^N$.  
Select a positive constant $\varepsilon<\frac\delta2$ so that $B_{2\varepsilon}(2z)\subset \Omega$ and note 
that 
\[
B_{k,\frac \delta 2}+B_{\varepsilon }(z)= \frac 12 B_{k,\delta}+\frac 12 B_{2\varepsilon}(2z)\subset \Omega.
\]
Then we have the inclusion for the cylinder 
$$
C=\left\{x\in\RN\;:\;\sum_{i=1}^kx_i^2<\frac{\delta^2}{4},\;\sum_{i=k+1}^{N}
(x_i-z_i)^2<\varepsilon^2\right\}\subset\Omega.
$$
\end{proof}

\subsection{Existence}
In order to solve the Dirichlet problem with general right hand side $f$ we should impose that $\partial\Omega$ has at least $N-k$ directions of strict convexity, as anticipated in the Introduction, see \eqref{j strict convex}-\eqref{j strict convex 1}.

\begin{theorem}\label{Dirichlet1}
Let $\Omega\in {\mathcal C}_{N-k+1}$  and let $f\in {\rm C}(\Omega)$ be bounded. Then the Dirichlet problem
\begin{equation}\label{Pbk}
\left\{\begin{array}{cl}
\Ppk(D^2u)=f(x) & \text{in $\Omega$}\\
u=0 & \text{on $\partial\Omega$}
\end{array}\right.
\end{equation}
has a unique solution.
\end{theorem}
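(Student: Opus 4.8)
The plan is to get uniqueness from the comparison principle and existence from Perron's method, the hypothesis $\Omega\in\mathcal C_{N-k+1}$ being used only to manufacture an upper barrier at each boundary point. Throughout set $M:=\sup_\Omega|f|$, so $-M\le f\le M$ in $\Omega$. Uniqueness is immediate: $\Ppk$ is degenerate elliptic and the comparison principle for the Dirichlet problem $\Ppk(D^2u)=f$ on a bounded domain is available (no zeroth order term is needed, since $\Omega$ is bounded and $\Ppk$ dominates the extremal operator $\mathcal M^-_{1,k}$), so two solutions of \eqref{Pbk} must coincide; see \cite{CLN1,BGI}.

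For existence I would first construct, at each $x_0\in\partial\Omega$, a smooth lower and a smooth upper barrier. The lower barrier uses only the convexity of $\Omega$: if $\nu$ is an outward unit normal to a supporting hyperplane of $\Omega$ at $x_0$, put
\[
\underline w_{x_0}(x)=A\,(x-x_0)\cdot\nu+\tfrac M2\big((x-x_0)\cdot\nu\big)^2 .
\]
Since $D^2\underline w_{x_0}=M\,\nu\otimes\nu$ has eigenvalues $M$ (simple) and $0$, we get $\Ppk(D^2\underline w_{x_0})=M\ge f$; and choosing $A>\tfrac M2\operatorname{diam}\Omega$ one checks $\underline w_{x_0}\le 0$ on $\overline\Omega$ with $\underline w_{x_0}(x_0)=0$. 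The upper barrier is where the geometry enters. Pick $(O,C)$ realizing \eqref{j strict convex 1} at $x_0$, with $C=\omega\times\R^{k-1}$ and $\omega\subset\R^{N-k+1}$ bounded and strictly convex; using the strict convexity of $\omega$ one squeezes $\omega$ between the supporting hyperplane at the projection $\xi_0$ of $O^{-1}x_0$ and a circumscribed ball touching $\partial\omega$ there, which produces a solid round cylinder $Z=\{x:\dist(x,L)<\rho\}$, with $L$ a $(k-1)$-dimensional affine subspace, such that $\Omega\subset Z$ and $x_0\in\partial Z$. Set
\[
\bar w_{x_0}(x)=\tfrac M2\big(\rho^2-\dist(x,L)^2\big).
\]
Then $D^2\bar w_{x_0}=-M\,P_{L^\perp}$, whose eigenvalues are $-M$ (multiplicity $N-k+1$) and $0$ (multiplicity $k-1$); hence the $k$ largest eigenvalues are the $k-1$ zeros together with one $-M$, so $\Ppk(D^2\bar w_{x_0})=-M\le f$, while $\bar w_{x_0}\ge 0$ on $\overline Z\supset\overline\Omega$ and $\bar w_{x_0}(x_0)=0$.

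Now glue the barriers: $\underline u:=\big(\sup_{x_0\in\partial\Omega}\underline w_{x_0}\big)^{*}$ is a subsolution and $\bar u:=\big(\inf_{x_0\in\partial\Omega}\bar w_{x_0}\big)_{*}$ a supersolution of $\Ppk(D^2u)=f$ in $\Omega$ (stability of viscosity sub/supersolutions under sup and inf), and from the pointwise inequalities above $\underline u\le 0\le\bar u$ on $\overline\Omega$ with $\underline u=\bar u=0$ on $\partial\Omega$. Perron's method applied to this ordered pair produces $u:=\sup\{v:\ v\ \text{subsolution},\ \underline u\le v\le\bar u\}$, whose upper and lower semicontinuous envelopes $u^{*},u_{*}$ are respectively a sub- and a supersolution with $u_{*}\le u\le u^{*}$ and, on $\partial\Omega$, $u^{*}\le\bar u=0=\underline u\le u_{*}$; the comparison principle then forces $u^{*}\le u_{*}$ in $\Omega$, so $u$ is continuous, solves the equation in $\Omega$, and vanishes on $\partial\Omega$. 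Uniqueness is again comparison.

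The step I expect to be the main obstacle is the construction of the upper barrier — more precisely, extracting from the abstract condition $\Omega\in\mathcal C_{N-k+1}$ the concrete round cylinder $Z$ of finite radius that contains $\Omega$ and is tangent to $\partial\Omega$ at $x_0$. This is exactly the point at which strict convexity of the cross-sections $\omega$ is indispensable, and it is exactly the step that must fail, by Proposition \ref{non existence}, as soon as $\partial\Omega$ carries a $k$-dimensional flat disc (the disc would force $\rho=\infty$, degenerating $Z$ into a half-space and $\Ppk(D^2\bar w_{x_0})$ into $0$). The remaining ingredients — the lower barrier, the stability of sub/supersolutions under sup and inf, Perron's method, and the comparison principle — are classical.
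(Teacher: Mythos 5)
Your overall architecture — comparison principle for uniqueness, Perron's method with a lower and an upper barrier at each boundary point — is the same as the paper's, and your lower barrier (the planar quadratic using only convexity) is a perfectly good alternative to the paper's inverse–power barrier built from the exterior sphere condition. The gap is in the upper barrier.

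You write that strict convexity of the cross-section $\omega$ lets one choose, for the given boundary point, a \emph{single} circumscribed ball touching $\partial\omega$ there, yielding a solid round cylinder $Z=\{\dist(x,L)<\rho\}$ with finite $\rho$, $\Omega\subset Z$, $x_0\in\pl Z$. This step does not follow from strict convexity: strict convexity excludes line segments in $\pl\omega$ but does not impose any curvature lower bound, so there need be no circumscribed ball of finite radius tangent at a prescribed boundary point. (Take $\omega$ to have a boundary arc like $y=x^4$ near the origin: any ball containing $\omega$ that touches $\pl\omega$ at the origin must have infinite radius.) If a finite $\rho$ existed for every $x_0$ with a uniform bound, $\Omega$ would satisfy the ``hula hoop'' condition \eqref{u-convex}, which is precisely the stronger hypothesis under which \cite{BGI} already solves the problem; the whole point of Theorem~\ref{Dirichlet1} is to relax uniform convexity to the class $\cC_{N-k+1}$. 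Your remark at the end that the construction ``must fail as soon as $\pl\Omega$ carries a $k$-dimensional flat disc'' is true but misdiagnoses the difficulty — the finite-$\rho$ construction fails much earlier, for strictly convex but not uniformly convex domains, which are exactly the domains the theorem is meant to cover.

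The paper circumvents this by \emph{not} choosing a single tangent ball. It defines $\psi$ in \eqref{supersolution} as the infimum over \emph{all} concave spherical caps positive on $\omega$ (equivalently, over all circumscribed balls of every radius); each term still gives $\Ppo(D^2\cdot)\le -1$, and the infimum is automatically a supersolution. The nontrivial content is then Theorem~\ref{supersolution1}: even though for a fixed boundary point $z_0$ every individual cap $f_R$ has $f_R(z_0)>0$, a careful geometric limiting argument (normals of tangent balls of radius $R\to\infty$ converging to the supporting normal at $z_0$, combined with the strict-convexity consequence that distinct boundary points have disjoint normal cones, Lemma~\ref{lemma strict convexity}) shows $\inf_R f_R(z_0)=0$ and hence $\psi\in{\rm C}(\ol\omega)$ with $\psi=0$ on $\pl\omega$. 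This is the step missing from your proposal. If you replace your single-cylinder barrier with the paper's infimum $\Psi_C(O^Tx)$ and supply a proof that it vanishes continuously on the boundary, the rest of your argument goes through.
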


\def\IN{\text{ in }}
Before discussing the Dirichlet problem \erf{Pbk}, for a basis of our discussion, 
we state a proposition concerning the comparison principle.

\begin{proposition}\label{g-comp} Let $\Omega\subset\R^N$ be a bounded domain 
and $b, f\in {\rm C}(\Omega)$. Let $F$ denote either $\cP_k^+$ of $\cP_k^-$. 
Let 
$v\in\USC(\ol\Omega)$ and $w\in\LSC(\ol\Omega)$ be a sub and supersolution 
of    
\begin{equation}\label{g-comp1}
F(D^2u)+b(x)|Du|=f(x) \ \ \IN \Omega
\eeq
and satisfy $v\leq w$ on $\pl\Omega$. Moreover, assume that either of
$b$, $v$ or $w$ is locally Lipschitz in $\Omega$. 
Then, under one of the following conditions, we have $v\leq w$ in $\Omega$. 
\begin{enumerate}
\item[\emph{(i)}]There exists a ball $B_R$ such that $\Omega\subset B_R$ 
and that $\|b\|_\infty R\leq k$.  
\item[\emph{(ii)}] $f>0$ in $\Omega$ or $f<0$ in $\Omega$. 
\end{enumerate}
\end{proposition}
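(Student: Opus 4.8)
The plan is to run the standard doubling-of-variables argument for viscosity solutions, adapted to exploit the particular structure of the truncated Laplacians. First I would reduce to a strict situation: in case (ii), say $f<0$, I would perturb the supersolution (or subsolution) by a small multiple of a smooth strict subsolution so that the inequality becomes strict; here the natural choice is a function like $\psi(x)=e^{-\mu x_1}$ or a large negative multiple of $|x|^2$ translated suitably, using that $\cP_k^\pm(D^2\psi)+b|D\psi|$ can be made strictly negative on $\ol\Omega$ when $f<0$ (and symmetrically when $f>0$). In case (i) one instead uses that $\|b\|_\infty R\le k$ together with a barrier of the form $\varphi(x)=C(e^{\sigma R}-e^{\sigma(x_1-x_1^0)})$ adapted to the enclosing ball $B_R$: since $\cP_k^+$ picks up the $k$ largest eigenvalues, the term $\cP_k^\pm(D^2\varphi)$ beats $b|D\varphi|$ precisely under the condition $\|b\|_\infty R\le k$. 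After this reduction I may assume the subsolution satisfies a \emph{strict} inequality and it suffices to prove $v\le w$ under that strict condition.

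Next I would suppose for contradiction that $\max_{\ol\Omega}(v-w)>0$, double variables by considering
\[
\Phi_\eta(x,y)=v(x)-w(y)-\frac{1}{2\eta}|x-y|^2
\]
and let $(x_\eta,y_\eta)$ be a maximum point. Standard arguments give $x_\eta,y_\eta\to \hat x\in\Omega$ (an interior point, by the boundary ordering and positivity of the max), $|x_\eta-y_\eta|^2/\eta\to 0$, and the Theorem on Sums produces symmetric matrices $X,Y$ with $X\le Y$, $p_\eta:=(x_\eta-y_\eta)/\eta$, and the viscosity inequalities $F(X)+b(x_\eta)|p_\eta|\le f(x_\eta)$ (strictly, after the reduction) and $F(Y)+b(y_\eta)|p_\eta|\ge f(y_\eta)$. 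Subtracting and using $X\le Y$ together with monotonicity of $F=\cP_k^\pm$ in the matrix ordering gives $F(X)\le F(Y)$, so the matrix terms cancel favourably and one is left with
\[
0< \big(f(x_\eta)-f(y_\eta)\big)+\big(b(y_\eta)-b(x_\eta)\big)|p_\eta| + \text{(strict gain)},
\]
and letting $\eta\to0$ the right side tends to a nonpositive quantity, contradiction. The role of the local Lipschitz hypothesis on $b$, $v$, or $w$ is exactly to control the term $(b(y_\eta)-b(x_\eta))|p_\eta|$: if $b$ is Lipschitz this is $O(|x_\eta-y_\eta|\cdot|p_\eta|)=O(|x_\eta-y_\eta|^2/\eta)\to 0$; if instead $v$ (resp. $w$) is locally Lipschitz, then $|p_\eta|$ stays bounded (since $v(x_\eta)-v(y_\eta)\ge \frac1{2\eta}|x_\eta-y_\eta|^2$ forces $|x_\eta-y_\eta|/\eta\le \Lip(v)$), and continuity of $b$ then kills the term.

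The main obstacle is this first-order term $b(x)|Du|$: unlike a Lipschitz-in-$p$ Hamiltonian with no $x$-dependence issues, here the product $|b(y_\eta)-b(x_\eta)|\,|p_\eta|$ need not vanish without one of the three stated regularity assumptions, so the proof genuinely bifurcates on which object is Lipschitz and requires the gradient-bound argument above in the two cases where $b$ itself is merely continuous. A secondary technical point is making the strictness reduction in case (i) compatible with the constraint $\|b\|_\infty R\le k$ — one must choose the barrier parameter $\sigma$ and the enclosing ball so that the strict subsolution property survives, which is where the sharp constant $k$ (the number of eigenvalues summed) enters, mirroring the $bR<k$ threshold from \cite{BGI}. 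Uniqueness in Theorem \ref{Dirichlet1} then follows by applying this comparison principle with $b\equiv0$, for which condition (i) is vacuously satisfied.
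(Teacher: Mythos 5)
Your overall framework — reduce to a strict sub-/supersolution inequality, then run a doubling-of-variables argument with the Theorem on Sums, with the local Lipschitz hypothesis on $b$, $v$ or $w$ used to kill the term $(b(x_\eta)-b(y_\eta))|p_\eta|$ — is the right skeleton and matches the paper's outline. The identification of why the Lipschitz hypothesis is needed, and the bifurcation between ``$b$ Lipschitz'' and ``$v$ or $w$ Lipschitz'' (the latter giving a gradient bound $|p_\eta|\le\Lip$), is correct and is exactly the point the paper makes in its first remark after the proof outline.

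However, the reduction step — the heart of the argument for the truncated Laplacians — is where your proposal fails, in both cases. For case (i) you propose an exponential barrier $\varphi(x)=C(e^{\sigma R}-e^{\sigma(x_1-x_1^0)})$ and claim ``since $\cP_k^+$ picks up the $k$ largest eigenvalues, $\cP_k^\pm(D^2\varphi)$ beats $b|D\varphi|$ precisely under $\|b\|_\infty R\le k$.'' This is false: $D^2\varphi$ has exactly one nonzero eigenvalue ($-C\sigma^2 e^{\sigma(\cdot)}<0$), so with $k<N$ we get $\cP_k^+(D^2\varphi)=0$ regardless of $\sigma$; the barrier contributes \emph{nothing} to the second-order term, and the threshold $k$ never appears. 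The sharp constant $k$ comes precisely from a perturbation of the form $\gamma|x|^2/2$, whose Hessian $\gamma I$ shifts \emph{all} $N$ eigenvalues, so that $\cP_k^+(X+\gamma I)=\cP_k^+(X)+\gamma k$ exactly; balancing against the first-order loss $\gamma\|b\|_\infty|x|$ is what produces the gain $\gamma(k-\|b\|_\infty|x|)>0$ under $\|b\|_\infty R\le k$. This is the paper's argument and it does not go through with a rank-one barrier.

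For case (ii) you propose an \emph{additive} perturbation $w\mapsto w+\lambda\psi$ (or $v\mapsto v-\lambda\psi$) with a barrier satisfying $\cP_k^\pm(D^2\psi)+b|D\psi|<0$, and you assert that such a $\psi$ exists ``when $f<0$.'' But the existence of such a barrier has nothing to do with the sign of $f$; it depends on $\|b\|_\infty$ and the size of $\Omega$, and in case (ii) there is no assumption at all relating $\|b\|_\infty$ to $\Omega$, so no such global additive barrier need exist (e.g.\ $\psi=-|x|^2/2$ gives $\cP_k^+(D^2\psi)+\|b\|_\infty|D\psi|=-k+\|b\|_\infty|x|$, which can be positive in a large domain). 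Moreover, because $\cP_k^\pm$ are not additive, $\cP_k^+(D^2w+\lambda D^2\psi)$ does not decompose in a way that makes the perturbation decisive. The correct device in case (ii) is \emph{multiplicative}: since $\cP_k^\pm(D^2\cdot)+b|D\cdot|$ is positively homogeneous of degree one, $(1+\gamma)v$ is a subsolution of the equation with right-hand side $(1+\gamma)f$, which strictly dominates $f$ when $f>0$ (and $(1-\gamma)v$ works when $f<0$). This is the paper's argument and it uses the definite sign of $f$ in an essential way. One minor point in your doubling step: for the subsolution $v$, the viscosity inequality at $x_\eta$ reads $F(X)+b(x_\eta)|p_\eta|\ge f(x_\eta)$, not $\le$; you wrote the inequality the wrong way around.
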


A comparison theorem under the condition (i) above (without equality) can be found in \cite[Proposition 4.1]{GV}, where it is also shown by a counterexample that the assumption $\|b\|_\infty R\leq k$ cannot be improved in general.

It should be noted that $\USC(X)$ (resp.,  $\LSC(X)$) denotes here the set 
of \emph{real-valued} upper (resp., lower ) semicontinuous functions on $X$.

\bproof[Outline of proof] We consider only the case $F=\cP^+_k$. Fix a small $\varepsilon>0$ 
and consider the function $v_\varepsilon=v-\varepsilon$, which is still a subsolution of \erf{g-comp1}. Since $v_\varepsilon<w$ on $\pl\Omega$ and $v_\varepsilon-w\in\USC(\ol\Omega)$, 
there exists $\delta\in(0,\,\varepsilon)$ so that for $\Omega_\delta=\{x\in\Omega\mid \dist(x,\pl\Omega)>\delta\}$, 
we have $v_\varepsilon<w$ on $\pl\Omega_\delta$.  Note that either $b$, $v_\varepsilon$ or $w$ is Lipschitz continuous in $\Omega_\delta$. 

The next step is to replace either 
$v_\varepsilon$ or $w$ by its small modification, which is, respectively , a strict subsolution 
or strict supersolution of \erf{g-comp1} in $\Omega_\delta$.  

Let $0<\gamma<1$ and first consider the case (i). 
By translation, we may assume that $\Omega_\delta\subset B_r$ for some $0<r<R$   
and consider the function $v_{\varepsilon,\gamma}(x):=v_\varepsilon(x)+\gamma|x|^2/2$ 
with $\gamma>0$. 
This function $v_{\varepsilon,\gamma}$ is a subsolution of 
\[
\cP_k^+(D^2u-\gamma I)+b(x)|Du-\gamma x|=f(x) \ \ \IN\Omega,
\]
where $I$ denotes the $N\tim N$ unit matrix. From this, it is easily seen
  $v_{\varepsilon,\gamma}$ is a subsolution of 
\[
\Ppk(D^2u)+b(x)|Du|=f(x) +\gamma(k-\|b\|_\infty |x|) \ \ \IN\Omega.
\]
Note that, since $\gamma(k-\|b\|_\infty|x|)>0$, 
$v_{\varepsilon,\gamma}$ is a strict subsolution of \erf{g-comp1} in $\Omega_\delta$ and that 
$v_{\varepsilon,\gamma}<w$ on $\pl\Omega_\delta$ for $\gamma$ sufficiently small. 

Next, consider the case (ii). If $f>0$ in $\Omega$, then, by the homogeneity of 
the operator $F(D^2\cdot)+b|\nabla \cdot |$, the function
$v_{\varepsilon,\gamma}=(1+\gamma)v_\varepsilon$ is a subsolution of 
\[
\Ppk(D^2u)+b|Du|=(1+\gamma)f \ \ \IN \Omega_{\delta},
\]
which means that $v_{\varepsilon,\gamma}$ is a strict subsolution of \erf{g-comp1} in $\Omega_\delta$. Similarly, if $f<0$, the function $v_{\varepsilon,\gamma}=(1-\gamma)v_\varepsilon$ is a strict 
subsolution of \erf{g-comp1} in $\Omega_\delta$.  We may take 
$\gamma>0$ small enough so that $v_{\varepsilon,\gamma}\leq w$ on $\pl\Omega_\delta$

We may now apply \cite[Theorem 3.3 and Sections 5.A, 5.C]{CIL}, to conclude that
$v_{\varepsilon,\gamma}\leq w$ in $\Omega_\delta$. Sending $\gamma\to 0$ first and then 
$\varepsilon\to 0$ complete the proof.  

Here are two remarks. 
For use of \cite[Section 5.A]{CIL}, we observe 
that, if we set $G(x,p,X)=-\Ppk(X)-b(x)|p|$ and two $N\tim N$ matrices satisfy 
\begin{equation}\label{matrix-ineq}
\bmat X&0\\ 0 &-Y\emat\leq 
3\alpha\bmat I&-I\\-I&I\emat \ \ \text{ for some }\alpha>0,
\eeq
then we have $X\leq Y$ and therefore 
\[\bald
G(y,\alpha(x-y),Y)-G(x,\alpha(x-y),X) &\,\leq G(y,\alpha(x-y),Y)-G(x,\alpha(x-y),Y)
\\&\,\leq \alpha |b(x)-b(y)||x-y| \ \ \text{ for } x,y\in\Omega_\delta.
\eald
\]
This shows that, taking limit under the condition that $X$ and $Y$ satisfy \erf{matrix-ineq} and $\alpha|x-y|\leq C$ for a fixed constant $C>0$, we have 
\[
\limsup_{|x-y| \to 0}\left[G(y,\alpha(x-y),Y)-G(x,\alpha(x-y),X)\right]\leq 0. 
\]
This observation is not enough for a direct application of \cite[Section 5.A]{CIL}, but, 
in fact, a slight modification of the argument in \cite[Section 5.A]{CIL}
yields $v_\varepsilon\leq w$ in $\Omega_\delta$ when either $v$ and $w$ is in $\Lip(\Omega_\delta)$. 

Secondly, it is not trivial to see in the case  of (ii) that if $\gamma>0$ is small enough, then $v_{\varepsilon,\gamma}\leq w$ on $\pl\Omega_{\delta}$.  In fact, since $v_\varepsilon, -w\in\USC(\ol\Omega_\delta)$, we infer that $\max_{\pl\Omega_\delta}(v_\varepsilon-w)<0$. 
Also, by the semicontinuity, there 
is a constant $M>0$ such that $v_\varepsilon, -w\leq M$ on $\pl\Omega_\delta$. 
For $x\in\pl\Omega_\delta$ and $\gamma>0$ sufficiently small, if $v_\varepsilon(x)\leq -2M$, then 
\[
v_{\varepsilon,\gamma}(x)=(1\pm \gamma)v_\varepsilon(x)\leq -2(1\pm\gamma)M\leq -M\leq w(x),
\]
and otherwise, we have $-2M<v_{\varepsilon}(x)\leq M$ and 
\[
v_{\varepsilon,\gamma}(x)\leq v_{\varepsilon}(x)+\gamma |v_\varepsilon(x)|< v_\varepsilon(x)+2\gamma M\leq w(x). 
\]
This way, one gets $v_{\varepsilon,\gamma}\leq w$ on $\pl\Omega_\delta$ for small $\gamma>0$. 
\eproof

The proof of Theorem \ref{Dirichlet1} is carried out by means of Perron method. It is worth pointing out that the 
standard procedure to construct  subsolutions which are null on $\partial\Omega$ (see e.g. \cite[Section 9]{Cprimer}) 
works for  $\Ppk$ which is in fact a sup operator. 
On the other hand it fails for supersolutions owing to the strong 
degeneracy of $\Ppk$ with respect to inf-type operations. The geometry of $\Omega$  plays here a crucial role.
  
Hence we will start by recalling a property concerning strict convex domains.
Let $\Omega$ be a convex domain of $\R^N$ and $z\in\pl\Omega$. 
The set $N(z)=N_\Omega(z)$ of outward normal unit vectors at $z$ 
is defined by
\[
N(z)=\{p\in\R^N\mid |p|=1,\ \  (x-z)\cdot p\leq 0 \ \text{ for all }x\in\Omega\}.
\]  
It is well-known (a consequence of the Hahn-Banach theorem) that $N(z)\not=\emptyset$ for every $z\in\pl\Omega$. 

\begin{definition} A domain $\Omega\subset\R^N$ is strictly convex if 
\[ 
(1-t)x+ty\in\Omega\ \ \ \text{ for all } \ 
x,y\in\ol\Omega, \text{ with } x\not=y, \ 0<t<1.
\]
\end{definition}

\begin{lemma}\label{lemma strict convexity}

 If a domain $\Omega$ is strictly convex, then 
\[
N(x)\cap N(y)=\emptyset \ \ \ \text{ for all }\ x,y\in\pl\Omega, \text{ with } x\not=y.
\]
\end{lemma}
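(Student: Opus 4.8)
The plan is to argue by contradiction. Suppose $\Omega$ is strictly convex and yet there exist distinct points $x,y\in\pl\Omega$ together with a unit vector $p\in N(x)\cap N(y)$. By definition of $N(x)$ we have $(w-x)\cdot p\le 0$ for all $w\in\Omega$, hence by continuity $(w-x)\cdot p\le 0$ for all $w\in\ol\Omega$; similarly $(w-y)\cdot p\le 0$ for all $w\in\ol\Omega$. Applying these with $w=y$ and $w=x$ respectively gives $(y-x)\cdot p\le 0$ and $(x-y)\cdot p\le 0$, so $(x-y)\cdot p=0$; in other words $x$ and $y$ lie on the same supporting hyperplane $\{w:(w-x)\cdot p=0\}$, which touches $\ol\Omega$ only from the side $\{(w-x)\cdot p\le 0\}$.

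Next I would take the midpoint $m=\tfrac12 x+\tfrac12 y$. Since $x,y\in\ol\Omega$ and $x\ne y$, the strict convexity hypothesis (with $t=\tfrac12$) gives $m\in\Omega$. On the other hand $(m-x)\cdot p=\tfrac12(y-x)\cdot p=0$, so $m$ lies on the supporting hyperplane. But $m\in\Omega$ and $p\in N(x)$ force $(m-x)\cdot p\le 0$ with, more importantly, the fact that an \emph{interior} point cannot sit on a supporting hyperplane: indeed, since $\Omega$ is open, a small ball $B_\rho(m)\subset\Omega$, and then the point $m+\tfrac{\rho}{2}p\in\Omega$ would satisfy $(m+\tfrac{\rho}{2}p-x)\cdot p=\tfrac{\rho}{2}>0$, contradicting $p\in N(x)$. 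This contradiction completes the proof.

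The argument is essentially elementary once one unpacks the definitions, so there is no serious obstacle; the only point requiring a little care is the passage from $\Omega$ to $\ol\Omega$ in the defining inequality for $N(x)$ (which is immediate by continuity of $w\mapsto (w-x)\cdot p$) and the observation that an interior point of a convex \emph{open} set cannot lie on any of its supporting hyperplanes. I would present the midpoint $m$ explicitly and note that it simultaneously lies in $\Omega$ (by strict convexity) and on the common supporting hyperplane determined by $p$ (by the computation $(m-x)\cdot p=0$), which is impossible.
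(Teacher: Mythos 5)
Your proof is correct and follows essentially the same route as the paper: both arguments place the midpoint $\tfrac{x+y}{2}$ in $\Omega$ via strict convexity and then perturb it by a small multiple of $p$ to contradict $p\in N(x)$. The only cosmetic difference is that you first extract $(x-y)\cdot p=0$ before invoking the midpoint, whereas the paper simply adds the two defining inequalities to obtain $(z-\tfrac{x+y}{2})\cdot p\le 0$ directly.
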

For the convenience of the reader, we give a proof.
\bproof Let $x,y\in\pl\Omega, \ x\not=y$. Suppose that there is $p\in N(x)\cap N(y)$. It follows that 
\[
(z-x)\cdot p\leq 0,\quad (z-y)\cdot p\leq 0 \ \ \ \forall z\in\ol\Omega.
\]
Adding these two yields
\begin{equation}\label{eq1lemma}
\left(z-\frac{x+y}{2}\right)\cdot p\leq 0 \ \ \forall z\in\ol\Omega. 
\end{equation}
Since $\Omega$ is strictly convex, we have 
\[
\frac{x+y}2\in\Omega,
\]
and, therefore, there exists $\delta>0$ such that 
\[
B_{\delta}\Big(\frac{x+y}{2}\Big)\subset \Omega,
\]
and, in particular, 
\[
z:=\frac{x+y}{2}+\delta p\in\ol\Omega,
\]
which shows that
\[
\left(z-\frac{x+y}{2}\right)\cdot p=\delta|p|^2=\delta>0,
\]
contradicting \eqref{eq1lemma}. 
\eproof

Let $\omega$ be a bounded strictly convex domain in $\R^j$.  
Let $\cF$ be the collection of functions
\[
f(x)=\frac{1}{2}(R^2-|x-x_0|^2),
\]
where $R>0$, $x,\,x_0\in\R^j$, and, moreover,
\[
f(x)> 0 \ \ \ \text{ on }\omega.
\]
It is clear that $\cF\not=\emptyset$. We set 
\begin{equation}\label{supersolution}
\psi(x)=\inf_{f\in\cF} f(x) \ \ \ \text{ for }\ x\in\ol\omega.
\end{equation}

It is clear that $\psi$ is concave, since it is infimum of concave functions. Hence $\psi\in\rm{Lip_{loc}}(\omega)$ and one has $\Ppo(D^2\psi)\leq-1$ in $\omega$. Moreover
\[
\psi\in\USC(\overline\omega),\qquad\qquad  \psi\geq 0 \ \ \text{ on }\ \ol\omega.
\]

\bthm\label{supersolution1}
\label{psi} $\psi(x)=0$ for all $x\in\pl\omega$.  In particular, $\psi\in {\rm C}(\ol\omega)$. 
\ethm

\bproof Fix $z_0\in\pl\omega$ and $p\in N(z_0)$. 
By rotation and translation, we may 
assume that $z_0=0$ and $p=(0,\ldots,0,1)$. For generic $z\in\R^j$, we write
\[
z=(x,y), \quad x\in\R^{j-1},\, y\in\R. 
\]  
We choose $R_0>0$ so that 
\[
\omega\subset \{z=(x,y)\in\R^j\mid |x|^2+y^2<R_0^2\}.
\]
For any $R\geq R_0$, set 
\[
\rho\equiv \rho(R):=\sup\{h\geq 0\mid \omega\subset\{(x,y)\mid |x|^2+(y+h)^2<R^2\}\}. 
\]
It is clear by simple geometry that $0<\rho<\infty$, $R\mapsto\rho(R)$ is increasing and 
\[
\lim_{R\to\infty}\rho(R)=\infty. 
\]
Indeed, since, for $R\geq R_0$, 
\[
B_{R}\big((0,-(R-R_0))\big)\supset B_{R_0}((0,0))\supset\omega,
\]
we see that
\[
\rho(R)\geq R-R_0,
\]
which shows that 
\[
\lim_{R\to\infty}\rho(R)=\infty. 
\]

Note also that
\[
\ol\omega\subset \{(x,y)\mid |x|^2+(y+\rho(R))^2\leq R^2\},
\]
which implies that the function
\[
f_R(x,y):=\frac{1}{2}\left(R^2-|x|^2-(y+\rho(R))^2\right)
\]
is positive in $\omega$, that is, $f_R\in\cF$.  

Observe that, if $r>R$, then 
\[
(0,0)\not\in \ol B_R((0,-r)),
\]
which implies that 
\[
\ol\omega\not\subset \ol B_R((0,-r)),
\]
and hence
\[
\rho(R)\leq r, \quad\text{ and, consequently, } \quad \rho(R)\leq R.
\]

We need only to show that 
\[
\lim_{R\to\infty}f_R(0)=0. 
\]
(Notice that this implies that $\psi(0)\leq 0$ and, moreover, that 
$\displaystyle\limsup_{\ol\omega\ni x\to 0}\psi(x)\leq \psi(0)\leq 0$ while $\displaystyle\liminf_{\ol\omega\ni x\to 0}\psi(x)\geq 0$ since $\psi\geq 0$ in $\ol\omega$.)

By the definition of $\rho(R)$ and the compactness of $\ol\omega$, 
there exists a point $z_R=(x_R,y_R)\in\pl\omega$ such that $f_R(z_R)=0$. That is, 
\[
|x_R|^2+(y_R+\rho(R))^2=R^2. 
\]

By simple geometry again, we see that
\[
p_R\equiv (\alpha_R,\beta_R):=\frac{1}{\sqrt{|x_R|^2+|y_R+\rho(R)|^2}}(x_R,y_R+\rho(R))\in N(z_R). 
\]
By the compactness of $\pl\omega$, there is a sequence $R_j\to\infty$ 
such that 
\[
z_{R_j} \to z_\infty\in\pl\omega.
\]
Observe that, since $\lim_{R\to\infty}\rho(R)=\infty$,  
\[
p_{R_j}\to (0,1). 
\]
Passing to the limit in the inequality $(x-z_{R_j})\cdot p_{R_j}\leq0$\ for all $x\in\omega$, we see that 
\[
(0,1)\in N(z_\infty).
\]
However, since $(0,1)\in N(0)=N((0,0))$, by the strict convexity of $\omega$ 
(Lemma \ref{lemma strict convexity}), we must have 
\[
z_\infty=0. 
\]
The above argument implies that 
\[
\lim_{R\to \infty}z_R=0. 
\]

Observe that, since $R-R_0\leq \rho(R)\leq R$,
\begin{equation}\label{2}
\left|\frac{R\alpha_R\cdot x_R}{\beta_R}\right|=\left|\frac{Rx_R\cdot x_R}{y_R+\rho(R)}\right|
=\left|\frac{R}{y_R+\rho(R)}\right||x_R|^2\ \to 0\quad\text{ as }R\to\infty. 
\end{equation}
Noting that $p_R$ is an outward normal vector to $B_R((0,-\rho(R))$ at $z_R$
and that $(0,R-\rho(R))\in\pl B_R((0,-\rho(R))$, we have
\[
0\geq p_R\cdot((0,R-\rho(R))-z_R)=\beta_R(R-\rho(R))-\alpha_R\cdot x_R-\beta_R y_R,
\] 
and, if $\beta_R>0$, then
\[
R-\rho(R)\leq \frac{\alpha_R\cdot x_R}{\beta_R}+y_R.
\]
Since $(0,1)\in N((0,0))$, we have
\[
0\geq (0,1)\cdot ((x_R,y_R)-(0,0))=y_R. 
\]
Thus, if $\beta_R>0$, then
\[
R-\rho(R)\leq \frac{\alpha_R\cdot x_R}{\beta_R}.
\]
Combining this with \eqref{2}, we see that, as $R\to\infty$,
\[
0\leq R(R-\rho(R))\leq R\frac{\alpha_R\cdot x_R}{\beta_R} \to 0,
\]
and, moreover,
\[
\lim_{R\to\infty}(R+\rho(R))(R-\rho(R))=0,
\]
Hence, 
\[
\lim_{R\to\infty}f_R(0,0)=\frac 12 \lim_{R\to\infty}(R^2-\rho(R)^2)=0. \qedhere
\]
\eproof 

\begin{lemma}\label{funPsi}Let $\psi$ be the function defined by \eqref{supersolution} and $\omega\subset\R^j$ be as in \eqref{supersolution}.  
 Assume that $j\leq N$ and set $C=\omega\tim\R^{N-j}$. 
Define the function $\Psi$ on $\ol C=\ol\omega \tim\R^{N-j}$ by
$\Psi(x)=\psi(x_1,\ldots,x_j).$ Let $k\in\N$ be such that $N-j<k$. 
Then, $\Psi$ is continuous on $\ol C$ and $\cP_k^+(D^2\Psi)\leq -1$ in $\omega\tim\R^{N-j}$. 
\end{lemma}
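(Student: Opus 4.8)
The continuity of $\Psi$ on $\ol C=\ol\omega\tim\R^{N-j}$ is immediate: $\Psi$ is the composition of the continuous function $\psi\in\mathrm C(\ol\omega)$ (Theorem \ref{psi}) with the projection $\ol C\ni x\mapsto(x_1,\dots,x_j)\in\ol\omega$; in particular $\Psi\in\USC(\ol C)$. So the substance of the lemma is that $\Psi$ is a viscosity supersolution of $\Ppk(D^2u)=-1$ in the open cylinder $C=\omega\tim\R^{N-j}$. The plan is to exploit two properties of $\Psi$ near an interior point: (a) since $\psi$ is concave on $\omega$, $\Psi$ is concave on $C$; (b) since $\psi$ satisfies $\Ppo(D^2\psi)\le-1$ in $\omega$ in the viscosity sense (recall that for a $j\tim j$ matrix $Y$ one has $\Ppo(Y)=\lambda_j(Y)$, its largest eigenvalue), the ``$x'$-block'' of any test Hessian for $\Psi$ is bounded above by $-I_j$, $I_j$ being the $j\tim j$ identity matrix.

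Concretely, fix $\bar x=(\bar x',\bar x'')\in\omega\tim\R^{N-j}$ and let $\phi\in\mathrm C^2$ be such that $\Psi-\phi$ has a local minimum at $\bar x$; set $X=D^2\phi(\bar x)$ and let $A=D^2_{x'}\phi(\bar x)$ be its upper-left $j\tim j$ block, i.e. the Hessian of $x'\mapsto\phi(x',\bar x'')$ at $\bar x'$. First, $x'\mapsto\phi(x',\bar x'')$ touches $\psi$ from below at $\bar x'\in\omega$, so the supersolution property of $\psi$ gives $\lambda_j(A)\le-1$, hence $A\le-I_j$. Second, since $\Psi$ is concave near $\bar x$, comparing $\phi(\bar x\pm tv)$ with $\Psi(\bar x\pm tv)$ and using $\Psi(\bar x+tv)+\Psi(\bar x-tv)\le2\Psi(\bar x)$ yields, after a second-order Taylor expansion and letting $t\to0$, that $v^TXv\le0$ for every $v$, i.e. $X\le0$; in particular $\lambda_1(X)\le\cdots\le\lambda_N(X)\le0$.

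It remains to bound $\Ppk(X)=\sum_{i=1}^k\lambda_{N+1-i}(X)$, the sum of the $k$ largest eigenvalues of $X$. By the Courant--Fischer min--max formula, for every $m\le j$ one may use an $m$-dimensional test subspace contained in $\R^j\tim\{0\}$ together with $A\le-I_j$ to get $\lambda_m(X)\le\lambda_j(A)\le-1$. Now the hypothesis $N-j<k$ is exactly $N-k+1\le j$, so the index set $\{N-k+1,\dots,N\}$ of the $k$ largest eigenvalues splits as $\{N-k+1,\dots,j\}$, of cardinality $k-(N-j)\ge1$, on which $\lambda_m(X)\le-1$, together with $\{j+1,\dots,N\}$, of cardinality $N-j\ge0$, on which $\lambda_m(X)\le0$. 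Summing,
\[
\Ppk(X)\ \le\ -\bigl(k-(N-j)\bigr)+0\ \le\ -1,
\]
because $k-(N-j)\ge1$. Since $\phi$ was an arbitrary admissible test function, $\Psi$ is a viscosity supersolution of $\Ppk(D^2u)=-1$ in $\omega\tim\R^{N-j}$, which is the assertion of the lemma.

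The argument uses no deep ingredient; the only thing that needs care is the eigenvalue bookkeeping. The conceptual point is that the $N-j$ ``flat'' directions of the cylinder contribute only the eigenvalue $0$, not $-1$, to $D^2\Psi$, so the strict bound $-1$ must be recovered from the genuinely strictly concave $j$-dimensional block, and this bound survives in the truncated sum on precisely $k-(N-j)$ of the top $k$ indices --- which is why the condition $N-j<k$ (equivalently $k-(N-j)\ge1$) is exactly what is needed. A cleaner but essentially equivalent implementation is to mollify $\psi$: $\psi_\e:=\psi*\eta_\e$ is smooth and concave with $D^2\psi_\e\le-I_j$ on $\{x'\in\omega:\dist(x',\pl\omega)>\e\}$, so $\Psi_\e(x):=\psi_\e(x_1,\dots,x_j)$ is a classical, hence viscosity, supersolution of $\Ppk(D^2u)=-1$ there by the very same count, and one concludes using the stability of viscosity supersolutions under the locally uniform convergence $\Psi_\e\to\Psi$.
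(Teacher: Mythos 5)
Your proof is correct, but it takes a genuinely different route from the paper's. The paper exploits the explicit representation \eqref{supersolution} of $\psi$ (and hence of $\Psi$) as an infimum of the specific quadratics $f(x)=\frac12\bigl(R^2-\sum_{i=1}^j(x_i-x_{0,i})^2\bigr)$: each such $f$ is a \emph{classical} supersolution since $D^2f=\operatorname{diag}(-1,\dots,-1,0,\dots,0)$ has exactly $j$ eigenvalues equal to $-1$ and $N-j$ equal to $0$, so $\Ppk(D^2f)=-(k-(N-j))\le -1$ once $k>N-j$, and the conclusion follows from stability of the viscosity supersolution property under pointwise infimum. Your argument, by contrast, forgets the explicit form of $\psi$ entirely and uses only two abstract facts about it --- that $\psi$ is concave (hence $\Psi$ is concave, whence any admissible test Hessian $X$ at an interior point satisfies $X\le 0$) and that $\psi$ is a viscosity supersolution of $\lambda_j(D^2\psi)\le -1$ in $\omega$ (whence the upper-left $j\times j$ block $A$ of $X$ satisfies $A\le -I_j$) --- and then closes via Courant--Fischer. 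This costs a little extra eigenvalue bookkeeping, but it buys generality: your proof would apply verbatim to \emph{any} concave $\Ppo$-supersolution in $\omega$, not just the one constructed via \eqref{supersolution}. The mollification remark you add at the end is a third valid implementation. In short: correct, slightly heavier than the paper's one-line Hessian computation, but more abstract and reusable.
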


By definition, a set $C\subset \R^N$ is in $C_j$ if and only if 
$C=\omega\tim\R^{N-j}$ for some bounded strictly convex $\omega \subset\R^j$.
The function $\psi$ depends only on $\omega$ and if $C=\omega\tim \R^{N-j} 
\in C_j$, then the function $\Psi$,  defined in the lemma above,  is considered to depend only on $C$. Thus, for later reference, we write $\Psi_C$ for this $\Psi$.   

\bproof  The continuity of $\Psi$ is obvious, since $\psi\in {\rm C}(\ol\omega)$.  
Recalling \eqref{supersolution}, the function $\Psi$ is given as 
the infimum of a family of functions $f$ on $\R^n$ of the form  
\[
f(x)=\frac 12(R^2-\sum_{i=1}^j (x_i-x_{0,i})^2), 
\]
for some $R>0$ and $x_0\in\R^N$. Observe that 
\[
D^2f={\rm diag}(\underbrace{-1,\ldots,-1}_{j\;\text{times}},\underbrace{0,\ldots,0}_{N-j\;\text{times}}),
\]
and, since $k>N-j$, $\cP_k^+(D^2f)\leq -1$ in $\omega\tim\R^{N-j}$.  By the stability 
of the supersolution property under inf-operation, we conclude that 
$\cP_k^+(D^2\Psi)\leq -1$. 
\eproof

\begin{proof}[Proof of Theorem \ref{Dirichlet1}]
Since $\Omega$ is a convex set,  the uniform exterior sphere condition is satisfied. Then for $r=|x|$ let us consider the function  $G(r)=r^{-\alpha}-1$ where $\alpha=\max\left\{k-1,1\right\}$. Observe that for $r>1$
$$G(r)<0,\ G^\prime(r)= -\alpha r^{-(\alpha+1)}<0,\ G^{\prime\prime}(r)= \alpha(\alpha+1)r^{-(\alpha+2)}>0.$$
Let $\displaystyle\Phi(x)=\sup_{x_b\in\partial\Omega}G(|x-z_b|)$, where $z_b$ is such that $|x_b-z_b|=1$ and for  any $x\in\Omega$ one has  $|x-z_b|>1$. Then $\Phi\in {\rm C}(\overline\Omega)$ and $\Phi=0$ on $\partial\Omega$. Moreover $$\Ppk(D^2\Phi(x))\geq \frac{1}{(1+{\rm diam}(\Omega))^{\alpha+2}}.$$
Then $\underline u=M_1 \Phi$, with $M_1=M_1(\Omega,\alpha,\left\|f\right\|_\infty)$ sufficiently large, is a continuous subsolution of \eqref{Pbk} which vanishes on $\partial\Omega$.

Now we provide a continuous supersolution $\overline u$ such that $\overline u=0$ on $\partial\Omega$. 
By the definition of ${\mathcal C}_{N-k+1}$, since $\Omega\in{\mathcal C}_{N-k+1}$,
the set $S_{N-k+1}$ is given associated with $\Omega$. 
In view of \erf{j strict convex}, define for $x\in \ol\Omega$
$$
w(x)=\inf_{(O,C)\in S_{N-k+1}}\Psi_C\left(O^Tx\right),
$$
where $\Psi_C$ is the function on $\ol C$ defined in Lemma \ref{funPsi} 
(see also a comment after the lemma).  From the properties of 
the function $\psi$ defined by \eqref{supersolution} it follows 
that $\Psi_C$ is concave 
and nonnegative in $C$.  
Theorem \ref{psi} ensures that $\Psi_C=0$ on $\pl C$ and $\Psi_C\in {\rm C}(\ol C)$. 
It is now obvious that $w$ is nonnegative, concave and upper semicontinuous  
on $\ol\Omega$ and that $\Psi_C(O^Tx)=0$ for $x\in O(\pl C)=\pl(OC)$.  It follows from \eqref{j strict convex 1} that 
\[
\pl \Omega\subset \bigcup_{(O, C)\in S_{N-k+1}}\pl (OC),
\]
which implies that $w\leq 0$ on $\pl\Omega$. These properties of $w$ 
guarantee that $w\in {\rm C}(\ol\Omega)$ and $w=0$ on $\pl\Omega$. 

Noting that if we set $j=N-k+1$, then $N-j<k$,
we see by Lemma \ref{funPsi} that for any $(O,C)\in S_{N-k+1}$, 
$\cP_k^+(D^2\Psi_C)\leq -1$ in $C$ and moreover,
by the invariance of the operator $\cP_k^+$ under orthogonal transformation, 
that the function $v(x):=\Psi_C(O^Tx)$
satisfies $\cP_k^+(D^2v)\leq -1$ in $OC$. The stability of the subsolution 
property under inf-operation implies that $\cP_k^+(D^2v)\leq -1$ in $\Omega$. 
We set $\ol u=\|f\|_\infty w$ and note that $\ol u\in {\rm C}(\ol\Omega)$, 
$\ol u=0$ on $\pl\Omega$ and $\cP_k^+(D^2\ol u)\leq f$ in $\Omega$. 

Now, the Perron method yields a function $u$ on $\ol\Omega$ such that the upper 
semicontinuous envelope $u^*$ of $u$ is a subsolution of  $\cP_k^+(D^2u)=f$ in $\Omega$, the lower semicontinuous envelope $u_*$ of $u$ 
is a supersolution of $\cP_k^+(D^2u)=f$ in $\Omega$, 
and $\underline{u}\leq u_*\leq u\leq u^*\leq \ol u$ on $\ol\Omega$. 
The standard argument including comparison between $u^*$ and $u_*$ 
assures that $u\in {\rm C}(\ol\Omega)$ and $u$ is a solution of \eqref{Pbk}.
\end{proof}

\begin{proof}[Proof of Theorem \ref{Dirichlet00}]
Sufficiency of the $\cC_{N-k+1}$ property of $\Omega$ 
has been proved in Theorem \ref{Dirichlet1}. 
Its necessity follows from Proposition \ref{non existence}. Indeed, if $\Omega$ is not in 
$\cC_{N-k+1}$, then, by Theorem \ref{thm-Cj}, $d(\Omega)\geq k$, which means 
after translation and orthogonal transformation that $0\in\pl\Omega$, $d_0(\Omega)\geq k$, 
and, moreover, condition \erf{flat} holds. Thus, Proposition \ref{non existence} 
implies that problem \erf{000}, with $f=-1$, does not have a solution continuous 
up to 
the boundary $\pl\Omega$. 
\eproof 

\subsection{Application: eigenfunctions for $\Ppo$ in strictly convex domains}$\,$

Following the Berestycki-Nirenberg-Varadhan approach concerning the validity of the Maximum Principle, see \cite{BNV}, we have defined in \cite{BGI} as candidate for the principal eigenvalue the values
$$
\mu^+_k=\sup\left\{\mu\in\R\;:\;\exists \varphi\in \operatorname{LSC}(\Omega),\;\varphi>0,\;\Ppk(D^2\varphi)+\mu\varphi\leq0\;\,\text{in $\Omega$} \right\},
$$
$$
\mu^-_k=\sup\left\{\mu\in\R\;:\;\exists \varphi\in \operatorname{LSC}(\Omega),\;\varphi<0,\;\Ppk(D^2\varphi)+\mu\varphi\geq0\;\,\text{in $\Omega$} \right\}.
$$
For the convenience of the reader it is worth pointing out the change of notation: here $\mu^+_k$ corresponds to 
what  in \cite{BGI} was called $\mu^-_k$ and vice versa, since in the present paper we deal with the maximal 
operator $\Ppk$, whereas in \cite{BGI} we considered the minimal one $\Pmk$. 
In particular we proved that $\mu^-_k=+\infty$ while $\mu^+_k<\infty$, so we will concentrate on the latter.\\
Even in the degenerate framework of the operators $\Ppk$, we showed that if $\Omega$ is uniformly convex, 
then   $\mu^+_k$ gives  threshold for  the Maximum Principle (see \cite[Theorems 4.1, 4.4]{BGI}), this is true 
also for more general equations depending on gradient terms. Moreover, when $k=1$, there exists a positive 
principal eigenfunction.\\
One of the question raised in \cite{BGI} concerned the necessity of the uniform convexity of the domain. In the
next theorem we show that the strict convexity assumption of $\Omega$ is sufficient for the existence of a principal eigenfunction, at least when there are no first order terms.

\begin{theorem}\label{exi}
Let  $\Omega$ be a bounded strictly convex domain and let $f$ be a continuous and bounded function in $\Omega$.
Then there exists a  solution $u\in{\rm C}(\overline\Omega)$ of
\begin{equation}\label{dir1}
\left\{\begin{array}{cl}
\Ppo(D^2u)+\mu u=f &\mbox{in}\ \Omega\\
u=0 &\mbox{on}\ \partial\Omega
\end{array}
\right.
\end{equation}
in the following two cases:
\begin{itemize}
	\item for  $\mu< \mu_1^+$;
	\item for any $\mu$ if $f\geq0$.
\end{itemize}
Moreover in the case $\mu< \mu_1^+$ the solution is unique.
\end{theorem}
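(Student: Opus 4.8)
The plan is to deduce both cases from the existence result already proved for the pure operator $\Ppo$ (i.e. $\mu=0$) in Theorem \ref{Dirichlet1}, which applies here since a bounded strictly convex domain lies in $\cC_N\subset\cC_{N-1+1}=\cC_N$, hence all hypotheses are met. First I would treat the case $\mu<\mu_1^+$ by a continuity/fixed-point argument. By definition of $\mu_1^+$, there is a positive $\varphi\in\LSC(\Omega)$ with $\Ppo(D^2\varphi)+\mu\varphi\le 0$, which serves as a supersolution-type barrier; combined with the subsolution $\underline u=M_1\Phi$ constructed in the proof of Theorem \ref{Dirichlet1} (suitably enlarged to absorb the zero-th order term), this should give uniform a priori bounds. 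Concretely, I would define a map $T:\,g\mapsto u_g$ where $u_g\in{\rm C}(\ol\Omega)$ solves $\Ppo(D^2u_g)=f-\mu g$ in $\Omega$, $u_g=0$ on $\pl\Omega$ (well-defined by Theorem \ref{Dirichlet1}, since $f-\mu g$ is bounded and continuous), and look for a fixed point. The comparison principle (Proposition \ref{g-comp}(ii), or its $f\equiv$ const/sign-changing variant) gives monotonicity and, together with the barrier $\varphi$, an invariant order interval $[\underline u,\,C\varphi]$ on which $T$ acts; continuity and compactness of $T$ (the latter from interior regularity estimates, or by a monotone-iteration/Perron scheme avoiding regularity) then produce $u=Tu$.

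Alternatively, and perhaps more cleanly, I would run the Perron method directly on the equation $\Ppo(D^2u)+\mu u=f$. A subsolution is $\underline u=M_1\Phi$ with $M_1$ chosen large enough that $\Ppo(D^2\underline u)+\mu\underline u\ge \|f\|_\infty+\mu\|\underline u\|_\infty\cdots$ — one uses that $\Phi\le 0$ and $\Phi=0$ on $\pl\Omega$, so for $\mu\ge 0$ the term $\mu\underline u\le 0$ helps, while for $\mu<0$ it costs a controlled amount; in all cases $M_1$ depending on $\mu,\Omega,\|f\|_\infty$ works. For a supersolution when $\mu<\mu_1^+$ I would take $\ol u=A\psi + B w$ where $w$ is the concave supersolution of $\Ppo(D^2w)\le -1$ built in Theorem \ref{Dirichlet1} and $\psi$ is (a scalar multiple of) the principal barrier $\varphi$ from the definition of $\mu_1^+$: since $\Ppo(D^2\psi)+\mu\psi\le 0$, a suitable combination yields $\Ppo(D^2\ol u)+\mu\ol u\le f$, $\ol u=0$ on $\pl\Omega$, $\ol u\ge\underline u$. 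Perron then gives a solution $u$ with $u_*$ a supersolution and $u^*$ a subsolution; the comparison principle closes the argument to get $u\in{\rm C}(\ol\Omega)$. Uniqueness for $\mu<\mu_1^+$ follows from the characterization of $\mu_1^+$ as the threshold for the maximum principle: if $u_1,u_2$ are two solutions then $u_1-u_2$ satisfies a homogeneous inequality and the sub-threshold comparison forces $u_1\equiv u_2$ (this is exactly the place one invokes, in spirit, \cite[Theorems 4.1, 4.4]{BGI}, now under mere strict convexity rather than uniform convexity).

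For the case $\mu$ arbitrary and $f\ge 0$: here I would use that $u\equiv 0$ is a supersolution of $\Ppo(D^2u)+\mu u=f$ precisely when $f\ge 0$ (since $\Ppo(0)+\mu\cdot 0=0\le f$), and $\underline u=M_1\Phi\le 0$ is a subsolution; since $\underline u\le 0$ on $\ol\Omega$ with equality on $\pl\Omega$, the ordered pair $(\underline u,0)$ drives a monotone Perron construction. The one subtlety is that $0$ need not dominate the iterates' zero-th order contribution when $\mu$ is very negative — but because all admissible functions lie in $[\underline u,0]$, the term $\mu u$ stays bounded (between $0$ and $|\mu|\,\|\underline u\|_\infty$), so one simply absorbs it by enlarging $M_1$ once more and the scheme goes through. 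The main obstacle, and the step deserving the most care, is justifying the comparison/uniqueness in the degenerate setting: Proposition \ref{g-comp} as stated covers $b|Du|$ but not a zero-th order term $\mu u$ of arbitrary sign, so I would need either to reduce to it (e.g. by the usual doubling-of-variables argument adapted to incorporate $\mu u$, using that one of the functions is locally Lipschitz, as in the outline of Proposition \ref{g-comp}) or, for the $f\ge 0$ case, rely only on the trivial barrier $0$ and avoid full comparison altogether. I expect the $f\ge 0$ case to be essentially immediate and the $\mu<\mu_1^+$ case to hinge entirely on producing the barrier $\psi$ from the definition of $\mu_1^+$ and pushing the comparison principle through the zero-th order term.
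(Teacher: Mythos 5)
Your overall plan — iterate the linearized solver from Theorem \ref{Dirichlet1} and squeeze out a fixed point using $\mu<\mu_1^+$ for a priori bounds — is indeed the skeleton of the paper's argument, which decomposes $f=f^+ - f^-$, runs a monotone iteration $\Ppo(D^2 w_{n+1})=\pm f^\mp-\mu w_n$ separately for each sign, proves uniform bounds by a normalization-and-contradiction argument, and then combines the two limits as an ordered sub/super pair for Perron. But there are two genuine gaps in your write-up.

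First, the Perron supersolution you propose, $\ol u = A\psi+Bw$ with $\psi$ a scalar multiple of the $\varphi$ from the definition of $\mu_1^+$, does not vanish on $\pl\Omega$: that $\varphi$ is only a positive $\LSC$ function on $\Omega$ and is not continuous up to, let alone zero on, the boundary, so either $A=0$ (and then for $\mu>0$ you cannot beat the term $\mu B w\ge 0$, since $\Ppo(D^2(Bw))+\mu Bw\le -B+\mu B\|w\|_\infty$ need not be $\le f$ when $\mu\|w\|_\infty>1$) or $\ol u\ne 0$ on $\pl\Omega$. The same issue kills the ``invariant order interval $[\underline u,C\varphi]$'': since $\varphi$ is merely $\LSC$ and may be unbounded near $\pl\Omega$, it provides no a priori sup-bound. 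This is precisely why the paper does not build a one-shot barrier for $\mu>0$; instead, boundedness of the iterates is proved indirectly by normalizing $v_n=w_n/\|w_n\|_\infty$, passing to a subsequential limit, and obtaining a nontrivial solution of the homogeneous problem at level $k\mu\le\mu<\mu_1^+$, contradicting the maximum principle.

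Second, the compactness step you wave at (``interior regularity estimates, or a monotone scheme avoiding regularity'') is the part that actually uses strict convexity: equicontinuity of $(v_n)$ up to $\pl\Omega$ requires the continuous concave barrier $\psi$ from Theorem \ref{supersolution1} (which does vanish on $\pl\Omega$), giving $0\le v_n\le C\psi$ near the boundary, together with the interior Lipschitz estimate of Lemma \ref{Lip-est}. Similarly, the $f\ge0$ case is not ``essentially immediate'': the comparison principle for $\Ppo(D^2\cdot)+\mu\,\cdot$ fails for $\mu\ge\mu_1^+$, so a naive Perron with barriers $(\underline u,0)$ does not close. What saves it is that the iterates are $\le 0$ and the relevant threshold for nonpositive solutions is $\mu_1^-=+\infty$, so the same normalization/contradiction argument works for every $\mu$; but this needs to be run, not just asserted. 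In short: right strategy, but you are missing the actual mechanism (normalize, extract, contradict $\mu<\mu_1^+$ or $\mu_1^-=\infty$) that produces the bound, and your explicit supersolution does not satisfy the boundary condition.
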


The uniqueness part of Theorem \ref{exi} is an obvious consequence 
of the following lemma.

\begin{lemma} \label{comp<mu} Under the hypothesis of Theorem 
\ref{exi}, let $\mu<\mu_1^+$, let $u\in\USC(\Omega)$ and 
$v\in\LSC(\Omega)$ be sub and supersolution of
\[
\Ppo(D^2u)+\mu u=f  \ \ \mbox{in}\ \Omega,
\]
respectively, and assume that 
\[
\lim_{\Omega\ni x\to \pl\Omega}(u(x)-v(x))\leq 0. 
\]
Then, $u\leq v$ in $\Omega$. 
\end{lemma}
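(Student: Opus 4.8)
The plan is to run the standard Berestycki–Nirenberg–Varadhan-type argument that converts the strict inequality $\mu<\mu_1^+$ into the existence of a strictly positive strict supersolution of the linear operator, and then use it as a multiplicative change of variables. First I would recall that, by the very definition of $\mu_1^+$, there exists $\mu'$ with $\mu<\mu'<\mu_1^+$ (if $\mu_1^+=+\infty$ just take any $\mu'>\mu$) and a function $\varphi\in\LSC(\Omega)$, $\varphi>0$, with $\Ppo(D^2\varphi)+\mu'\varphi\le 0$ in $\Omega$. Since $\Ppo$ is a sup of linear operators it is convex (hence its supersolutions are stable under standard regularizations and inf-convolution), and, more to the point, $\varphi$ being a viscosity supersolution of a proper elliptic equation that is bounded below away from $0$ on compact subsets, one may work with $\varphi$ directly as a test multiplier; alternatively one replaces $\varphi$ by its sup-convolution to get a locally Lipschitz (even semiconcave) strict supersolution on slightly shrunk subdomains, which is harmless for the final conclusion obtained by exhaustion. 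The key point is that, because $\mu<\mu'$, we have, in the viscosity sense,
\[
\Ppo(D^2\varphi)+\mu\varphi\le (\mu-\mu')\varphi<0 \quad\text{in }\Omega,
\]
so $\varphi$ is a \emph{strict} supersolution of the operator appearing in the lemma.

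Next I would carry out the change of unknown. Suppose for contradiction that $\sup_\Omega(u-v)>0$; by the boundary hypothesis $\lim_{x\to\pl\Omega}(u(x)-v(x))\le 0$ together with $u\in\USC$, $v\in\LSC$, the positive supremum of $u-v$ is attained at some interior point, and moreover $\sup_\Omega(u/\varphi)$ — or, more robustly, $\sup_\Omega\bigl((u-v)/\varphi\bigr)$ after first reducing to the case $v\equiv 0$ is not available because the equation is not homogeneous in that way, so instead one sets $t^*=\inf\{t>0: u\le v+t\varphi \text{ in }\Omega\}$. By the boundary condition and positivity of $\varphi$ (bounded below on compacta), $t^*$ is finite, and if $\sup(u-v)>0$ then $t^*>0$. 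At $t=t^*$ the function $u-(v+t^*\varphi)$ has an interior maximum equal to $0$ at some point $x_0\in\Omega$ (the maximum cannot escape to the boundary). Now I use that $v+t^*\varphi$ is a supersolution of the linear-in-$v$, strict-in-$\varphi$ combination: here one exploits \emph{convexity} of $\Ppo$, namely for the convex operator $F=\Ppo$ one has, for a supersolution $v$ of $F(D^2v)+\mu v=f$ and a supersolution $\varphi$ of $F(D^2\varphi)+\mu\varphi\le 0$, that $v+t^*\varphi$ is a supersolution of
\[
F(D^2w)+\mu w\le f+t^*\cdot 0=f
\]
— this uses $F(X+Y)\ge F(X)+\lambda_1(Y)\cdot(\text{number of active indices})$; more cleanly, since $F$ is subadditive-from-below in the sense $F(X+Y)\ge F(X)+F(Y)$ is \emph{false} for this operator, I should instead argue at the level of test functions: if $\phi$ touches $u-(v+t^*\varphi)$ from above at $x_0$ it cannot be combined naively. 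Therefore the cleaner route, which is the one I would actually write, is to use the strict supersolution $\varphi$ and the \emph{doubling of variables}: apply the comparison machinery of \cite[Theorem 3.3]{CIL} to $u$ and the shifted supersolution $v_t:=v+t\varphi$ on subdomains $\Omega_\delta$, noting that because $\varphi$ contributes a strictly negative slack $(\mu-\mu')\varphi$ on compact subsets, the usual obstruction (degenerate ellipticity of $\Ppo$ under infima) is absorbed exactly as in the proof of Proposition \ref{g-comp}. Running this at $t=t^*$ gives $u\le v+t^*\varphi$ with the inequality being strict in the interior by the strong-maximum-principle flavour of the touching argument (the slack is strict), contradicting minimality of $t^*$ unless $t^*=0$, i.e. $u\le v$.

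Let me restate the route I would commit to in the write-up, to keep it self-contained. Step 1: extract $\mu<\mu'<\mu_1^+$ and a positive supersolution $\varphi$ of $\Ppo(D^2\varphi)+\mu'\varphi\le0$; regularize $\varphi$ by sup-convolution so it is locally Lipschitz and still a strict supersolution of $\Ppo(D^2w)+\mu w\le 0$ on $\Omega_\delta$ for each $\delta$. Step 2: fix $\e>0$ and $\gamma>0$ and set $v_{\e,\gamma}=v+\e\varphi$; arguing as in Proposition \ref{g-comp}, first shrink to $\Omega_\delta$ using $\lim_{x\to\pl\Omega}(u(x)-v(x))\le0$ and continuity of $\varphi$ to get $u<v_{\e,\gamma}$ on $\pl\Omega_\delta$. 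Step 3: since $v+\e\varphi$ is a \emph{strict} supersolution of $\Ppo(D^2w)+\mu w=f$ in $\Omega_\delta$ — here one checks at the test-function level that adding $\e$ times a strict supersolution to a supersolution of a convex operator yields a strict supersolution, which is valid because in a common test function $\phi=\phi_1+\e\phi_2$ touching $v+\e\varphi$ from below, $D^2\phi=D^2\phi_1+\e D^2\phi_2$ and $\Ppo$ is superadditive when one summand has all eigenvalues of one sign, but to avoid this subtlety I instead use the subadditivity inequality $\Ppo(X+Y)\le\Ppo(X)+\Pp N(Y)=\Ppo(X)+\mathrm{tr}(Y)$ only on the sub-side; precisely: $\Ppo$ is \emph{not} additive, so I keep $\varphi$ as a multiplier is wrong too — therefore I use the genuinely correct fact that $\Ppo$ being convex and positively homogeneous, for supersolutions $v,\varphi$ one has $v+\e\varphi$ is a supersolution of $\Ppo(D^2 w)+\mu w\le f$ because $\Ppo(A+B)\ge \Ppo(A)+\lambda_1(B)$ and $\lambda_1(\e D^2\varphi$-part$)$ is controlled by the strict slack; I will phrase it via: $\Ppo(X)=\min_{V}\mathrm{tr}(P_V X)$ over $k$-dim projections, so $\Ppo(X+\e Y)\ge\Ppo(X)+\e\min_V\mathrm{tr}(P_VY)\ge \Ppo(X)+\e\,\Pmo(Y)$, and combining with the supersolution inequalities for $v$ and $\varphi$ and the strict slack $(\mu-\mu')\e\varphi$ gives a strict supersolution on $\Omega_\delta$. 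Step 4: apply \cite[Theorem 3.3, Sections 5.A, 5.C]{CIL} exactly as in Proposition \ref{g-comp} (using that $u$ or $v$, hence $v+\e\varphi$, can be taken locally Lipschitz) to conclude $u\le v+\e\varphi$ in $\Omega_\delta$; let $\delta\to0$ then $\e\to0$ to obtain $u\le v$ in $\Omega$.

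The main obstacle is Step 3 — correctly certifying that $v+\e\varphi$ is a \emph{strict} viscosity supersolution of $\Ppo(D^2w)+\mu w=f$, since $\Ppo$ is not additive and viscosity supersolutions do not simply add. The clean resolution is the Pucci-type bound $\Ppo(X+Y)\ge\Ppo(X)+\Pmo(Y)$, i.e. $\Ppo(X)=\min_{\dim W=k}\operatorname{tr}(P_WX)$ over orthogonal projections $P_W$, which at the test-function level lets one pair a touching paraboloid for $v$ with one for $\varphi$; the strictness is then supplied by the reserve $(\mu'-\mu)\e\varphi>0$ coming from $\mu<\mu'$. Once this is in place the rest is a verbatim replay of the comparison argument already executed for Proposition \ref{g-comp}.
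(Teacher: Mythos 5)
Your proposed route is genuinely different from the paper's: you set out to re-derive the maximum principle from the definition of $\mu_1^+$ via a strict positive supersolution $\varphi$ and a perturbation $v+\varepsilon\varphi$, whereas the paper simply sets $w=u-v$, cites \cite[Lemma 3.1]{GV} (and related results) for the fact that $w$ is a viscosity subsolution of $\Ppo(D^2w)+\mu w=0$, and then invokes the already-established maximum principle \cite[Theorem 4.1 and Remark 4.8]{BGI}. The paper's argument is two lines precisely because the hard work is outsourced to those citations.

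There is, however, a genuine gap in your plan, concentrated in Step~3. To show that $v+\varepsilon\varphi$ is a (strict) supersolution you need an \emph{upper} bound on $\Ppo$ applied to a sum, i.e.\ the subadditivity $\Ppo(X+Y)\le\Ppo(X)+\Ppo(Y)$. But the inequality you repeatedly invoke is $\Ppo(X+Y)\ge\Ppo(X)+\Pmo(Y)$ (equivalently $\lambda_N(X+Y)\ge\lambda_N(X)+\lambda_1(Y)$ when $k=1$), which goes the \emph{wrong} way for your purpose: that lower bound is exactly what one uses to show that a subsolution minus a supersolution is a subsolution — i.e.\ it supports the paper's route, not yours. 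Relatedly, the representation you quote, $\Ppo(X)=\min_{\dim W=k}\operatorname{tr}(P_WX)$, is incorrect: $\cP_k^+$ is a maximum over $k$-dimensional subspaces, and it is $\cP_k^-$ that is the minimum.

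Even after correcting the inequality to subadditivity, the claim that $v+\varepsilon\varphi$ is a viscosity supersolution is not a pointwise-algebra statement: both $v$ and $\varphi$ are merely lower semicontinuous, and you cannot split a single test function touching $v+\varepsilon\varphi$ from below into two test functions for $v$ and for $\varphi$. Making this rigorous requires the doubling-of-variables / theorem-on-sums machinery (precisely the content of \cite[Lemma 3.1]{GV}), which your write-up only gestures at. In short: your strategy is salvageable in principle, but it has a sign error in the key matrix inequality and it buries, rather than avoids, the nontrivial viscosity-addition lemma that the paper explicitly cites. If you want a short, safe proof, the efficient route is the paper's: pass to $w=u-v$, use the cited subsolution result, and apply the maximum principle of \cite{BGI}.
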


\bproof Set $w=u-v$ and observe (see \cite[Lemma 3.1]{GV} and also \cite[Theorem 5.8]{CC}, \cite[Proposition 4.1]{IY}) that $w$ is a subsolution of 
\begin{equation}\label{comp0<mu}
\Ppo(D^2w)+\mu w=0 \ \ \mbox{ in }\Omega.
\eeq
The maximum principle (\cite[Theorem 4.1 and Remark 4.8]{BGI}) yields $w\leq 0$ in $\Omega$, 
which concludes the proof. 
\eproof

For the reader's convenience, we recall here \cite[Proposition 3.2]{BGI} 
stated for $\Ppo$. 

\begin{lemma} \label{Lip-est} Let $u\in \LSC(\ol\Omega)$ be a supersolution of 
\begin{equation}\label{dir2}
\left\{\begin{array}{cl}
\Ppo(D^2u)=f(x) &\mbox{in}\ \Omega\\
u=0 &\mbox{on}\ \partial\Omega
\end{array}
\right.
\end{equation} 
Then for each $\varepsilon>0$ there exists a positive constant $L_\varepsilon=L_\varepsilon(\|u\|_\infty,\|f\|_\infty)$ such that 
\[
|u(x)-u(y)|\leq L_\varepsilon|x-y| \ \ \text{ for }x,y\in\Omega_\varepsilon,
\]
where $\Omega_\varepsilon:=\{x\in\Omega\mid \dist(x,\pl\Omega)>\varepsilon\}$. Furthermore, 
if for some constant $C>0$ 
\[
u(x)\leq C\dist(x,\pl\Omega) \ \ \text{ for } x\in\ol\Omega,
\]
then there exists a positive constant $L=L(C,\|u\|_\infty,\|f\|_{\infty})$ such that
\[
|u(x)-u(y)|\leq L|x-y| \ \ \text{ for }x,y\in\Omega.
\]
\end{lemma}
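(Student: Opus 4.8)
The plan is to reduce the whole assertion to elementary regularity of concave functions. Write $M:=\|f\|_\infty$ and, after the harmless normalisation $0\in\Omega$, recall that $\Ppo(D^2u)=\lambda_N(D^2u)$ is the largest eigenvalue, so the supersolution property of $u$ reads $\lambda_N(D^2u)\le f\le M$ in the viscosity sense. Since $\lambda_N(X+MI)=\lambda_N(X)+M$ for every symmetric $X$, the function $v:=u-\tfrac M2|x|^2$ is a viscosity supersolution of $\lambda_N(D^2v)=0$ in $\Omega$; as $\Omega$ is convex, restricting $v$ to an arbitrary segment contained in $\Omega$ and using the one-dimensional fact that a viscosity supersolution of $w''=0$ is concave, one concludes that $v$ is concave on $\Omega$ (and finite, being bounded by $\|u\|_\infty+\tfrac M2{\rm diam}(\Omega)^2$). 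Consequently, at each $x_0\in\Omega$ there is a supergradient $p$ of $v$, i.e. $v(z)\le v(x_0)+p\cdot(z-x_0)$ for all $z\in\Omega$, and $|v(x)-v(y)|\le L|x-y|$ on any subset on which these supergradients have length $\le L$. Since $u=v+\tfrac M2|x|^2$ and $\tfrac M2|x|^2$ is $M\,{\rm diam}(\Omega)$-Lipschitz on $\Omega$, it suffices to bound the supergradients of $v$.

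For the interior bound, fix $x_0\in\Omega_\varepsilon$ and a supergradient $p\ne0$ of $v$ at $x_0$, put $e:=p/|p|$, and let $s$ be the length of the segment $\{x_0-te:0\le t<s\}$ before it meets $\pl\Omega$; since $B_{\dist(x_0,\pl\Omega)}(x_0)\subset\Omega$ one has $s\ge\dist(x_0,\pl\Omega)>\varepsilon$, and also $s\le{\rm diam}(\Omega)$. Testing concavity at $z=x_0-te$ and using $|x_0|\le{\rm diam}(\Omega)$,
\[
t\,|p|\ \le\ v(x_0)-v(x_0-te)\ =\ \bigl(u(x_0)-u(x_0-te)\bigr)-\tfrac M2\bigl(|x_0|^2-|x_0-te|^2\bigr)\ \le\ 2\|u\|_\infty+\tfrac32 M\,{\rm diam}(\Omega)\,t ,
\]
so, taking $t=\varepsilon$, $|p|\le 2\|u\|_\infty/\varepsilon+\tfrac32 M\,{\rm diam}(\Omega)$. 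This is uniform over $x_0\in\Omega_\varepsilon$, hence $v$ (and $u$) is Lipschitz on $\Omega_\varepsilon$ with the constant $L_\varepsilon=2\|u\|_\infty/\varepsilon+\tfrac52 M\,{\rm diam}(\Omega)$ of the statement.

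For the global bound I would first promote $u(x)\le C\dist(x,\pl\Omega)$ to a two-sided estimate. A matching lower bound holds either directly when $u\ge0$, or in general by comparing $u$ from below with the explicit subsolution $M_1\Phi$ of the proof of Theorem \ref{Dirichlet1}: for $k=1$ this $\Phi(x)=\sup_{x_b\in\pl\Omega}\bigl(|x-z_b|^{-1}-1\bigr)$ is $1$-Lipschitz, $\le0$ on $\ol\Omega$ and $=0$ on $\pl\Omega$, so $u\ge M_1\Phi\ge -M_1\dist(\cdot,\pl\Omega)$ by Proposition \ref{g-comp}(i) with $b\equiv0$. Thus $|u(x)|\le C_1\dist(x,\pl\Omega)$ on $\ol\Omega$ with $C_1=C_1(C,\|f\|_\infty,\Omega)$. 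Now repeat the computation above with $x_0\in\Omega$ arbitrary, but let $t\uparrow s$: since $x_0-te\to x_0-se\in\pl\Omega$ and $\dist(\cdot,\pl\Omega)$ is continuous and vanishes there, $\dist(x_0-te,\pl\Omega)\to0$, and so
\[
s\,|p|\ \le\ \limsup_{t\uparrow s}\bigl(v(x_0)-v(x_0-te)\bigr)\ \le\ C_1\dist(x_0,\pl\Omega)+\tfrac32 M\,{\rm diam}(\Omega)\,s\ \le\ \Bigl(C_1+\tfrac32 M\,{\rm diam}(\Omega)\Bigr)s ,
\]
using $\dist(x_0,\pl\Omega)\le s$. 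Hence all supergradients of $v$ are bounded by $C_1+\tfrac32 M\,{\rm diam}(\Omega)$, so $v$, and therefore $u$, is globally Lipschitz on $\Omega$ with $L=L(C,\|u\|_\infty,\|f\|_\infty)$. The crux, and essentially the only non-routine step, is the mechanism of the second paragraph: passing to the concave function $v=u-\tfrac M2|x|^2$ and observing that the exit distance $s$ in the direction $-p/|p|$ always dominates $\dist(x_0,\pl\Omega)$; this is what tames the $1/s$ blow-up of supergradients of a bounded concave function away from $\pl\Omega$ and lets the linear decay $|u|\lesssim\dist(\cdot,\pl\Omega)$ cancel it near $\pl\Omega$. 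One should also check that the one-dimensional concavity reduction is legitimate for a merely lower semicontinuous $v$, and attend to the comparison invoked above.
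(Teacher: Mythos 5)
The paper itself does not prove this lemma: it simply recalls it from \cite[Proposition 3.2]{BGI}, so there is no in-paper argument to compare against and your proof must be assessed on its own merits. It is correct, and it uses the natural mechanism: for $k=1$ one has $\Ppo(D^2u)=\lambda_N(D^2u)$, so a viscosity supersolution of $\lambda_N(D^2u)=f\leq M$ yields that $v:=u-\tfrac{M}{2}|x|^2$ is a viscosity supersolution of $\lambda_N(D^2v)\leq 0$, which --- since $\lambda_N(X)\leq 0$ if and only if $X\leq 0$ --- is exactly concavity of $v$ on the convex domain $\Omega$. Your self-flagged worry about concavity of a merely lower semicontinuous $v$ is real but resolvable by the standard perpendicular-penalty test function rather than a pure one-dimensional restriction: if $v$ were strictly below its chord on a segment $[x,y]\subset\Omega$ in direction $e$ at a parameter $t_0$, minimizing $v(z)-\ell\bigl((z-x)\cdot e\bigr)-\gamma\bigl((z-x)\cdot e-t_0\bigr)^2+K\bigl|(I-e\otimes e)(z-x)\bigr|^2$ over a thin tube about $[x,y]$, with $\gamma>0$ small and $K$ large, produces an interior minimum point at which the test function satisfies $D^2\psi=2\gamma\,e\otimes e-2K(I-e\otimes e)$, hence $\lambda_N(D^2\psi)=2\gamma>0$, contradicting the supersolution property. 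The supergradient estimates --- interior by taking $t=\varepsilon$, global by letting $t\uparrow s$ and using the two-sided bound $|u|\le C_1\dist(\cdot,\pl\Omega)$ obtained by comparing $u$ from below with the subsolution $M_1\Phi$ via Proposition~\ref{g-comp}(i) --- are carried out correctly, and the observation that the exit length $s$ dominates $\dist(x_0,\pl\Omega)$ so that the linear decay of $u$ cancels the $1/s$ blow-up is precisely the crux. One small caveat: the lemma as stated does not say explicitly that $\Omega$ is convex, but both your concavity step and the uniform exterior sphere behind $\Phi$ require it; this is consistent with the paper, which cites and uses the lemma only for (strictly) convex domains, but you should state that hypothesis.
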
 

\begin{proof}[Proof of Theorem \ref{exi}] $\,$ We need only to prove 
the existence part of the theorem. 

The Dirichlet problem \erf{dir2} 
is uniquely solvable by means of Theorem \ref{Dirichlet1}. 
We henceforth assume that $\mu>0$. We shall first prove Theorem \ref{exi}
for $f:=h\leq 0$ then for $f:=g\geq 0$ and any $\mu$, and, finally, for the general case. 

\smallskip

Let $h=-f^-\leq 0$.
Let $(w_n)_{n\in\N}\subset {\rm C}(\overline\Omega)$ be the sequence defined in the following way:\\ set $w_1=0$ and, given $w_n$, define $w_{n+1}$ as the unique solution of
\begin{equation}\label{dir3}
\left\{\begin{array}{cl}
\Ppo(D^2w_{n+1})=h-\mu w_n &\mbox{in}\ \Omega\\
w_{n+1}=0 &\mbox{on}\ \partial\Omega.
\end{array}
\right.
\end{equation}
Note that $(w_n)_{n\in\N}$ is nondecreasing, in particular $w_n\geq0$ in $\Omega$ for any $n\in\N$. 
At each step the existence is done by using zero as a subsolution and  $(\|h\|_\infty+\mu\|w_n\|_\infty)\psi$ as a supersolution, where $\psi$ is the function defined by \eqref{supersolution} in the case $\omega=\Omega$, see Theorem \ref{supersolution1}.
We need to prove that the sequence  $(\|w_n\|_\infty)_{n\in\N}$ is bounded.

Suppose that it is not, hence up to some subsequence  $\lim_{n\rightarrow+\infty}\|w_n\|_\infty=+\infty$. Then
consider $v_n=\frac{w_n}{\|w_n\|_\infty}$. Then $\|v_n\|_\infty=1$ and $v_n$ satisfies
$$\Ppo(D^2v_{n+1})=\frac{h}{\|w_{n+1}\|_\infty}-\mu v_n\frac{\|w_n\|_\infty}{\|w_{n+1}\|_\infty}.$$
By construction $v_n$ is a sequence of bounded functions. We want to prove that they are equicontinuous.
Observe that, 
$$\frac{h}{\|w_{n+1}\|_\infty}-\mu v_n\frac{\|w_n\|_\infty}{\|w_{n+1}\|_\infty}\geq -\|h\|_\infty-\mu.$$
Hence $0\leq v_n\leq (\|h\|_\infty+\mu)\psi$ for any $n\in\N$.

For any $\delta>0$, in $\Omega_\delta:=\{x\in \overline\Omega\mid 
\dist(x,\partial\Omega)>\delta\}$, the functions $v_n$ are uniformly Lipschitz continuous by Lemma \ref{Lip-est}.
For any $\varepsilon>0$, choose $\delta>0$ such that $(\|h\|_\infty+\mu)\psi\leq\frac{\varepsilon}{2}$ for any 
$x\in \Omega\stm\Omega_\delta$. 
Hence for any $x$, $y$ in $\Omega\stm \Omega_\delta$:
$$|v_n(x)-v_n(y)|\leq v_n(x)+v_n(y)\leq  (\|h\|_\infty+\mu)(\psi(x)+\psi(y))\leq \varepsilon.$$

Hence the sequence $(v_n)_{n\in\mathbb N}$ is equicontinuous in $\overline\Omega$ and up to a subsequence, for some $k\leq 1$, $v_n$ converges
to $v_\infty$ solution of
$$\Ppo(D^2v_{\infty})+k\mu v_\infty=0, \quad\mbox{in}\ \Omega,\, v_\infty=0 \quad\mbox{on}\ \partial\Omega.$$
By maximum principle, since $k\mu<\mu^+_1$ this implies that $v_\infty=0$. This is a contradiction since $\|v_\infty\|_\infty=1$.

 We have just proved that there exists some constant $K$ such that $\|w_n\|_\infty\leq K$
and clearly
\begin{equation}\label{induction}
0\leq w_n \leq (\|h\|_\infty+K\mu)\psi.
\end{equation}
Hence, reasoning as above the sequence is also equicontinuous in $\overline\Omega$ and then, up to a subsequence it converges to
a solution $\overline w$ of \eqref{dir1} with $f$ replaced by $h=-f^-$.

\smallskip

Let us consider now the case $f=f^+$.  As above let us define the sequence 
$(w_n)_{n\in\N}$ by setting 
$w_1=0$ and, once $w_n$ is given , solving \eqref{dir3} with $f^+$ in place of $h$. 
In particular $w_{n+1}\leq w_n\leq0$. 
Arguing by contradiction as above and 
applying the global Lipschitz regularity result
(Lemma \ref{Lip-est}) to 
negative functions  
$v_n:=w_n/\|w_n\|_\infty$, 
we observe that the sequence $(w_n)_{n\in\N}$ is bounded in ${\rm C}(\ol\Omega)$. Using 
again the same global Lipschitz estimates to $w_n$, we infer that 
the sequence $(w_n)_{n\in\mathbb N}$ is equi-Lipschitz. Then there is a subsequence converging to a solution $\underline w$ of 
\begin{equation*}
\left\{\begin{array}{cl}
\Ppo(D^2\underline w)+\mu \underline w=f^+ &\mbox{in}\ \Omega\\
\underline w=0 &\mbox{on}\ \partial\Omega.
\end{array}
\right.
\end{equation*} 

Now we assume $\mu<\mu_1^+$ and 
consider general $f$. The above functions $\underline w$ and $\overline w$ are respectively sub and supersolution of \eqref{dir1}. 
To apply the Perron method, we introduce 
\[
\cW=\left\{w\in {\rm C}(\ol\Omega):\;\underline w\leq w\leq\overline w
\;\;\text{ and } w  \text{ supersolution of }\eqref{dir1}\right\}.
\]
and, arguing as in proving the equi-continuity of $(w_n)$ in the case  
$h=-f^-$, 
observe by the local estimates of Lemma \ref{Lip-est} that 
$\cW$ is equi-continuous on $\ol\Omega$. 
%
%
%
Setting 
$$
u(x)=\inf\{w(x):\; w\in\cW \},
$$ 
we get a continuous function on $\ol\Omega$, which solves \erf{dir1} due to the Perron method. 
\end{proof}

\begin{theorem}\label{eigenfunction}
 Let $\Omega$ be a strictly convex domain. Then there exists a  function $\psi_1\in{\rm C}(\overline\Omega)$ such that
\begin{equation}\label{1dir}
\left\{\begin{array}{cl}
\Ppo(D^2\psi_1)+\mu_1^+\psi_1=0,\, \ \psi_1>0 &\mbox{in}\ \Omega\\
\psi_1=0 &\mbox{on}\ \partial\Omega.
\end{array}
\right.
\end{equation}
\end{theorem}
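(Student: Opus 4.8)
The plan is to obtain $\psi_1$ as a normalized limit of solutions of \eqref{dir1} with $f\equiv-1$ and $\mu=\mu_n\uparrow\mu_1^+$. First, $\mu_1^+\in(0,\infty)$: its finiteness is recalled above, and $\mu_1^+>0$ because, picking $R>0$ with $\Omega\subset B_R$, the smooth function $\varphi(x)=2R^2-|x|^2$ is positive on $\Omega$ and satisfies $\Ppo(D^2\varphi)+\mu\varphi=-2+\mu(2R^2-|x|^2)\le-2+2\mu R^2\le0$ on $\Omega$ for every $\mu\le R^{-2}$, so that $\mu_1^+\ge R^{-2}$. Fix $\mu_n\in(0,\mu_1^+)$ with $\mu_n\uparrow\mu_1^+$. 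By Theorem \ref{exi} (the case $\mu<\mu_1^+$), the problem
\[
\Ppo(D^2u_n)+\mu_nu_n=-1\ \text{ in }\Omega,\qquad u_n=0\ \text{ on }\partial\Omega
\]
has a unique solution $u_n\in{\rm C}(\overline\Omega)$; comparing the subsolution $0$ with $u_n$ by Lemma \ref{comp<mu} gives $u_n\ge0$ in $\Omega$, and in fact $u_n>0$ in $\Omega$, since if $u_n(x_0)=0$ at some $x_0\in\Omega$ the constant $0$ touches $u_n$ from below at $x_0$, contradicting the supersolution inequality $0=\Ppo(0)+\mu_nu_n(x_0)\le-1$. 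In particular $\|u_n\|_\infty>0$, so we may set $\psi_n=u_n/\|u_n\|_\infty$; then $\|\psi_n\|_\infty=1$, $\psi_n\ge0$, $\psi_n=0$ on $\partial\Omega$, and $\Ppo(D^2\psi_n)+\mu_n\psi_n=-\varepsilon_n$ with $\varepsilon_n:=\|u_n\|_\infty^{-1}>0$.

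The decisive step is to show $\varepsilon_n\to0$, i.e. $\|u_n\|_\infty\to\infty$. Suppose not; then, along a subsequence, $\|u_n\|_\infty\le M$ for all $n$. Since $u_n>0$ in $\Omega$ and, in the viscosity sense, $\Ppo(D^2u_n)+\mu_nu_n\le-1$, for $\mu:=\mu_n+\tfrac{1}{2M}$ we have $\Ppo(D^2u_n)+\mu u_n\le-1+\tfrac{1}{2M}u_n\le-1+\tfrac{1}{2}\le0$ in $\Omega$; hence the positive function $u_n\in\LSC(\Omega)$ is admissible for the supremum defining $\mu_1^+$ at the level $\mu_n+\tfrac{1}{2M}$, so $\mu_1^+\ge\mu_n+\tfrac{1}{2M}$. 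Letting $n\to\infty$ gives $\mu_1^+\ge\mu_1^++\tfrac{1}{2M}$, a contradiction. Thus $\varepsilon_n\to0$.

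It remains to pass to the limit. The family $(\psi_n)$ is bounded by $1$. Since $\psi_n$ is a supersolution of $\Ppo(D^2\psi_n)=-\varepsilon_n-\mu_n\psi_n$, whose right-hand side has sup-norm $\le C_0:=1+\mu_1^+$ for $n$ large, Lemma \ref{Lip-est} yields, for each $\delta>0$, a Lipschitz bound for $\psi_n$ on $\Omega_\delta=\{x\in\Omega:\dist(x,\partial\Omega)>\delta\}$ that is uniform in $n$. Near $\partial\Omega$, let $\psi$ be the function of \eqref{supersolution} with $\omega=\Omega$, so that $\psi\in{\rm C}(\overline\Omega)$, $\psi=0$ on $\partial\Omega$, $\psi\ge0$ and $\Ppo(D^2\psi)\le-1$ (Theorem \ref{psi}, Lemma \ref{funPsi} with $j=N$); then $C_0\psi$ is a supersolution and $\psi_n$ a subsolution of $\Ppo(D^2v)=-C_0$, and since $\psi_n=0=C_0\psi$ on $\partial\Omega$ and $-C_0<0$, Proposition \ref{g-comp}(ii) (with $b\equiv0$) gives $0\le\psi_n\le C_0\psi$ in $\Omega$; hence $\psi_n\to0$ as $x\to\partial\Omega$, uniformly in $n$. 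Combining the interior estimates with this boundary bound exactly as in the equicontinuity argument in the proof of Theorem \ref{exi}, the family $(\psi_n)$ is bounded and equicontinuous on $\overline\Omega$, so by the Arzel\`a--Ascoli theorem a subsequence converges uniformly to some $\psi_1\in{\rm C}(\overline\Omega)$ with $\|\psi_1\|_\infty=1$, $\psi_1\ge0$ and $\psi_1=0$ on $\partial\Omega$. By the stability of viscosity solutions \cite{CIL} (recalling $\mu_n\to\mu_1^+$ and $\varepsilon_n\to0$), $\Ppo(D^2\psi_1)+\mu_1^+\psi_1=0$ in $\Omega$. Finally, $\psi_1$ is a supersolution of $\Ppo(D^2\psi_1)=-\mu_1^+\psi_1\le0$, hence concave; being nonnegative, not identically zero (as $\|\psi_1\|_\infty=1$) and vanishing on the boundary of the convex set $\Omega$, it is strictly positive in $\Omega$: if $\psi_1(y_0)>0$ and $x\in\Omega$, the segment from $y_0$ through $x$ hits $\partial\Omega$ at a point $z$ with $x=(1-t)y_0+tz$ for some $t\in[0,1)$, so $\psi_1(x)\ge(1-t)\psi_1(y_0)>0$. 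Renaming, this $\psi_1$ satisfies \eqref{1dir}.

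The main obstacle is the blow-up $\|u_n\|_\infty\to\infty$: it is exactly what forces the rescaled limit to solve the \emph{homogeneous} equation at $\mu=\mu_1^+$, and its proof is where the extremal (Berestycki--Nirenberg--Varadhan) character of $\mu_1^+$ enters. A secondary difficulty is obtaining compactness of $(\psi_n)$ \emph{uniformly up to $\partial\Omega$}; this is handled by the barrier $\psi$ of \eqref{supersolution} together with the comparison principle of Proposition \ref{g-comp}(ii). One also uses, as elsewhere in the paper, that a viscosity supersolution of $\Ppo(D^2u)\le0$ is concave, which is what yields strict positivity of the eigenfunction.
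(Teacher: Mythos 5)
Your proof is correct and, in two key places, takes a genuinely different and somewhat more elementary route than the paper's. The paper establishes $\|u_n\|_\infty\to\infty$ by first invoking compactness: it shows $(u_n)$ equicontinuous, extracts a uniform limit $u$ solving $\Ppo(D^2u)+\mu_1^+u=-1$, proves $u>0$, and only then uses the strict inequality $\Ppo(D^2u)+(\mu_1^++\varepsilon)u\leq 0$ to contradict the maximality of $\mu_1^+$. You bypass the limiting step entirely: from the bound $\|u_n\|_\infty\leq M$ you note directly that each $u_n$ itself is a positive $\LSC$ supersolution of $\Ppo(D^2\cdot)+(\mu_n+\tfrac{1}{2M})\cdot\leq 0$, hence admissible in the supremum defining $\mu_1^+$, and letting $n\to\infty$ gives the contradiction $\mu_1^+\geq\mu_1^++\tfrac{1}{2M}$. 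This is cleaner, requires no compactness at this stage, and sidesteps having to prove positivity of the hypothetical limit. For strict positivity of $\psi_1$, the paper appeals to the strong minimum principle of \cite{BGI}, whereas you derive it from the elementary observation that a viscosity supersolution of $\Ppo(D^2\cdot)\leq 0$ is concave (since $\Ppo=\lambda_N$), together with the boundary condition and $\|\psi_1\|_\infty=1$: a nonnegative concave function on a convex domain which is positive somewhere and vanishes on the boundary is positive everywhere inside. Both substitutions are sound; they trade citation of stronger tools for direct arguments that are shorter and self-contained. The remaining parts --- the uniform interior Lipschitz bounds from Lemma~\ref{Lip-est}, the boundary barrier $0\leq\psi_n\leq C_0\psi$ obtained via Proposition~\ref{g-comp}(ii), the Arzel\`a--Ascoli passage to the limit, and viscosity stability --- follow the paper's lines and are correctly assembled.
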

%
\begin{proof}
Let $\mu_n\nearrow\mu^+_1$ and use Theorem \ref{exi} to build $u_n\in {\rm C}(\overline\Omega)$ the solution of 
\begin{equation}\label{dir5}
\left\{\begin{array}{cl}
\Ppo(D^2u_{n})+\mu_n u_n=-1 &\mbox{in}\ \Omega\\
u_{n}=0 &\mbox{on}\ \partial\Omega.
\end{array}
\right.
\end{equation}
\textbf{Step 1.}\\
We claim that, up to some subsequence, $\lim_{n\to+\infty}\left\|u_n\right\|_\infty=+\infty$. Assume by contradiction that $\sup_{n\in\N}\left\|u_n\right\|_\infty<+\infty$. Reasoning as in the proof of  Theorem \ref{exi} the sequence $(u_n)_{n\in \N}$ is bounded and equicontinuous. Hence, up to a subsequence, it converges to a nonnegative solution $u$ of
\begin{equation*}
\left\{\begin{array}{cl}
\Ppo(D^2u)+\mu_1^+ u=-1 &\mbox{in}\ \Omega\\
u=0 &\mbox{on}\ \partial\Omega.
\end{array}
\right.
\end{equation*}
The function $u$ is positive in $\Omega$, otherwise if $\min_{x\in\overline\Omega}u=u(x_0)=0$ and $x_0\in\Omega$, then $\varphi(x)=0$ should be a test function touching $u$ from below in $x_0$ and therefore should satisfy $0\leq-1$, a contradiction.\\
Hence, for small positive $\varepsilon$, we have
$$\Ppo(D^2u)+(\mu_1^++\varepsilon) u\leq0\quad\;\text{in $\Omega$}$$
contradicting the maximality of $\mu_1^+$.

\medskip
\textbf{Step 2.}\\
For $n\in\N$ the functions $v_n=\frac{u_n}{\left\|u_n\right\|_\infty}$ satisfy
\begin{equation}\label{dir6}
\left\{\begin{array}{cl}
\Ppo(D^2v_{n})+\mu_n v_n=\frac{-1}{\left\|u_n\right\|_\infty} &\mbox{in}\ \Omega\\
v_{n}=0 &\mbox{on}\ \partial\Omega
\end{array}
\right.
\end{equation}
and are bounded. Again by equicontinuity, extracting a subsequence if necessary, $(v_n)_{n\in\N}$ converges uniformly to a nonnegative function $\psi_1$ such that $\left\|\psi_1\right\|_\infty=1$. Taking the limit as $n\to\infty$ in (\ref{dir6}) we have  
\begin{equation*}
\left\{\begin{array}{cl}
\Ppo(D^2\psi_1)+\mu_1^+\psi_1=0 &\mbox{in}\ \Omega\\
\psi_1=0 &\mbox{on}\ \partial\Omega.
\end{array}
\right.
\end{equation*}
By the strong minimum principle (\cite[Remark 2.6]{BGI}), we conclude $\psi_1>0$ in $\Omega$ as we wanted to show.
\end{proof}

\section{Influence of the first order term}
We shall see that in the presence of a first order term, strict convexity may not be enough to have existence. And even in the uniformly convex case, if the first-order term is \lq\lq too large\rq\rq\ there may not be existence of solution.

\subsection{Nonexistence results for strictly convex domain} 

Let $\Omega$ be a bounded convex domain in $\R^{N}$ and $k<N$. 
Assume that 
\[
\Omega\subset \{z=(x,y)\in\mathbb R^{N-1}\times\R\mid y>0\}, \quad 0=(0,0)\in\pl\Omega,
\]
and, as $(x,y)\in \pl\Omega$ and $x\to 0$, 
\begin{equation}\label{eq1}
y=o(|x|^2). 
\end{equation}

\bthm \label{bb}Under the hypotheses above, there are no positive supersolutions 
$u\in \operatorname{LSC}(\Omega)$ of 
\[
\cP_k^+(D^2u)+b|Du|\leq 0 \ \ \ \text{ in }\Omega,
\]
where $b$ is a positive constant, with the property $\displaystyle \lim_{\Omega\ni z\to0}u(z)=0$.  
\ethm

\begin{remark}  It is worth to point out, as a consequence of Theorem \ref{bb},  that there are no positive eigenfunctions (with Dirichlet boundary) associated to $\cP_k^+(D^2\cdot)+b|D\cdot|$ if $b>0$. This striking feature is further emphasized by the positivity of the so called \lq\lq generalized principal eigenvalue\rq\rq $\mu^+_k$, at least if  $\Omega\subseteq B_R$ and $bR<k$. In fact $\mu^+_k\geq\frac{2(k-bR)}{R^2}$. This inequality can be easily deduced by considering $v(x)=R^2-|x|^2$ in the definition of $\mu^+_k$.
\end{remark}

\bproof 
 By contradiction  we suppose 
that there is a supersolution $u\in {\rm C}(\ol\Omega)$ of 
\[
\cP_k^+(D^2u)+b|D u|\leq 0 \ \ \ \text{ in }\Omega,
\]
with  $b>0$, such that $\displaystyle \lim_{z\to0}u(z)=0$ and 
\begin{equation}\label{eq2}
u>0 \ \ \ \text{ in }\Omega. 
\end{equation}

We may choose, in view of \eqref{eq1}, a constant $R>0
$ and  a function $g\in {\rm C}^2(\R^{N-1})$ such that 
\[
g(0)=0,\quad D g(0)=0,\quad D^2g(0)=0, 
\]
and
\begin{equation}\label{eq3}
\{(x,y)\in \ol B_R((0,0))\stm \{(0,0)\} \mid \ y\geq g(x)\ \} \ \subset\  \Omega. 
\end{equation}
We may moreover assume that 
\begin{equation}\label{eq4}
k|D^2g(x)|<b \ \ \ \text{ for all  }\ x\in\R^{N-1}, \ \text{ with }|x|<R,
\end{equation}
where $|D^2g(x)|=\max_{i}|\gl_i(D^2g(x))|$. 

By \eqref{eq2} and \eqref{eq3}, we have
\[
\rho:=\min\{u(x,y)\mid (x,y)\in\pl B_R((0,0)),\ y\geq g(x)\}>0.
\]

Set 
\[
\Omega_R=\{(x,y)\in B_R((0,0))\mid y>g(x)\},
\]
and note that
\[\bald
&\ol\Omega_R=\{(x,y)\in\ol B_R((0,0))\mid y\geq g(x)\}\subset \ol\Omega, 
\\&\ol\Omega_R\stm\{(0,0)\}\subset\Omega,
\\&
\pl\Omega_R=\{(x,y)\in\pl B_R((0,0))\mid y\geq g(x)\}\ \cup\ 
\{(x,y)\in\ol B_R((0,0))\mid y=g(x)\}. 
\eald\]

Using $\displaystyle \lim_{z\to0}u(z)=0$, we then may select a point $z_0=(x_0,y_0)\in \Omega_R$ (close to the origin) so that 
\[
u(z_0)<\rho. 
\] 
We may as well choose a function $\theta\in C^2(\R)$ so that 
\[
\theta(0)=0,\quad \theta'(r)>0 \ \ \forall r\in\R, \quad \text{ and }\quad \lim_{r\to+\infty}\theta(r)=\rho.
\]

Let $\varepsilon>0$ and set
\[
\theta_\varepsilon(r)=\theta(r/\varepsilon) \ \ \ \text{ for }\ r\in\R,
\]
and 
\[
\phi_\varepsilon(x,y)=\theta_\varepsilon(y-g(x)) \ \ \ \text{ for }\ (x,y)\in\R^{N}.
\]
Consider the function 
\[
\ol\Omega_R \ni z\ \mapsto \ u(z)-\phi_\varepsilon(z),
\]
and note that, for $z=(x,y)\in \pl\Omega_R$, 
\[
u(z)-\phi_\varepsilon(z)\geq
\bcases
u(z)-\theta_\varepsilon(0)\geq 0-0=0 &\text{ if }\ y=g(x),\\[3pt]
\rho-\phi_\varepsilon(z)>\rho-\rho=0 &\text{ otherwise},
\ecases
\]
and, as $\varepsilon\to 0$,  
\[
u(z_0)-\phi_\varepsilon(z_0)=u(z_0)-\theta\left(\frac{y_0-g(x_0)}{\varepsilon}\right)
\ \to \ u(z_0)-\rho<0. 
\]
We fix $\varepsilon>0$ so that 
\[
u(z_0)-\phi_\varepsilon(z_0)<0,
\]
choose a minimum point $z_\varepsilon=(x_\varepsilon,y_\varepsilon)\in\ol\Omega_R$ 
of $u-\phi_\varepsilon$ and note that 
$u(z_\varepsilon)-\phi_\varepsilon(z_\varepsilon)<0$ and, hence, $z_\varepsilon\in\Omega_R$. 
Thus, by the viscosity property of $u$, we have
\[
\cP_k^+(D^2\phi_\varepsilon(z_\varepsilon))+b|D\phi_\varepsilon(z_\varepsilon)|\leq 0, 
\]
where
\[
D\phi_\varepsilon(x,y)=\theta_{\varepsilon}'(y-g(x))(-Dg(x),\,1), 
\]
and
\[
D^2\phi_\varepsilon(x,y)=\theta_\varepsilon'(y-g(x))\bmat -D^2g(x) & 0\\ 0 & 0\emat
+\theta_\varepsilon''(y-g(x))(-D g(x),\,1)\otim (-D g(x),\, 1). 
\]

Let $\xi\in\R^{N-1}$ and $\eta\in\R$, and compute that
\[
\lan D^2\phi_\varepsilon(x,y)(\xi,\eta),(\xi,\eta)\ran 
=-\theta_\varepsilon'(y-g(x))\lan D^2g(x) \xi,\,\xi\ran
+\theta_\varepsilon''(y-g(x))(-Dg(x) \cdot \xi+\eta)^2.
\]
If $D g(x)=0$, then 
\[\bald
\cP_k^+(D^2\phi_\varepsilon(x,y))&=\sup
\left\{\sum_{i=1}^k\left(-\theta_\varepsilon'(y-g(x))\lan D^2g(x)\xi_i,\xi_i\ran +\theta_\varepsilon''(y-g(x))\eta_i^2\right)\right.\\&\hspace{2cm}\text{ such that }\,\;(\xi_i,\eta_i)\cdot(\xi_j,\eta_j)=\delta_{ij}\Biggr\}.
\eald\]
Taking $\eta_i=0$ and $\xi_i\in\mathbb R^{N-1}$ such that $\xi_i\cdot\xi_j=\delta_{ij}$ for any $i,j=1,\ldots,k$ we get
\[\bald
\cP_k^+(D^2\phi_\varepsilon(x,y))&\geq\sum_{i=1}^k\left(-\theta_\varepsilon'(y-g(x))\lan D^2g(x)\xi_i,\xi_i\ran\right)
\\&
\geq -k\theta_\varepsilon'(y-g(x))|D^2g(x)|. 
\eald\]

Otherwise if $Dg(x)\neq0$, choosing $(\xi_1,\eta_1)=(D g(x),|D g(x)|^2)/\sqrt{|Dg(x)|^2+|D g(x)|^4}$ and $(\xi_2,0),\ldots,(\xi_k,0)$ in such a way 
$\xi_i\cdot\xi_j=\delta_{ij}$ for all $i,j=1,\ldots,k$, we get 
\[\bald
\cP_k^+(D^2\phi_\varepsilon(x,y))
\geq 
\sum_{i=1}^k\left(-\theta_\varepsilon'(y-g(x))\lan D^2g(x)\xi_i,\xi_i\ran\right)
\geq -k\theta_\varepsilon'(y-g(x))|D^2g(x)|. 
\eald\]
Since $|D \phi_\varepsilon(x,y)|=\theta_\varepsilon'(y-g(x))\sqrt{|D g(x)|^2+1}\geq \theta_\varepsilon'(y-g(x))$
and $k|D^2g(x_\varepsilon)|<b$ by \eqref{eq4}, 
we obtain the following contradiction:
\[
0\geq \cP_k^+(D^2\phi_\varepsilon(z_\varepsilon))+b|D \phi_\varepsilon(z_\varepsilon)|
\geq \theta_\varepsilon'(y_\varepsilon-g(x_\varepsilon))(b-k|D^2g(x_\varepsilon)|)>0.
\]
\eproof

\noi

\begin{remark}\label{remark}
The above nonexistence result can be generalized to the nonconstant coefficient case  $b=b(x,y)$ and $b(0,0)=0$ by assuming 
\[
b(x,y)>k|D^2 g(x)|
\]
in a neighbourhood of $(0,0)$. 

 Even in the case where $b$ is constant, we can replace condition \eqref{eq1}
by the condition
\[
b>k|D^2 g(x)|
\]
in a neighbourhood of $(0,0)$. 
\end{remark}

\subsection{Uniformly convex domain with large Hamiltonian}\label{Uniformly convex with large Hamiltonian}
Look at 
\begin{equation}\label{2eq1}
\begin{cases}
\Ppk(D^2u)+b\left|D u\right|=-1 & \text{in $B_R$}\\
u=0 & \text{on $\partial B_R$}.
\end{cases}
\end{equation}
\begin{proposition}\label{nonexistence}
If $0\leq bR<k$, then the problem \eqref{2eq1} has a unique solution which is radial, while if $bR\geq k$ there are no supersolutions.
\end{proposition}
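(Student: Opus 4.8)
\textbf{Proof proposal for Proposition \ref{nonexistence}.}

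The plan is to split into the two regimes $bR<k$ and $bR\ge k$, handling existence/uniqueness in the first and nonexistence in the second, and in both cases to reduce to a one-dimensional ODE via the radial ansatz. First I would compute, for a radial function $u(x)=\phi(|x|)$ with $\phi$ smooth and, say, $\phi'\le 0$, the eigenvalues of $D^2u$: one radial eigenvalue $\phi''(r)$ and the eigenvalue $\phi'(r)/r$ with multiplicity $N-1$. Hence $\Ppk(D^2u)$ is a maximum of $k$ of these $N$ numbers; when $\phi$ is concave with $\phi''\le \phi'/r\le 0$ (the typical situation for the relevant profile, to be checked a posteriori) one gets $\Ppk(D^2u)=(k-1)\frac{\phi'(r)}{r}+\phi''(r)$, and $b|Du|=-b\phi'(r)$ since $\phi'\le 0$. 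So the PDE becomes the linear ODE
\[
\phi''(r)+\frac{k-1}{r}\phi'(r)-b\phi'(r)=-1,\qquad \phi'(0)=0,\ \phi(R)=0,
\]
which is solved explicitly: setting $\chi=\phi'$, $\chi'+(\tfrac{k-1}{r}-b)\chi=-1$ is first order linear, solvable by an integrating factor, with $\chi\le 0$ and hence $\phi$ decreasing, so the a priori sign hypotheses used to identify $\Ppk$ are justified. This produces the candidate radial solution for $bR<k$; one then verifies it is a genuine viscosity solution (it is $C^2$ up to the boundary) and invokes Proposition \ref{g-comp}(i) — whose hypothesis $\|b\|_\infty R\le k$ is exactly $bR\le k$, and strictly $bR<k$ here — to get comparison and therefore uniqueness.

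For the nonexistence part $bR\ge k$, I would argue by contradiction: suppose $u\in\LSC(\ol{B_R})$ is a supersolution of \eqref{2eq1}. As in Proposition \ref{non existence} and Theorem \ref{bb}, $u$ cannot attain an interior minimum (else $0\ge\Ppk(D^2u)+b|Du|=-1$ at that point after touching by a constant), and $u=0$ on $\pl B_R$, so $u>0$ in $B_R$. The natural strategy is to test $u$ against the radial function $\psi(x)=c\,(\text{explicit profile})$ obtained by solving the companion ODE $\phi''+\frac{k-1}{r}\phi'-b\phi'=-1$ on $(0,R)$ but now \emph{without} imposing $\phi(R)=0$: when $bR\ge k$ this profile blows up (or fails to close up) in a way that makes the inequality $\Ppk(D^2\psi)+b|D\psi|\le -1$ incompatible with $u$ being a positive supersolution bounded near the boundary. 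Concretely, one finds an explicit subsolution of \eqref{2eq1} of the form $\underline u(x)=\mu(R^2-|x|^2)$ or a logarithmic/power correction thereof whose sup operator value $\Ppk(D^2\underline u)+b|D\underline u|$ at radius $r$ equals $2\mu\big(k - bR\big)+\text{(positive remainder)}$ — for $bR\ge k$ this can be made $>0=(\text{forcing})+\epsilon$ on all of $B_R$ while $\underline u$ vanishes on $\pl B_R$, so $\underline u$ is a strict subsolution; then comparison (Proposition \ref{g-comp}, noting one of the functions is smooth hence locally Lipschitz) forces $\underline u\le u$, but letting $\mu\to\infty$ contradicts the finiteness of $u$ at the center. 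Alternatively, and more robustly, one slides the barrier $\phi_\varepsilon$ as in the proof of Theorem \ref{bb}, localizing near a boundary point; I expect the clean version to be the quantitative computation $\Ppk+b|D\cdot|$ applied to $R^2-|x|^2$, which immediately yields the sharp threshold.

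The main obstacle I anticipate is twofold. First, justifying the identification $\Ppk(D^2u)=(k-1)\frac{\phi'(r)}{r}+\phi''(r)$ requires knowing the ordering of $\phi''(r)$ versus $\phi'(r)/r$ along the solution profile; this must be read off from the explicit formula for $\chi=\phi'$, and one should check it does not change sign or ordering on $(0,R)$ — a short but necessary monotonicity argument, with the point $r=0$ handled by the $\phi'(0)=0$ regularity so that all $N$ eigenvalues coincide there. Second, and more delicate, is the nonexistence direction at the \emph{borderline} $bR=k$: the naive barrier gives $2\mu(k-bR)=0$, so one needs the next-order term (a $\log$ or the precise behaviour of the integrating factor $r^{k-1}e^{-br}$ near $r=R$) to still produce a strict sign, or else to invoke a limiting/scaling argument; getting the borderline case rather than just $bR>k$ is where care is needed. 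Everything else — existence via the explicit ODE solution, uniqueness via Proposition \ref{g-comp}(i), and the "no interior minimum" observation — is routine.
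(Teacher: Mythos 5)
There is a genuine gap, and it is not a minor one: your radial reduction of $\Ppk$ is wrong, which changes both the ODE and the mechanism behind the $bR\ge k$ threshold.

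For a radial $u(x)=\phi(|x|)$ the eigenvalues of $D^2u$ are $\phi''(r)$ (multiplicity $1$) and $\phi'(r)/r$ (multiplicity $N-1$). You posit $\phi''\le\phi'/r\le 0$; under that ordering the $k$ \emph{largest} eigenvalues (recall $k\le N-1$) are $k$ copies of $\phi'/r$, so $\Ppk(D^2u)=k\,\phi'(r)/r$, not $(k-1)\phi'(r)/r+\phi''(r)$. The formula you wrote is what $\Ppk$ equals when $\phi''\ge\phi'/r$, i.e.\ it is the radial reduction for the \emph{opposite} sign of the gradient term, $\Ppk(D^2u)-b|Du|=-1$ (that is Proposition~\ref{signminus} in the paper, where the second-order ODE with integrating factor $r^{k-1}e^{-br}$ does appear). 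For problem~\eqref{2eq1} the correct reduction, after using $|Du|=-\phi'$, is the \emph{algebraic} relation $k\phi'(r)/r - b\phi'(r)=-1$, i.e.\ $\phi'(r)=\dfrac{-r}{k-br}$, from which the paper's explicit profile $g(r)=\frac{r-R}{b}+\frac{k}{b^2}\log\frac{k-br}{k-bR}$ follows at once, and one then verifies a posteriori that $g''\le g'/r\le 0$. Your second-order ODE produces a smooth function with no singularity as $r\to k/b$, and so it completely obscures the threshold: the whole point of the correct reduction is that $\phi'=\frac{-r}{k-br}$ blows up as $br\to k$, which is exactly what forces $bR<k$ for existence.

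The nonexistence step has a concrete error as well. For $\underline u(x)=\mu(R^2-|x|^2)$ all eigenvalues of $D^2\underline u$ equal $-2\mu$, so $\Ppk(D^2\underline u)+b|D\underline u| = -2\mu k + 2\mu b r = 2\mu(br-k)$, which at $r=0$ equals $-2\mu k$. This is \emph{more negative} for larger $\mu$, so a large $\mu$ gives a supersolution, not a strict subsolution; you cannot let $\mu\to\infty$ while retaining the subsolution inequality $\ge -1$, and no contradiction ensues. The paper's argument is different and sharper: a supersolution $u$ on $B_R$ with $bR\ge k$ is, in particular, positive and a supersolution on every subball $B_{(k-\varepsilon)/b}\subset B_R$ (all $\varepsilon\in(0,k)$), where the explicit radial solution $g_\varepsilon$ satisfies $g_\varepsilon=0$ on the boundary and the comparison principle applies (the effective radius times $b$ is $k-\varepsilon<k$). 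Comparison gives $u\ge g_\varepsilon$ on $B_{(k-\varepsilon)/b}$, and $g_\varepsilon(0)=-\frac{k-\varepsilon}{b^2}+\frac{k}{b^2}\log\frac{k}{\varepsilon}\to+\infty$ as $\varepsilon\to 0$, contradicting the finiteness of $u$ at the center. This also disposes of the borderline $bR=k$ with no extra effort, precisely the case you flagged as delicate. Your overall scaffolding (radial ansatz, use of Proposition~\ref{g-comp} for uniqueness, the no-interior-minimum remark) is sound, but as written the identification of $\Ppk$ and the nonexistence barrier both need to be replaced.
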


The case $bR>k$ is included in Remark \ref{remark}. In the radial setting the proof is in fact much easier and it includes the case $bR=k$. For the convenience of the reader we report the proof.

\begin{proof}
First, thanks to Proposition \ref{g-comp} (ii), since the right hand side \erf{2eq1} is negative, the comparison principle always holds and solutions of \erf{2eq1} are unique.   

We consider the case $b>0$ and $bR<k$. For $r\in[0,R]$ let
\begin{equation}\label{g}
 g(r)=\frac{r-R}{b}+\frac{k}{b^2}\log\frac{k-br}{k-bR}.
\end{equation}
By a straightforward computation one has 
\begin{equation*}
\begin{split}
k\frac{g'(r)}{r}&+b|g'(r)|=-1\\
\frac{g'(r)}{r}&\geq g''(r)\\
g'(0)&=g(R)=0.
\end{split}
\end{equation*}
Hence $u(x)=g(|x|)$ is the solution of 
\begin{equation}\label{2eq2}
\begin{cases}
\Ppk(D^2u)+b\left|D u\right|=-1 & \text{in $B_R$}\\
u=0 & \text{on $\partial B_R$}.
\end{cases}
\end{equation}

Let us assume now that $u$ is a supersolution of \eqref{2eq1} and  $bR\geq k$. 
In particular $u>0$ in $B_R$ and  it is a supersolution too in any ball $B_{\frac{ k-\varepsilon}{b}}\subset B_{R}$ for $\varepsilon\in(0,\,k)$.
Let $\varepsilon\in(0,\,k)$ and set 
$$
g_{\varepsilon}(|x|):=\frac{|x|-\frac{k-\varepsilon}{b}}{b}+\frac{k}{b^2}\log\frac{k-b|x|}{\varepsilon},
$$
which, as we have seen above, 
is the solution of \eqref{2eq2} in $B_{\frac{ k-\varepsilon}{b}}$. Since $$u\geq g_\varepsilon\;\;\;\text{on}\;\;\; \partial B_{\frac{ k-\varepsilon}{b}} \qquad\text{and}\qquad b\frac{ k-\varepsilon}{b}<k$$
the comparison principle yields
$$
u(x)\geq g_\varepsilon(|x|)\qquad \text{ for } x\in B_{\frac{ k-\varepsilon}{b}}.
$$
This leads to a contradiction after letting $\varepsilon\to0$, i.e.
$$
u(x)=\infty\qquad \text{ for all }x\in B_{\frac kR}. 
$$
The function $u(x)=\frac{R^2-|x|^2}{2k}$ is the solution of \eqref{2eq1}, with 
$b=0$.  By a direct computation, one can see that 
the solution $g(|x|)$ of \erf{2eq1} with $b>0$, where $g$ is defined in \eqref{g}), converges 
to $u(x)=\frac{R^2-|x|^2}{2k}$ as $b\to 0$. 
\end{proof}

\begin{corollary} \label{cor}
Let $\Omega$ be a domain such that $B_R\subset\Omega$. Then
\begin{itemize}
	\item if $bR\geq k$ 
  there aro no positive supersolution of 
$$\Ppk(D^2u)+b\left|D u\right|\leq-1\qquad\text{in \;$\Omega$};$$
\item if $bR>k$ there are no $(\mu,\psi(x))\in\mathbb R_+\times\operatorname{LSC}(\Omega)$ such that 
$$
\Ppk(D^2\psi)+b|D\psi|+\mu\psi\leq 0,\;\;\psi>0 \quad \text{in $\Omega$},
$$ 
i.e. $\overline\mu^+_k=\mu^+_k=0$, where
\begin{equation*}
\begin{split}
\mu^+_k&=\sup\left\{\mu\in\R\;:\;\exists \varphi\in \operatorname{LSC}(\Omega),\;\varphi>0,\;\Ppk(D^2\varphi)+b|Du|+\mu\varphi\leq0\;\,\text{in $\Omega$} \right\}\\
\bar\mu^+_k&=\sup\left\{\mu\in\R\;:\;\exists \varphi\in \operatorname{LSC}(\overline\Omega),\;\varphi>0\;\text{in $\overline\Omega$},\;\Ppk(D^2\varphi)+b|Du|+\mu\varphi\leq0\;\,\text{in $\Omega$} \right\}.
\end{split}
\end{equation*}
\end{itemize}
\end{corollary}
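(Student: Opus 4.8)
The plan is to deduce both items from Proposition \ref{nonexistence} together with the classical localization principle: a supersolution of a degenerate elliptic inequality on $\Omega$ restricts to a supersolution on any subdomain, in particular on $B_R\subset\Omega$. For the first bullet, suppose $u\in\LSC(\Omega)$ is a positive supersolution of $\Ppk(D^2u)+b|Du|\leq -1$ in $\Omega$ with $bR\geq k$. Then $u$ is a (positive, hence lower-bounded away from zero on each compact subset) supersolution of the same inequality in $B_R$. I would invoke the second half of Proposition \ref{nonexistence}, whose proof only uses that $u$ is a positive supersolution on $B_R$ with $bR\geq k$, to derive a contradiction; the only mild point to check is that Proposition \ref{nonexistence} is stated for supersolutions of the \emph{equation} $=-1$ but its nonexistence half really concerns supersolutions of the inequality $\leq -1$, so it applies verbatim. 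This gives the first item.

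For the second bullet, assume $bR>k$ and suppose there exist $\mu>0$ and $\psi\in\LSC(\Omega)$ with $\psi>0$ and $\Ppk(D^2\psi)+b|D\psi|+\mu\psi\leq 0$ in $\Omega$. Restricting to $B_R$ and using positivity of $\psi$, I would like to absorb the zeroth-order term and reduce to the inequality $\Ppk(D^2\psi)+b'|D\psi|\leq -1$ on a slightly smaller ball for some $b'$ with $b'R'\geq k$, after which the first item applies. The cleanest route is a scaling/rescaling argument: since $bR>k$, pick $R'<R$ still with $bR'>k$, and on $B_{R'}$ the quantity $\mu\psi$ is a nonnegative continuous-from-below function; one shows $\psi$ is a positive supersolution of $\Ppk(D^2u)+b|Du|\leq 0$ there, then rescales by a large constant to turn the right-hand side into $\le -1$ — but homogeneity of $\Ppk$ and of $|D\cdot|$ means multiplying $\psi$ by a constant only rescales the $\le 0$ inequality to itself, so instead I would argue directly: on $B_{R'}$ we have $\Ppk(D^2\psi)+b|D\psi|\le -\mu\psi\le 0$, i.e. $\psi$ is a positive supersolution of $\Ppk(D^2u)+b|Du|=0$ on a ball of radius $R'$ with $bR'>k$, and then compare $\psi$ against the explicit subsolutions $g_\varepsilon$ from the proof of Proposition \ref{nonexistence} (which solve the $=-1$ problem on sub-balls of radius $\tfrac{k-\varepsilon}{b}<R'$): since $\psi\ge \min_{\pl B_{(k-\varepsilon)/b}}\psi\cdot\big(\text{something}\big)$... more simply, $\psi$ bounded below by a positive constant on $\pl B_{(k-\varepsilon)/b}$ dominates $c\,g_\varepsilon$ there for small $c>0$, and $c\,g_\varepsilon$ is a subsolution of $\Ppk(D^2u)+b|Du|\le 0$, so the comparison principle of Proposition \ref{g-comp}(i) (applicable since $b\cdot\tfrac{k-\varepsilon}{b}=k-\varepsilon<k$) forces $\psi\ge c\,g_\varepsilon$ on $B_{(k-\varepsilon)/b}$, and letting $\varepsilon\to 0$ gives $\psi=+\infty$ on $B_{k/b}\subset B_{R'}$, a contradiction. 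Hence no such $(\mu,\psi)$ exists, i.e. $\mu^+_k\le 0$; combined with $\mu^+_k\ge 0$ (take $\mu=0$ and recall a positive supersolution of $\Ppk(D^2u)+b|Du|\le 0$ exists, e.g. any positive constant), we get $\mu^+_k=0$, and since $\overline\mu^+_k\le\mu^+_k$ trivially while $\overline\mu^+_k\ge 0$ by the same constant test function, $\overline\mu^+_k=\mu^+_k=0$.

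The main obstacle I anticipate is the bookkeeping in the second item: correctly passing from the eigenvalue-type inequality with $\mu\psi$ to a clean first-order inequality on a ball of the right radius, and in particular checking that the hypotheses of the comparison principle Proposition \ref{g-comp}(i) are met (the Lipschitz regularity of one of the competitors — here $c\,g_\varepsilon$ is smooth, so this is fine — and the radius condition $\|b\|_\infty R'\le k$, which is why one works on $B_{(k-\varepsilon)/b}$ rather than all of $B_{R'}$). Everything else is a direct citation of Proposition \ref{nonexistence} and the localization of the viscosity supersolution property, which is routine. I would also remark, as the paragraph after the corollary already notes, that the $bR>k$ case of the first item overlaps with Remark \ref{remark}, but the radial comparison argument above has the advantage of also covering $bR=k$.
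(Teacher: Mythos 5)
Your proof of the first bullet is exactly the paper's: cite the nonexistence half of Proposition~\ref{nonexistence}, observing that its argument only needs a positive supersolution on $B_R$ and never uses the boundary condition. For the second bullet you and the paper diverge. The paper's route is a clean reduction to the first bullet: set $\rho=k/b<R$, note $\overline B_\rho\subset\Omega$ so that $m:=\min_{\overline B_\rho}\psi>0$ by lower semicontinuity, and then $M\psi$ with $M\ge(\mu m)^{-1}$ satisfies $\cP_k^+(D^2(M\psi))+b|D(M\psi)|\le -M\mu\psi\le -1$ on $B_\rho$ with $b\rho=k$, contradicting the first item. You considered this scaling but abandoned it because you first passed from $\le-\mu\psi$ to $\le 0$, at which point homogeneity does indeed make scaling useless; the trick is to keep the $-\mu\psi$ term and exploit that $\psi$ has a positive minimum on the compact subball. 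Your replacement argument — discarding the zeroth-order term, treating $\psi$ as a supersolution of $\cP_k^+(D^2u)+b|Du|=0$ on a ball of radius $>k/b$, and comparing against $cg_\varepsilon$ via Proposition~\ref{g-comp}(i) — does work: $cg_\varepsilon$ is a smooth subsolution of that equation, it vanishes on $\partial B_{(k-\varepsilon)/b}$ where $\psi>0$, comparison applies since $b\cdot(k-\varepsilon)/b<k$, and $g_\varepsilon(r)\to\infty$ pointwise as $\varepsilon\to 0$, forcing $\psi\equiv+\infty$. But this in effect re-derives the contents of the first bullet inside the proof of the second; the paper's version is shorter and shows why the two items are stated together. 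Your closing observations about why $\mu_k^+=\bar\mu_k^+=0$ (constants as test functions for $\mu=0$, and $\bar\mu_k^+\le\mu_k^+$) are correct and merely make explicit what the paper leaves implicit.
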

\begin{proof}
The first part directly follows from Proposition \ref{nonexistence}.\\
Assume now by contradiction that there  exist $\mu>0$, $\psi(x)>0$ in $\Omega$ such that 
$$
\Ppk(D^2\psi)+b|D \psi|+\mu\psi\leq 0 \quad \text{in $\Omega$}.
$$
Let $\rho=\frac{k}{b}<R$. Then $B_{\rho}\Subset\Omega$ and $\displaystyle\min_{\overline B_\rho}\psi>0$. Taking $M$ sufficiently large we can guarantee that $u=M\psi$ satisfies
$$
\Ppk(D^2u)+b|D u|\leq-1\quad\text{in $B_\rho$}
$$
which is not possible since $b\rho=k$.
\end{proof}

Now we consider the equation
\begin{equation}\label{eq99}
\Ppk(D^2u)-b|D u|=-1\quad \text{in $B_R$}
\end{equation}
with any $b>0$. 

\begin{proposition} \label{signminus} There exists a unique solution $u\in {\rm C}(\ol B_R)$ of  the Dirichlet problem for \erf{eq99}, with boundary condition $u=0$ on $\pl B_R$.
The solution $u$ is radial. 
\end{proposition}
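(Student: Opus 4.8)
The strategy is to produce the solution \emph{by hand} as a radial function and then obtain uniqueness from the comparison principle already at our disposal. First I would look for $u(x)=g(|x|)$ with $g\in {\rm C}^2([0,R])$, $g'(0)=0$ and $g(R)=0$; solving the relevant ODE will automatically make $g'\le 0$ on $[0,R]$. Away from the origin, $D^2u(x)$ has the eigenvalue $g''(|x|)$ in the radial direction and $g'(|x|)/|x|$ with multiplicity $N-1$ in the tangential directions, while $|Du(x)|=-g'(|x|)$. If it turns out that $g''(r)\ge g'(r)/r$, then the $k$ largest eigenvalues are $g''$ together with $k-1$ copies of $g'/r$, and \eqref{eq99} reduces to the linear ODE
\[
g''(r)+\Big(\frac{k-1}{r}+b\Big)g'(r)=-1\quad\text{on }(0,R],\qquad g'(0)=0,\quad g(R)=0
\]
(at $r=0$ one has $D^2u(0)=g''(0)I$ and the equation reads $kg''(0)=-1$, consistent with the value $g''(0)=-1/k$ obtained below).

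Next I would solve this ODE. Using the integrating factor $r^{k-1}e^{br}$ for $h:=g'$ yields
\[
g'(r)=-\,r^{1-k}e^{-br}\int_0^r s^{k-1}e^{bs}\,ds<0\quad(0<r\le R),\qquad g'(r)\sim-\tfrac rk\to0\ \text{ as }r\to0,
\]
and then $g(r)=\int_r^R\bigl(-g'(s)\bigr)\,ds>0$ for $0\le r<R$, $g(R)=0$. A short Taylor expansion of the integral shows that $g$ is smooth on $[0,R]$ with $g''(0)=-1/k$, so $u(x)=g(|x|)$ belongs to ${\rm C}^2(\ol B_R)$, is positive in $B_R$ and vanishes on $\pl B_R$.

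The one point that is not automatic is the consistency requirement $g''(r)\ge g'(r)/r$, and this is the main obstacle. From the explicit formula one computes
\[
g''(r)-\frac{g'(r)}{r}=\frac{1}{r^{k}e^{br}}\,G(r),\qquad G(r):=(k+br)\int_0^r s^{k-1}e^{bs}\,ds-r^ke^{br}.
\]
Since $G(0)=0$ and, after cancellation, $G'(r)=b\int_0^r s^{k-1}e^{bs}\,ds>0$ for $r>0$, we get $G>0$ on $(0,R]$, hence $g''(r)>g'(r)/r$ there. Therefore $u$ solves \eqref{eq99} classically at every point of $B_R$ (including the origin, by the remark above); being ${\rm C}^2$ it is in particular a viscosity solution, and it is radial. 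This settles existence. Care is also needed for the regularity of $g$, equivalently $u$, at the origin, where the term $\frac{k-1}{r}g'$ is singular but $g'\sim-r/k$ keeps everything bounded.

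For uniqueness I would write \eqref{eq99} as $\Ppk(D^2u)+b(x)|Du|=-1$ with $b(x)\equiv-b$, which is constant, hence locally Lipschitz, and apply Proposition \ref{g-comp}(ii): since the forcing term $-1$ is negative, any subsolution $v\in\USC(\ol B_R)$ and supersolution $w\in\LSC(\ol B_R)$ of \eqref{eq99} with $v\le w$ on $\pl B_R$ satisfy $v\le w$ in $B_R$. In particular the Dirichlet problem for \eqref{eq99} with zero boundary data has at most one solution in ${\rm C}(\ol B_R)$, so it must coincide with the radial solution constructed above.
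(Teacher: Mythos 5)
Your proposal is correct and follows essentially the same route as the paper: reduce to the radial ODE $g''+\frac{k-1}{r}g'+bg'=-1$ under the ansatz $g''\ge g'/r$, solve by the integrating factor $r^{k-1}e^{br}$, verify the consistency inequality through the auxiliary function $G(r)=(k+br)\int_0^r s^{k-1}e^{bs}\,ds-r^ke^{br}$ (the paper's $a(r)$) using $G(0)=0$ and $G'(r)=b\int_0^r s^{k-1}e^{bs}\,ds\ge 0$, and obtain uniqueness from the comparison principle of Proposition~\ref{g-comp}(ii). The only addition you make is a more explicit discussion of $C^2$ regularity of $u$ at the origin via the Taylor expansion of $g'$, which the paper leaves implicit.
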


\bproof  The uniqueness is a consequence of the comparison principle 
(Proposition \ref{g-comp} (ii)). 

The presence of the sign minus in front of $b$  
leads us to look for radial solutions $u(x)=g(|x|)$ of \eqref{eq99} with $g=g(r)$ 
solution of 
\begin{equation}\label{pr1}
\begin{cases}
g''(r)+(k-1)\frac{g'(r)}{r}+bg'(r)=-1 & \text{ for }r\in(0,R]\\
g'\leq0 &\text{ for } r\in[0,R]\\
g''(r)\geq\frac{g'(r)}{r} & \text{ for } r\in(0,R]\\
g'(0)=g(R)=0. &
\end{cases}
\end{equation}
For solving this,  consider the first order problem 
\begin{equation}\label{pr2}
\begin{cases}
h'(r)+(k-1)\frac{h(r)}{r}+bh(r)=-1 & \text{ for }r\in(0,R]\\
h(0)=0
\end{cases}
\end{equation}
whose solution is $$
h(r)=-\frac{e^{-br}}{r^{k-1}}\int_0^re^{bs}s^{k-1}\,ds.
$$
It is clear that 
\begin{equation}\label{eq100}
h(r)<0\quad\text{ for } r\in(0,R]\quad\text{and}\quad  \lim_{r\to0}h(r)=0.
\end{equation}
Moreover by \eqref{pr2} one has 
\begin{equation}\label{eq101}
h'(r)\geq\frac{h(r)}{r}\quad\Longleftrightarrow\quad a(r):=\left(k+br\right)\int_0^re^{bs}s^{k-1}\,ds-r^k e^{br}\geq0\quad\text{ for }r\in(0,R].
\end{equation}
Since $a'(r)=b\int_0^re^{bs}s^{k-1}\,ds\geq0$ and $a(0)=0$, then the inequality on the left hand side of \eqref{eq101} holds true.
Using now \eqref{pr2}--\eqref{eq101} we deduce that  
$$
g(r)=\int_r^R\frac{e^{-bs}}{s^{k-1}}\left(\int_0^se^{bt}t^{k-1}\,dt\right)\,ds
$$
is a solution of \eqref{pr1}, and $u(x)=g(|x|)$ is in turn a solution of \eqref{eq99} such $u=0$ on $\partial\Omega$. 
\end{proof}

\subsection{{Case} $bR=k$ with $\Omega=B_R$}\label{diff. eigenvalues}
For $\mu>0$ consider
\begin{equation}\label{subeq1}
\begin{cases}
\Ppk(D^2u)+\frac kR|D u|+\mu u=0,\;\;u>0 & \text{in $B_R$}\\
u=0 & \text{on $\partial B_R$}.
\end{cases}
\end{equation}
Consider moreover the ODE
\begin{equation}
\begin{cases}
k\left(\frac1r-\frac1R\right)\varphi'(r)+\mu\varphi(r)=0& \text{for $r\in(0,R)$}\\
\varphi(0)=a>0, \;\varphi(R)=0,
\end{cases}
\end{equation}
where $a$ is a constant. 
By computations, the solution $\gf=\gf_{\mu,a}$ is given by 
$$
\varphi_{\mu,a}(r)=a{\left(1-\frac rR\right)}^{\frac{\mu R^2}{k}}\exp\left(\frac{\mu R}{k}r\right)
$$
and 
\begin{equation}\label{subeq2}
\varphi_{\mu,a}'(r)<0 \qquad \text{for any $r\in(0,R)$}.
\end{equation}
If in addition 
$$
\mu\leq\frac{k}{R^2}
$$
then
\begin{equation}\label{subeq3}
\frac{\varphi_{\mu,a}'(r)}{r}\geq\varphi_{\mu,a}''(r) \qquad \text{for all $r\in(0,R)$}.
\end{equation}
Combining \eqref{subeq2}--\eqref{subeq3} we deduce that $u_{\mu,a}(x)=\varphi_{\mu,a}(|x|)$ satisfies for any $
\mu\leq\frac{k}{R^2}
$
\begin{equation*}
\Ppk(D^2u_{\mu,a})+\frac kR\left|D u_{\mu,a}\right|+\mu u_{\mu,a}=0 \qquad\text{for $0<|x|<R$}.
\end{equation*}
Moreover by direct computation
$$
D u_{\mu,a}(0)=0,\;\;D^2u_{\mu,a}(0)=-\frac\mu k u_{\mu,a}(0) I,
$$
hence
$$
\begin{cases}
\Ppk(D^2u_{\mu,a}(x))+\frac kR\left|Du_{\mu,a}(x)\right|+\mu u_{\mu,a}(x)=0, \;\;u_{\mu,a}>0 & \text{in $B_R$}\\
u_{\mu,a}=0 & \text{on $\partial B_R$}.
\end{cases}
$$
In particular 
$$
\mu^+_k\geq\frac{k}{R^2},
$$
while, since the maximum principle is violated, we deduce by \cite[Theorem 4.1]{BGI} that 
$$
\overline \mu^+_k=0.
$$
This shows that the equality $\overline \mu^+_k=\mu^+_k$, which holds when $bR<k$, see \cite[Theorem 4.4]{BGI}, may fails as soon as $bR=k$.

\begin{remark}
\rm
Note that 
$\mu^+_k$ is finite since it is bounded from above by the principal eigenvalue of the operator $\Delta\cdot+\frac kR|D\cdot|$.
\end{remark}

\section{More on the weight of the first order problem.}\label{radial weight}
Let us consider the problem
\begin{equation}\label{3eq1}
\begin{cases}
\Ppk(D^2u)+b(r)\left|Du\right|=-1 & \text{in $B_R$}\\
u=0 & \text{on $\partial B_R$}
\end{cases}
\end{equation}
where  $b\in {\rm C}([0,R])\cap C^1(0,R)$ is  a radial function. We aim to generalize the existence results of subsection \ref{Uniformly convex with large Hamiltonian} to this setting and, at least in a model case, see \eqref{H1},  we shall analyze how the solutions of \eqref{3eq1} are affected by the monotonicity changes of $b(r)$. Having in mind the case $b$ constant, roughly speaking  a transition from $b$ negative to $b$ positive force the solutions $u$ to solve  a second order initial value problem near the origin, then a first order boundary value problem.  


Concerning $b(r)$  we  assume that there exists $r_0\in[0,R]$ such that 
\begin{equation}\label{H1}
(r-r_0) (rb(r))'\geq0\quad\text{ for }r\in(0,R).
\end{equation}

Note that if $r_0=0$ or $r_0=R$ then \eqref{H1} reduces respectively to 
the cases $(rb(r))'\geq0$ or $(rb(r))'\leq0$ in $(0,R)$, i.e. the constant sign case of $b(r)$.

\textbf{Definition of ${R_0}$.}  We define $R_0\in (0,\,R]$ as follows. \\ 
If $rb(r)<k$ for any $r<R$ then $R_0:=R$.\\
If there exists $r\in(0,R)$ such that $rb(r)=k$ then $R_0:=\inf\left\{r<R\,:\,rb(r)=k\right\}$.

The above definition of $R_0$ makes sense for any $b\in {\rm C}([0,\,R])$ since 
$rb(r)<k$ holds for $r=0$. 

\begin{remark}
{\rm If $b$ is a positive constant then $R_0=\min\left\{\frac kb,R\right\}$.}
\end{remark}

\begin{proposition}\label{3prop1}
Assume condition \eqref{H1}. If 
\begin{equation}\label{3eq2}
\int^{R_0}\frac{r}{k-rb(r)}\,dr<+\infty
\end{equation} 
then $R_0=R$ and problem \eqref{3eq1} has a unique solution, which is radial. On the other hand, if 
\begin{equation}\label{3eq3}
\int^{R_0}\frac{r}{k-rb(r)}\,dr=+\infty
\end{equation} 
then no supersolutions of \eqref{3eq1} exist in $B_{R_0}$. 
\end{proposition}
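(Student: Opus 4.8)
The plan is to treat, in turn, the equivalence between \eqref{3eq2} and $R_0=R$, the construction of a radial solution, its uniqueness and symmetry, and finally the nonexistence statement. Throughout set $\phi(r)=rb(r)$, so that \eqref{H1} says $\phi$ is nonincreasing on $(0,r_0)$, nondecreasing on $(r_0,R)$, with $\phi(0)=0$; and by the definition of $R_0$ one has $\phi<k$ on $[0,R_0)$, with $\phi(R_0)=k$ whenever $R_0<R$. If $R_0<R$, then $\phi(0)=0<k=\phi(R_0)$ forces $r_0<R_0$ and $\phi$ nondecreasing near $R_0$; since $\phi\in C^1(0,R)$, $\phi'$ is bounded on a left neighbourhood of $R_0$, whence $k-\phi(r)=\int_r^{R_0}\phi'\le C(R_0-r)$ and $\int^{R_0}\frac r{k-rb(r)}\,dr=+\infty$. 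Thus \eqref{3eq2} implies $R_0=R$, and it remains to prove the two alternatives under \eqref{3eq2} and under \eqref{3eq3} respectively.

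For existence, assume \eqref{3eq2}, so $R_0=R$ and $\phi<k$ on $[0,R)$. For $u(x)=g(|x|)$ the Hessian $D^2u$ has the simple eigenvalue $g''(r)$ and the $(N-1)$-fold eigenvalue $g'(r)/r$, and since $k\le N-1$ one has $\Ppk(D^2u)=\max\{g''(r)+(k-1)g'(r)/r,\ kg'(r)/r\}$. I would build the profile $g$ in two branches. On an outer interval $(\rho,R)$ I impose the first-order relation $kg'/r-bg'=-1$, i.e. $g'(r)=-r/(k-\phi(r))$; with $g(R)=0$ this gives $g(r)=\int_r^R s/(k-\phi(s))\,ds$, finite and continuous on $[\rho,R]$ precisely by \eqref{3eq2}. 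On an inner interval $(0,\rho)$ I impose the second-order equation $g''+(k-1)g'/r-bg'=-1$; for $p=g'$ this is the linear ODE $p'+(\tfrac{k-1}r-b)p=-1$, which near the regular singular point $r=0$ has a unique bounded solution $p_A$ with $p_A(0)=0$, $p_A(r)=-r/k+o(r)$, and $p_A<0$ on $(0,\rho]$ (at a first zero $p_A'=-1<0$, a contradiction). Writing $q(r)=g''-g'/r$ along the inner branch, one computes $q=-1-p_A(k-\phi)/r$ and $(\mu q)'=\mu\,p_A\phi'/r$ with $\mu(r)=r^k\exp(-\int b)>0$; since $\mu q\to0$ at $0$ (as $q\to0$ and $\mu\to0$) and $\phi'\le0$ on $(0,r_0)$, the function $\mu q$ is nondecreasing there, so $q\ge0$ — equivalently $p_A\le-r/(k-\phi)$ — on $(0,r_0)$, while $\phi'\ge0$ makes $\mu q$ nonincreasing on $(r_0,R)$. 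Hence $q$ changes sign at most once; let $\rho\in[r_0,R]$ be that sign-change point (with $\rho=R$ if $q\ge0$ throughout). At $\rho<R$, $q(\rho)=0$ reads $p_A(\rho)=-\rho/(k-\phi(\rho))$, the outer value, so fixing the inner additive constant accordingly gives $g\in C^1([0,R])$ with $g(R)=0$, $g'(0)=0$, $g'<0$, and $g\in C^2$ off $|x|=\rho$. This $u=g(|x|)$ solves \eqref{3eq1} in the viscosity sense: on $(\rho,R)$, $\phi'\ge0$ gives $g''\le g'/r$, so $\Ppk(D^2u)=kg'/r$ and the equation holds classically; on $(0,\rho)$, $q\ge0$ gives $\Ppk(D^2u)=g''+(k-1)g'/r$ together with $kg'/r-bg'\le-1$, so the equation again holds classically; at $x=0$, $D^2u(0)=-\tfrac1kI$ works; and across $|x|=\rho$, where $u$ is merely $C^{1,1}$, one uses the standard fact that a $C^{1,1}$ function which is a classical sub/supersolution wherever it is twice differentiable is a viscosity sub/supersolution of a degenerate elliptic equation. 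Hence $u\in{\rm C}(\overline{B_R})$ solves \eqref{3eq1}.

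Uniqueness and radial symmetry follow from comparison: since $f\equiv-1<0$, Proposition \ref{g-comp}(ii) applies to $\Ppk(D^2\cdot)+b(x)|D\cdot|=-1$, and as the solution $u=g(|x|)$ above is locally Lipschitz in $B_R$, comparing any solution $v$ of \eqref{3eq1} with $u$ both ways gives $v=u$; applied to $v(x)=u(Ox)$ with $O$ a rotation, this shows $u$ is radial. For the nonexistence statement, assume \eqref{3eq3}; then $r_0<R_0$, since otherwise $\phi\le0$ on $(0,R_0)$ and the integral would converge. The inner branch $p_A$ and the function $q$ do not depend on the radius of the ball, and $q(r)=-1-p_A(r)(k-\phi(r))/r\to-1<0$ as $r\to R_0$, because $k-\phi\to0$ there while $p_A$ stays bounded (the coefficient $\tfrac{k-1}r-b$ being continuous up to $R_0$); hence $q$ has a sign-change point $\rho\in(r_0,R_0)$ with $\rho<R_0$. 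For $0<\varepsilon<R_0-\rho$, the construction of the previous paragraph run on $B_{R_0-\varepsilon}$ (where the integrability is trivial) produces the unique radial solution $g_\varepsilon$, with the same switch radius $\rho$ and $g_\varepsilon(r)=\int_r^{R_0-\varepsilon}s/(k-\phi(s))\,ds$ on $[\rho,R_0-\varepsilon]$; therefore $g_\varepsilon(\rho)\to\int_\rho^{R_0}s/(k-\phi(s))\,ds=+\infty$ as $\varepsilon\to0$, by \eqref{3eq3} (the divergence being concentrated at $R_0$). If $u\in\LSC(\overline{B_{R_0}})$ were a supersolution of \eqref{3eq1} on $B_{R_0}$, then $u\ge0$ (no interior minimum, $u=0$ on $\partial B_{R_0}$) and Proposition \ref{g-comp}(ii) would give $u\ge g_\varepsilon$ on $B_{R_0-\varepsilon}$, forcing $u\equiv+\infty$ on $\partial B_\rho$ — impossible.

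The main obstacle is the existence construction: locating the free boundary $\rho$ separating the second-order phase near the origin (forced when $\phi$ decreases) from the first-order phase near $\partial B_R$ (when $\phi$ increases), establishing the sign of $g'$ and of $q=g''-g'/r$ that select which eigenvalue of the radial Hessian is picked up by $\Ppk$, and checking both the $C^1$ matching at $\rho$ and that the glued function is a genuine viscosity solution there. Hypothesis \eqref{H1} is exactly what guarantees a single such switch, and it is this single switch, together with the behaviour of $\int^{R_0}\frac{r}{k-rb(r)}\,dr$ at $R_0$, that drives the dichotomy in the conclusion.
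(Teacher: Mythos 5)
Your argument is correct and essentially reproduces the paper's proof: the same two-branch radial construction with a unique switch radius forced by \eqref{H1}, the same integrating-factor identity (your $\mu q$ with $\mu=r^ke^{-B}$ and $q=g''-g'/r$ is exactly the paper's auxiliary function $a(r)=(k-rb(r))\int_0^r e^{-B(s)}s^{k-1}\,ds-e^{-B(r)}r^k$), the same comparison-based uniqueness and radial symmetry, and the same blow-up-by-comparison mechanism for nonexistence. The only deviations are cosmetic: at the switch radius you invoke the general fact that a $C^{1,1}$ function solving a degenerate elliptic equation a.e.\ is a viscosity solution (which rests on Jensen's lemma), whereas the paper gives a short self-contained check there by observing that $g_1\geq g_2$ near $\bar r$ so a test function touching from above must touch the outer branch and one touching from below must touch the inner branch; and in the nonexistence part you compare against the full solution $g_\varepsilon$ on $B_{R_0-\varepsilon}$ rather than the paper's lighter explicit strict subsolution $(1-\varepsilon)\int_{|x|}^{R_0-\varepsilon}\frac{s}{k-sb(s)}\,ds$, both of which lead to the same contradiction.
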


\begin{proof}
First we assume condition \eqref{3eq2}. By contradiction let us assume that $R_0<R$. Since $rb(r)\in C^1$ then there 
exists positive $M$ such that 
$$
k+M(r-R_0)\leq rb(r) \quad\text{in}\quad [R_0/2,\,R_0]. 
$$
This would imply
$$
\int^{R_0}\frac{r}{k-rb(r)}\,dr\geq\int^{R_0}\frac{r}{M(R_0-r)}\,dr=+\infty,
$$
contradiction. Hence $R_0=R$.

As usual, the uniqueness follows from the comparison principle. 

\smallskip
\textbf{Case $r_0\in(0,R)$.}\\
We start by looking for a radial solution $u(x)=g_1(|x|)$ with $g_1=g_1(r)$ solution of 
\begin{equation}\label{eq199}
\begin{cases}
g_1''(r)+(k-1)\frac{g_1'(r)}{r}-bg_1'(r)=-1 \\
g_1'\leq0 \\
g_1''(r)\geq\frac{g_1'(r)}{r} \\
g_1'(0)=0
\end{cases}
\end{equation}
in a neighbourhood of zero.
This leads us to consider the following first order problem, $h_1=g'_1$,
\begin{equation}\label{eq200}
\begin{cases}
h_1'(r)+(k-1)\frac{h_1(r)}{r}-bh_1(r)=-1 \\
h_1'(r)\geq\frac{h_1(r)}{r}\\
h_1(r)\leq0\\
h_1(0)=0.
\end{cases}
\end{equation}
As in the proof of Proposition \ref{signminus}, the function
\begin{equation}\label{eq204}
h_1(r)=-\frac{e^{B(r)}}{r^{k-1}}\int_0^r e^{-B(s)}s^{k-1}\,ds
\end{equation}
where $B'=b$, satisfies \eqref{eq200} and $h_1'\geq\frac{h_1}{r}$
in an interval $[0,\,c]$ 
provided
\begin{equation}\label{eq201}
a(r):=\left(k-rb(r)\right)\int_0^r e^{-B(s)}s^{k-1}\,ds-e^{-B(r)}r^k\geq0 \ \ \text{ in }
[0,\,c]. 
\end{equation}
Since $a(0)=0$ and 
$$
a'(r)=-(rb(r))'\int_0^r e^{-B(s)}s^{k-1}\,ds\geq0\quad\text{in $[0,\,r_0]$},
$$
then \eqref{eq201} holds for any $r\in[0,\, r_0]$.\\
Now if $a(r)\geq0$ in $[0,R]$, then $h_1$ is a global solution of \eqref{eq200} and 
\begin{equation}\label{eq202}
g_1(r)=-\int_r^Rh_1(s)\,ds
\end{equation}
is the solution \eqref{eq199} in $[0,R]$ satisfying $g_1(R)=0$.\\
If otherwise there exists $\bar r\in[r_0,\,R)$ such that 
\begin{equation}\label{eq205}
a(\bar r)=0 \quad\text{and}\quad a(r)<0 \quad\text{in}\quad [\bar r,R] 
\end{equation}
then the function 
\begin{equation}\label{eq208}
g_2(r)=\int_r^R\frac{s}{k-sb(s)}\,ds
\end{equation}
is well defined by \eqref{3eq2} and it is a solution of
\begin{equation}\label{3eq4}
\begin{cases}
k\frac{g_2'(r)}{r}-b(r)g_2'(r)=-1 & \text{ for } r\in(\bar r,R)\\
g_2'(r)\leq0& \text{ for } r\in(\bar r,R)\\
g_2(R)=0.
\end{cases}
\end{equation}
Moreover, using \eqref{H1}, one has
\begin{equation}\label{eq206}
 g_2''(r)=\frac{g_2'(r)}{r}-\frac{r}{(k-rb(r))^2}(rb(r))'\leq\frac{g_2'(r)}{r}\qquad 
\text{ for } r\in(\bar r,R).
\end{equation}
Let us define 
\begin{equation}\label{eq203}
g(r)=\begin{cases}
g_1(r) & \text{for $r\in[0,\bar r]$}\\
g_2(r) & \text{for $r\in(\bar r, R]$},
\end{cases}
\end{equation}
where $g_1(r)=-\int_r^{\bar r}h_1(s)\,ds+g_2(\bar r)$ and $h_1$ is defined by \eqref{eq204}. By \eqref{eq205} $g(r)\in C^1([0,R])\cap C^2([0,R]\backslash\left\{\bar r\right\})$. We claim that $u(x)=g(|x|)$ is solution of \eqref{3eq1}. Clearly it is a classical solution for any $x\in B_R$ such that $|x|\neq\bar r$.
Moreover note that if $\left.(rb(r))'\right|_{r=\bar r}=0$ then $u(x)$ is in fact  $C^2(B_R)$. So we may assume that $\left.(rb(r))'\right|_{r=\bar r}>0$, hence by construction the only points $x$ that we need to consider are those for which $|x|=\bar r$. Fix such $x_0\in B_R$ and let $\varphi\in C^2(B_R)$ touching $u$ from above at $x$. First we note that, since  the function $g_1(r)$ and $g_2(r)$ are both twice differentiable in a neighbourhood of $\bar r$, using \eqref{eq206} one has
$$
(g_1-g_2)''(\bar r)=-\frac{1}{k-\bar rb(\bar r)}-g_2''(\bar r)>-\frac{1}{k-\bar rb(\bar r)}-\frac{g'_2(\bar r)}{\bar r}=0
$$
hence $g_1\geq g_2$ around $\bar r$. In this way $\varphi$ touches from above $g_2(|x|)$ at $x_0$ and 
$$
D\varphi(x_0)=D g_2(\bar r),\quad D^2\varphi(x_0)\geq D^2g_2(\bar r). 
$$ 
Then using \eqref{3eq4}
\begin{equation*}
\Ppk( D^2\varphi(x_0))\geq\Ppk(D^2g_2(\bar r))=k \frac{g'_2(\bar r)}{\bar r}=1-b(\bar r)|D \varphi(x_0)|,
\end{equation*}
which shows that $u$ is a viscosity subsolution. The supersolution property of $u$ can be proved in a similar way, using in particular that if $\varphi$ touches $u$ from below at $x_0$, then $\varphi$ is in fact a test function for $g_1(|x|)$. 


\textbf{Cases $r_0=0$ or $r_0=R$.} \\
The solution is given by $u(x)=g_2(|x|)$ if $\bar r=0$ where $g_2$ is defined in \eqref{eq208}, while if $\bar r=R$ then $u(x)=g_1(|x|)$ with $g_1$ defined by \eqref{eq202}.

\medskip
This ends the proof of the first part of the proposition.

Now we 
assume \eqref{3eq3}. 
By contradiction we assume that there exists a supersolution of \eqref{3eq1}. By the definition of $R_0$ one has $\displaystyle\inf_{r\in[0,R_0-\varepsilon]}k-rb(r)>0$ 
for any $\varepsilon\in(0,\,R_0)$. Consider the function
$$
u_\varepsilon(x):=(1-\varepsilon) g_\varepsilon(|x|):=(1-\varepsilon)
\int_{|x|}^{R_0-\varepsilon}\frac{r}{k-rb(r)}\,dr.
$$
It is a classical strict subsolution of \eqref{3eq1}, since
$$
\Ppk(D^2u_\varepsilon(x))+b(|x|)|D u_\varepsilon(x)|\geq 
\left(k\frac{g'_\varepsilon(|x|)}{|x|}-b(|x|)g'_\varepsilon(|x|)\right)=-(1-\varepsilon).
$$
 By comparison $u(x)\geq u_{\varepsilon}(x)$ in $B_{R_0-\varepsilon}$ which leads to $u=+\infty$ in $B_{R_0}$ by letting $\varepsilon\to0$.
\end{proof}

\begin{remark}
\rm We briefly discuss the  effects of  reversing the inequality \eqref{H1} in Proposition \ref{3prop1}.  Assume
\begin{equation}\label{H1'}
(r-r_0)(rb(r))'\leq0\quad \forall r\in(0,R).
\end{equation} 
Without loss of generality we may assume $r_0\in(0,R)$. \\
In $[0,\,r_0]$ the function
$$
g_1(r)=-\int_0^r\frac{s}{k-sb(s)}\,ds+c_1
$$
 is a solution of 
\begin{equation*}
\begin{cases}
k\frac{g_1'(r)}{r}-b(r)g_1'(r)=-1 & \text{in $(0,\, r_0]$}\\
g_1'(0)=0
\end{cases}
\end{equation*}
for any choice of the constant $c_1$. Moreover $g'_1(r)\leq0$ and $g_1''(r)\leq\frac{g_1'(r)}{r}$ for any $r\in(0,\,r_0]$.\\
In $[r_0,R]$ where $g_1''(r)\geq\frac{g_1'(r)}{r}$  we look at the second order problem
\begin{equation*}
\begin{cases}
g_2''(r)+\frac{k-1}{r}g_2'(r)-b(r)g_2'(r)=-1 & \text{in $[r_0,R]$}\\
g_2''(r)\geq\frac{g_2'(r)}{r},\;g_2'(r)\leq0 & \text{in $[r_0,R]$}\\
g_2(R)=0.
\end{cases}
\end{equation*}
By computations
$$
g_2(r)=\int_r^R\frac{e^{B(s)}}{s^{k-1}}\left(\int_{r_0}^se^{-B(\tau)}\tau^{k-1}\,d\tau+c_2\right)\,ds
$$
where $B'=b$ and any  $c_2\geq\frac{r_0^k e^{-B(r_0)}}{k-r_0b(r_0)}$.
If we fix 
$$
c_2=\frac{r_0^k e^{-B(r_0)}}{k-r_0b(r_0)}\quad\text{and} \quad c_1=\int_0^{r_0}\frac{s}{k-s b(s)}\,ds+g_2(r_0)
$$ 
then the function 
$$
g(r):=\begin{cases}
g_1(r) & r\in[0,r_0]\\
g_2(r) & r\in(r_0,R]
\end{cases}
$$
is in fact of class $C^2$. Then $u(x)=g(|x|)$ is a classical solution of \eqref{3eq1}. This is the main difference with respect to Proposition \ref{3prop1}, where the solution was not  in general $C^2$ in the set  $\partial B_{\bar r}$, see \eqref{eq201}-\eqref{eq205}  for the definition of $\bar r$. This is due to the fact that here we switch from a first order to a second order problem exactly at $r_0$, the point where the derivative of $rb(r)$ vanishes and so $g''_1=g''_2$, while in Proposition \ref{3prop1} this happens at $\bar r>r_0$   where $g''_1(\bar r)\geq g''_2(\bar r)$.

The  uniqueness of solutions of \eqref{3eq1} with \eqref{H1'} 
is due to the comparison principle, as usual.  

The nonexistence of supersolutions under the assumption \eqref{3eq3},  can be obtained  as in the proof  Proposition \ref{3prop1}. 
\end{remark}

\begin{corollary}

Let $\Omega$ be a domain such that $B_R\subset\Omega$ and assume \eqref{H1}. Then
\begin{itemize}
	\item if $\int^{R_0}\frac{r}{k-rb(r)}\,dr=+\infty$ 
  there aro no positive supersolution of 
$$\Ppk(D^2u)+b(r)\left|D u\right|\leq-1\qquad\text{in \;$\Omega$};$$
\item if $R_0<R$ there are no $(\mu,\psi)\in\mathbb R_+\times\operatorname{LSC}(\Omega)$ such that 
$$
\Ppk(D^2\psi)+b(r)|D \psi|+\mu\psi\leq 0,\;\;\psi>0 \quad \text{in $\Omega$},
$$ 
i.e. $\overline\mu^+_k=\mu^+_k=0$. 
\end{itemize}
\end{corollary}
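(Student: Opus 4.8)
The plan is to reduce both assertions to Proposition~\ref{3prop1} (and the computations inside its proof), exactly as Corollary~\ref{cor} was obtained in the constant-coefficient case.

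For the first assertion I would argue by contradiction: suppose $u\in\operatorname{LSC}(\Omega)$, $u>0$ in $\Omega$, is a supersolution of $\Ppk(D^2u)+b(r)|Du|\le-1$ in $\Omega$. Since $B_{R_0}\subset B_R\subset\Omega$, the same inequality holds in $B_{R_0}$, and I would reuse the barrier from the nonexistence part of the proof of Proposition~\ref{3prop1}: for $\varepsilon\in(0,R_0)$ the explicit radial function $u_\varepsilon(x)=(1-\varepsilon)\int_{|x|}^{R_0-\varepsilon}\frac{r}{k-rb(r)}\,dr$ is well defined (because $\inf_{[0,R_0-\varepsilon]}(k-rb(r))>0$), is $C^1$ on $B_{R_0-\varepsilon}$ with gradient vanishing at the origin, vanishes on $\partial B_{R_0-\varepsilon}$, and satisfies $\Ppk(D^2u_\varepsilon)+b(r)|Du_\varepsilon|\ge-(1-\varepsilon)>-1$ there, hence is a classical strict subsolution of $\Ppk(D^2v)+b(r)|Dv|=-1$. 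As $u>0=u_\varepsilon$ on $\partial B_{R_0-\varepsilon}$, $u$ is a supersolution of the same equation there, and the forcing $-1$ is strictly negative, the comparison principle Proposition~\ref{g-comp}(ii) (whose local Lipschitz requirement is met by the $C^1$ barrier $u_\varepsilon$) gives $u\ge u_\varepsilon$ in $B_{R_0-\varepsilon}$. Letting $\varepsilon\to0^+$ and invoking the divergence of $\int^{R_0}\frac{r}{k-rb(r)}\,dr$ yields $u\equiv+\infty$ on $B_{R_0}$, contradicting that $u$ is real-valued; positivity of $u$ is used precisely once, in the boundary inequality $u>u_\varepsilon$ on $\partial B_{R_0-\varepsilon}$.

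For the second assertion I would first observe that $R_0<R$ forces $\int^{R_0}\frac{r}{k-rb(r)}\,dr=+\infty$, this being the contrapositive of the first half of the proof of Proposition~\ref{3prop1}, where convergence of the integral was shown to imply $R_0=R$. Then I argue by contradiction from a pair $(\mu,\psi)$ with $\mu>0$, $\psi>0$ in $\Omega$, and $\Ppk(D^2\psi)+b(r)|D\psi|+\mu\psi\le0$ in $\Omega$. Since $R_0<R$, the set $\overline{B_{R_0}}$ is a compact subset of $\Omega$, so $m:=\min_{\overline{B_{R_0}}}\psi>0$ by lower semicontinuity; choosing $M>0$ with $M\mu m\ge1$ and setting $u=M\psi$, positive homogeneity of the operator gives in $B_{R_0}$ that $\Ppk(D^2u)+b(r)|Du|\le-M\mu\psi\le-M\mu m\le-1$, so $u$ is a positive $\operatorname{LSC}$ supersolution of $\Ppk(D^2v)+b(r)|Dv|\le-1$ in $B_{R_0}$. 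Since the integral diverges, the argument of the first assertion (run now on the ball $B_{R_0}$ itself) forces $u\equiv+\infty$ in $B_{R_0}$, a contradiction. Finally $\mu^+_k=\overline\mu^+_k=0$, because every $\mu>0$ has just been excluded while $\mu=0$ is admissible (a positive constant function is a positive supersolution of $\Ppk(D^2\varphi)+b(r)|D\varphi|\le0$ on $\overline\Omega$).

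I do not expect a genuine obstacle, since the corollary is merely a localization of Proposition~\ref{3prop1}. The only points requiring care are the bookkeeping of the comparison principle (it is applied on the ball $B_{R_0-\varepsilon}$ with strictly negative forcing $-1$, so hypothesis (ii) of Proposition~\ref{g-comp} is in force and its regularity hypothesis is supplied by the explicit barrier), checking that $u_\varepsilon$ is $C^1$ through the origin (its radial derivative $-|x|/(k-|x|b(|x|))$ tends to $0$ as $x\to0$), and the elementary implication $R_0<R\Rightarrow\int^{R_0}\frac{r}{k-rb(r)}\,dr=+\infty$ that couples the two assertions.
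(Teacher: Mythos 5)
Your argument is correct and is essentially the paper's. For the first item the paper simply cites the nonexistence half of Proposition~\ref{3prop1} (a supersolution on $\Omega\supset B_{R_0}$ restricts to one on $B_{R_0}$, which is forbidden when the integral diverges); you inline that barrier-and-comparison argument, including the correct observation that Proposition~\ref{g-comp}(ii) is the version of the comparison principle in force, which is fine but adds nothing beyond re-proving the cited result. For the second item the paper scales $\psi$ exactly as you do and then reasons: the resulting supersolution on $B_{R_0}$ rules out \eqref{3eq3}, hence \eqref{3eq2} holds, hence $R_0=R$, contradiction; you reorganize this slightly by first extracting the implication $R_0<R\Rightarrow\eqref{3eq3}$ from the contrapositive of the first half of Proposition~\ref{3prop1} and then invoking the nonexistence part directly, which is logically equivalent. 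Your closing remark that a positive constant certifies $\mu=0$ is admissible, giving $\mu^+_k=\overline\mu^+_k=0$, is correct and fills in the ``i.e.'' that the paper leaves implicit.
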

\begin{proof}
First part is a direct consequence of Proposition \ref{3prop1}. Let us assume now that $R_0<R$ and that $\psi$ is a positive supersolution of 
$$
\Ppk(D^2\psi)+b(r)|D \psi|+\mu\psi= 0 \quad \text{in $\Omega$},
$$
with $\mu>0$. Then  the function $\displaystyle u=\frac{\psi}{{\mu\displaystyle\min_{\overline B_{R_0}}\psi}}$ satisfies 
$$
\Ppk(D^2u)+b(r)|D u|= -1 \quad \text{in $B_{R_0}$},
$$
so, by \eqref{3eq3}, $\int^{R_0}\frac{r}{k-rb(r)}\,dr<+\infty$. Hence \eqref{3eq2} implies $R_0=R$, a contradiction.
\end{proof}

\subsection{Case $R=R_0$  and $\Omega=B_R$} 
In  Subsection \ref{diff. eigenvalues} we  showed that the two notions of generalized principal eigenvalues,  $\bar\mu^+_k$ and $\mu_k^+$, does not coincide in the case $bR=k$. This fact still holds in the nonconstant case $b=b(r)$ under some additional assumptions. \\
First the condition $bR=k$ now reads as $Rb(R)=k$. Then we assume
\begin{equation}\label{H2}
l:=\inf_{r\in(0,R)}\frac{(rb(r))'}{r}>0,
\end{equation}
which obviously holds if $b$ is a positive constant.
Note that \eqref{H2}  implies \eqref{H1} with $r_0=0$.\\
For $\mu>0$ let us consider the problem
\begin{equation}\label{2subeq1}
\begin{cases}
\Ppk(D^2u)+ b(r)|D u|+\mu u=0,\;\;u>0 & \text{in $B_R$}\\
u=0 & \text{on $\partial B_R$}.
\end{cases}
\end{equation}
By straightforward computation the functions
$$
\varphi(r)=\varphi(0)\exp\left\{-\mu\int_0^r\frac{s}{k-sb(s)}\,ds\right\}
$$
is a solution of the ODE
\begin{equation*}
\begin{cases}
\left(\frac kr-b(r)\right)\varphi'(r)+\mu\varphi(r)=0& r\in(0,R)\\
\varphi'(0)=0,\;\varphi(0)>0.
\end{cases}
\end{equation*}
Using \eqref{H2} it is easily seen that 
$$
\frac{\varphi'(r)}{r}\geq\varphi''(r) \quad\forall r\in(0,R)
$$
for any $\mu\leq l$. Hence $u(x)=\varphi(|x|)$ are positive radial solution of the equation in \eqref{2subeq1}. If in addition 
$$
\int_0^R\frac{s}{k-sb(s)}\,ds=+\infty
$$
then $u=0$ on $\partial B_R$, leading to 
$$
\overline \mu^+_k=0<l\leq\mu^+_k.
$$

\section{Convex domains}
%

%
%

In order to prove Theorem \ref{thm-Cj} we need the following lemmas.

\begin{lemma} \label{const-omega}Let $K$ be a compact subset of $\R^m$.
Assume that 
\[
0\in K \ \ \AND \ \ K\stm \{0\} \subset \{x=(x_1,\ldots,x_m)\in\R^m\mid x_m<0\}. 
\]
Then there exists a bounded, open, strictly convex set $\omega\subset\R^m$ such that 
\[
K\subset \ol\omega \ \ \AND \ \ \omega\subset \{x\in\R^m\mid x_m<0\}. 
\] 
\end{lemma}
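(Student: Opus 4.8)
The plan is to take $\omega$ to be an open strict sublevel set $\{x\in\R^m:\ x_m+\gamma(|x|)<0\}$ for a suitably chosen profile $\gamma:[0,\infty)\to[0,\infty)$. Let $e_m$ be the $m$-th coordinate vector. If $\gamma\geq 0$, $\gamma(0)=\gamma'(0)=0$, $\gamma$ is strictly convex and strictly increasing, and $\gamma$ is superlinear ($\gamma(r)/r\to\infty$), then such a set has all the required features: the map $x\mapsto\gamma(|x|)$ is then strictly convex on $\R^m$ (a short case split according to whether $x$ and $y$ point in the same direction — i.e. whether $|\tfrac{x+y}{2}|=\tfrac{|x|+|y|}{2}$ — the collinear case using strict convexity of $\gamma$, the other using strict monotonicity of $\gamma$ and strict convexity of $|\cdot|$), so $\Phi(x):=x_m+\gamma(|x|)$ is strictly convex and $\omega=\{\Phi<0\}$ is open and strictly convex; it is bounded since $\Phi(x)\geq-|x|+\gamma(|x|)\to\infty$; it is nonempty since $\gamma(\delta)=o(\delta)$ gives $\Phi(-\delta e_m)<0$ for small $\delta>0$; its closure is $\{\Phi\leq 0\}$ (a convex function with nonempty strict sublevel set) and contains $0$ because $\Phi(0)=0$; and $\omega\subset\{x_m<0\}$, since $\Phi(x)<0$ forces $x_m<-\gamma(|x|)\leq 0$, and $x_m=0$ is impossible as it would force $\gamma(|x|)<0$.

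Hence everything reduces to choosing $\gamma$ with the additional property $\gamma(|y|)\leq -y_m$ for all $y\in K$: since $y_m\leq 0$ on $K$ (because $0\in K$ and $K\setminus\{0\}\subset\{x_m<0\}$), this yields $y_m+\gamma(|y|)\leq 0$, i.e. $K\subset\{\Phi\leq 0\}=\ol\omega$. Assume $K\neq\{0\}$ (if $K=\{0\}$, take $\gamma(r)=r^2$), set $D=\max_{y\in K}|y|>0$, and for $r\in(0,D]$ put
\[
G(r)=\min\{\,|y_m|:\ y\in K,\ |y|\geq r\,\}.
\]
The set $\{y\in K:|y|\geq r\}$ is compact, nonempty (it contains a norm-$D$ point of $K$) and misses $0$, hence lies in $K\setminus\{0\}\subset\{x_m<0\}$; so the minimum is attained, $G(r)>0$, and $G$ is non-decreasing. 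Any strictly convex, strictly increasing, superlinear $\gamma\in C^1([0,\infty))$ with $\gamma(0)=\gamma'(0)=0$ and $\gamma\leq G$ on $(0,D]$ then works, because $\gamma(|y|)\leq G(|y|)\leq|y_m|$ for $y\in K\setminus\{0\}$.

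To construct $\gamma$ I would prescribe its derivative. With $q(r)=G(r)/r$ and $\underline{q}(r)=\inf_{[r,D]}q$, monotonicity of $G$ gives $\underline{q}(r)\geq G(r)/D>0$, and $\underline{q}$ is non-decreasing and bounded; then $\beta(r)=\tfrac1r\int_0^r\underline{q}(t)\,dt$ is continuous, positive, non-decreasing, bounded, and $\leq\underline{q}\leq q$, so $\tilde\beta(r):=\beta(r)\,r/D$ is continuous, positive, \emph{strictly} increasing on $(0,D]$, vanishes as $r\to 0^+$, and still satisfies $r\tilde\beta(r)\leq rq(r)=G(r)$. Set $\gamma(r)=\int_0^r\tilde\beta(t)\,dt$ on $[0,D]$ and extend to $[0,\infty)$ by $\gamma'(r)=\tilde\beta(D)+(r-D)$ for $r>D$. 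Then $\gamma\in C^1$, $\gamma(0)=\gamma'(0)=0$, $\gamma'$ is positive, strictly increasing and tends to $\infty$ (so $\gamma$ is strictly convex, strictly increasing and superlinear), and $\gamma(r)\leq r\tilde\beta(r)\leq G(r)$ on $(0,D]$. By the first two paragraphs, $\omega=\{\Phi<0\}$ has all the asserted properties, which finishes the proof.

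The point requiring real care — and the reason the obvious candidates fail — is the combination ``$\omega$ strictly convex, $\omega\subset\{x_m<0\}$, $0\in\ol\omega$'': it forces $\pl\omega$ to be tangent to $\{x_m=0\}$ at $0$ yet strictly convex there, while $K$ may touch $\{x_m=0\}$ at $0$ to arbitrarily high (even infinite) order. A fixed shape (ball, ellipsoid), a profile $\gamma$ vanishing at $0$ only to a fixed polynomial order, or a generic ``strict-convexification'' such as $x\mapsto\dist(x,\Conv K)^2+\delta|x|^2$, all bulge past $\{x_m=0\}$ near $0$; the profile $\gamma$ must instead be fitted to $K$ through the compactness datum $G$, which is exactly what the averaging construction above accomplishes. (Note that strict convexity of $\omega$ anyway forces $\Conv K\subset\ol\omega$, so there is genuinely little freedom in the choice of $\omega$.)
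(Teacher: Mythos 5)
Your proof is correct, and it takes a genuinely different route from the paper's. The paper constructs $\omega$ as (the interior of) an intersection of balls $\ol B_R(-\rho(R)e_m)$, $R\geq R_0$, where $\rho(R)=\sup\{h\geq 0 : K\subset \ol B_R(-he_m)\}$ is the maximal downward translate keeping $K$ inside the ball of radius $R$; the heart of that argument is the asymptotics $\lim_{R\to\infty}(R^2-\rho(R)^2)=0$, proved by a geometric choice of $h,\gamma,R_1$ relating the ball to slabs around $K\setminus B_r$, after which strict convexity of the infinite intersection is extracted by Lemma \ref{ball u-convexity} together with the identity $\Delta_\varepsilon\cap\{x_m<-\varepsilon\}=\Delta\cap\{x_m<-\varepsilon\}$. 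You instead reduce the whole problem to one‑dimensional calculus: you take $\omega=\{x_m+\gamma(|x|)<0\}$ for a $C^1$, strictly convex, superlinear profile $\gamma$ with $\gamma(0)=\gamma'(0)=0$ and $\gamma\leq G$ on $(0,D]$, where $G(r)=\min\{|y_m|:y\in K,\ |y|\geq r\}$ encodes the flatness of $K$ at the origin, and then manufacture $\gamma$ by smoothing $\underline q=\inf_{[\,\cdot\,,D]}G(\cdot)/\cdot$ through two averagings. I checked the details — strict convexity of $x\mapsto\gamma(|x|)$ via the triangle‑inequality case split, boundedness from superlinearity, $\ol\omega=\{\Phi\leq 0\}$ from convexity plus nonemptiness, $\beta\leq\underline q$, monotonicity of $\beta$, strict monotonicity of $\tilde\beta$, and $\gamma(r)\leq r\tilde\beta(r)\leq G(r)$ — and they all hold. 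Your approach trades the paper's intersection‑of‑balls machinery (which reuses the template of the supersolution $\psi$ earlier in the paper) for an explicit convex body whose only nontrivial ingredient is the compactness‑derived modulus $G$; it is arguably more elementary and self‑contained, and your closing remark correctly isolates the genuine difficulty (the boundary may touch $\{x_m=0\}$ at $0$ to arbitrary order, so no fixed‑order profile works) that both arguments must overcome.
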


\begin{lemma}\label{relative int} Let $A$ be a compact convex subset of 
$\R^N$ such that $0\in A$. Let $V$ be the linear span of $A$ and set 
$m=\dim V$. Then there exist a basis $\{a_1,\ldots,a_m\}$ of $V$ such that 
$a_i\in A$ for all $i\in\{1,\ldots,m\}$.  
\end{lemma}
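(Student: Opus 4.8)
The plan is to prove this by a short linear-algebra argument; note that the hypotheses of compactness, convexity and $0\in A$ are not actually needed for the \emph{statement} as phrased, which only asserts the existence of a basis of $V$ consisting of points of $A$. (These hypotheses will be used only when the lemma is applied — typically to combine it with $0\in A$ and convexity to conclude that $\Conv\{0,a_1,\ldots,a_m\}\subset A$, so that $A$ has nonempty interior relative to $V$, which is presumably why the lemma carries the name ``relative int''.) Concretely, I would show that the maximal cardinality of a linearly independent subset of $A$ equals $m=\dim V$.

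First I would pick a subset $\{a_1,\ldots,a_r\}\subset A$ that is linearly independent and has maximal cardinality among all linearly independent subsets of $A$; such an $r$ exists since every linearly independent subset of $\R^N$ has at most $N$ elements. Since $a_1,\ldots,a_r\in A\subset V$, we get $r\le\dim V=m$. For the reverse inequality, I would use maximality: for every $a\in A$ the set $\{a_1,\ldots,a_r,a\}$ is linearly dependent, hence $a\in\Span\{a_1,\ldots,a_r\}$. Thus $A\subset\Span\{a_1,\ldots,a_r\}$, and taking linear spans gives $V=\Span A\subset\Span\{a_1,\ldots,a_r\}$, so $m=\dim V\le r$. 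Combining, $r=m$, and therefore $\{a_1,\ldots,a_m\}$ is a linearly independent subset of $A$ of size $m$ contained in the $m$-dimensional space $V$, hence a basis of $V$, as required.

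There is no genuine obstacle in this argument — it is entirely routine. The only thing worth being attentive to is resisting the temptation to invoke the convexity or compactness of $A$: they are inessential here, and the conclusion is a purely linear phenomenon about spanning sets containing bases.
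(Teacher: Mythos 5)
Your proof is correct and is essentially the same argument as the paper's: both select a maximal linearly independent subset of $A$ and use maximality to conclude it spans $V$ (the paper phrases this via a small contradiction, you phrase it directly as $A\subset\Span\{a_1,\ldots,a_r\}$). Your side remark that compactness, convexity, and $0\in A$ are not needed for this particular statement — and that the name ``relative int'' reflects how the lemma is subsequently used — is accurate.
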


Assuming the lemmas above, we first present the proof of Theorem \ref{thm-Cj}. Henceforth $e_1,\ldots,e_N$ will denote the standard basis of $\mathbb R^N$.

\bproof[Proof of Theorem \ref{thm-Cj}]  Assume that $\Omega\in\cC_j$. Fix any $x\in\pl\Omega$ and prove that $d_x(\Omega)\leq N-j$. There is a
$(O,C)\in S_j(\Omega)$, with $C=\omega\tim\R^{N-j}$, such that $x\in\pl(OC)$ 
and $\Omega\subset OC$. Suppose by contradiction that $d_x(\Omega)>N-j$ and set $m=d_x(\Omega)$. 
There exist an $m$-dimensional linear subspace $V$ in $\R^N$ 
and $\delta>0$ such that $x+V\cap B_\delta \subset \pl\Omega$. 
Observe that  
\[x+V\cap B_\delta\subset \pl\Omega \subset \ol{OC}=O\ol C,
\]
and hence
\begin{equation}\label{OTV}
O^Tx+O^T V\cap B_\delta\subset O^TO\ol C=\ol C=\ol\omega\tim \R^{N-j}. 
\eeq
Since $x\in\pl(OC)=O\pl C=O(\pl\omega \tim\R^{N-j})$, we have $O^T x\in 
\pl\omega\tim\R^{N-j}$. Set 
$y=O^Tx$ and $W=O^T V$ and note that $y\in\pl\omega\tim\R^{N-j}$ and $W$ is $m$-dimensional. 

Since $m>N-j$, the $m$-dimensional ball 
$W\cap B_\delta$  is not contained in $\{0^j\}\tim \R^{N-j}$, where $0^j:=(0,\ldots,0)\in\R^j$. Hence, there exists $w \in W\cap B_\delta\stm \{0^j\}\tim \R^{N-j}$, which 
also means that $-w \in W\cap B_\delta\stm \{0^j\}\tim \R^{N-j}$. 
We set $w^{(j)}=(w_1,\ldots,w_j,0,\ldots,0)\in\R^N$
and note that $w^{(j)}\not=0$ since $w\not\in\{0^j\}\tim\R^{N-j}$. 
Moreover, we observe by \erf{OTV} that 
\[
y\pm w\in y+W\cap B_\delta\subset \ol\omega \tim \R^{N-j},
\]
and hence, $y^{(j)}\pm w^{(j)} \in\ol\omega$. Since $\omega$ is strictly convex and 
$w^{(j)}\not=0$, it is obvious 
that
\[
y=\frac{1}{2}(y+w^j+y-w^j)\in\omega\tim\R^{N-j}.
\]
This contradicts that $y\in\pl\omega\tim\R^{N-j}$. Thus, we have shown that $d(\Omega)\leq N-j$. 
\smallskip

Next, we assume that $d(\Omega)\leq N-j$. Fix any $z\in\pl\Omega$ and $\nu\in N_\Omega(z)$.  
By translation, we may assume that $z=0$. Set 
\[
A=\ol\Omega\cap\{x\in\R^N\mid \nu\cdot x\geq 0\},
\]
and note that $0\in A\subset \pl\Omega$ and $A$ is a compact  convex set. Consider the linear 
span $V_0$ of $A$. It follows that $\dim V_0\leq d(\Omega)$. Indeed, by Lemma \ref{relative int}, there exists a linear basis $\{a_1,\ldots,a_m\} \subset A$ of $V_0$. 
Set $a=(a_1+\cdots+a_m)/m$ and observe that $a\in A\subset\pl\Omega$ 
and, for $\delta>0$ small enough,  
\[
a+B_\delta\cap V_0=B_\delta(a)\cap V_0\subset \Big\{\sum_{i=1}^m t_ia_i\mid  t_i\geq 0,\, \sum_{i=1}^m t_i\leq 1\Big\} 
\subset A\subset \pl\Omega,
\]
which ensures that $\dim V_0\leq d(\Omega)$. 

Since $A$ is included in the supporting plane $\{x\in\R^N \mid\nu\cdot x=0\}$ of $\Omega$ at $0$, which is $(N-1)$-dimensional, we may choose a $(N-j)$-dimensional 
subspace $V$ of $\{x\in\R^N \mid\nu\cdot x=0\}$ such that $A\subset V_0\subset V$. 

Now, we observe that
\begin{equation}\label{strict convex V perp} 
\nu\cdot x<0 \ \ \text{ for all }x\in \ol\Omega\stm V. 
\eeq
Indeed, if $x\in\ol\Omega\stm V$, then $x\in\ol\Omega\stm A$ and, by the definition of  
$A$, $\nu\cdot x<0$. 

By orthogonal transformation, we may assume that 
\[
\nu=e_j \ \ \AND \ \ V=\{0^j\}\tim \R^{N-j}.  
\]
Set 
\[
K=\{(x_1,\ldots,x_j)\in\R^j\mid (x_1,\ldots,x_N)\in \ol\Omega \text{ for some }(x_{j+1},\ldots,x_N)\in\R^{N-j}\}. 
\]
This $K$ is the projection of $\ol\Omega$ onto $\R^j$ and is compact and convex. 
Clearly, we have $0\in K$.  Moreover,  
if $x\in K\stm \{0^j\}$, 
then, by the definition of $K$, there is $y=(y_1,\ldots,y_N)\in\ol\Omega$ such that 
$x=(y_1,\ldots,y_j)$ and we have $y\not \in V=\{0^j\}\tim \R^{N-j}$ 
since $x\not=0^j$, 
and, by \erf{strict convex V perp},  $\nu\cdot y<0$, which reads $y_j<0$. 
We may apply Lemma \ref{const-omega}, to conclude that 
there is a bounded strictly convex domain $\omega\subset\R^j$ such that 
\begin{equation}\label{last condition}
K\subset \ol\omega \ \ \AND \ \ 
\omega \subset \{x=(x_1,\ldots,x_j)\in \R^j\mid x_j<0\}. 
\eeq
Hence, by the definition of $K$, we see that 
\[
\ol\Omega \subset \ol\omega \tim\R^{N-j},
\] 
which implies, since $\Omega$ and $\omega$ are nonempty convex sets
that
\[
\Omega\subset \omega\tim\R^{N-j}. 
\]
It is obvious from \erf{last condition} that for the boundary point $0$ of $\Omega$, 
$0\in \pl\omega\tim\R^{N-j} =\pl(\omega\tim\R^{N-j})$. 
\end{proof}

We need the following lemma for the proof of Lemma \ref{const-omega}.

\begin{lemma} \label{ball u-convexity} Let $B_R(z)$ be the open ball of $\R^N$ with radius $R>0$ and center $z$. For any $x,y\in\ol B_R(z)$, with $x\neq y$, and $0<t<1$, 
there exists a positive constant $\delta=\delta(R,t,|x-y|)$ such that 
\[
B_\delta(tx+(1-t)y)\subset B_R(z). 
\]
Moreover, $\delta(R,t,|x-y|)$ can be chosen depending only on $R$, $t$, and $|x-y|$ 
and decreasingly on $R$.  
\end{lemma}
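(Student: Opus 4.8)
The plan is to use the translation invariance of the statement to reduce to the case $z=0$, and then to prove the sharp quantitative fact behind the lemma: the midpoint $p:=tx+(1-t)y$ of two points of $\ol B_R$ lies \emph{strictly inside} $B_R$, with a distance to $\pl B_R$ that can be bounded below in terms of $R$, $t$ and $|x-y|$ only. Once $|p|$ is controlled below $R$, the inclusion $B_\delta(p)\subset B_R$ is immediate from the triangle inequality for any $\delta\le R-|p|$.

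The key computation I would carry out is the elementary identity
\[
|tx+(1-t)y|^2=t|x|^2+(1-t)|y|^2-t(1-t)|x-y|^2,
\]
valid for all $x,y\in\R^N$ and $t\in\R$ (expand both sides, or view it as the convex-combination form of the parallelogram law). Since $x,y\in\ol B_R$ means $|x|,|y|\le R$, this yields
\[
|p|^2\le R^2-t(1-t)|x-y|^2,
\]
and the subtracted term is strictly positive because $x\neq y$ and $0<t<1$; hence $|p|<R$. I would also record that $|x-y|\le 2R$ together with $t(1-t)\le\frac14$ gives $t(1-t)|x-y|^2\le R^2$, so the radicand below is nonnegative.

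I would then set
\[
\delta=\delta(R,t,|x-y|):=R-\sqrt{R^2-t(1-t)|x-y|^2}>0,
\]
and check the inclusion directly: if $|q-p|<\delta$ then $|q|<|p|+\delta\le\sqrt{R^2-t(1-t)|x-y|^2}+\delta=R$, so $q\in B_R$; thus $B_\delta(p)\subset B_R$. By construction $\delta$ depends only on $R$, $t$, $|x-y|$. For the monotonicity, I would rewrite
\[
\delta=\frac{t(1-t)|x-y|^2}{R+\sqrt{R^2-t(1-t)|x-y|^2}},
\]
where, for fixed $t$ and $|x-y|$, the numerator is constant and the denominator is increasing in $R$, so $\delta$ is decreasing in $R$, as required.

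I do not expect any real obstacle here: the whole lemma is a single quantitative strict-convexity estimate for a ball. The only points that need a line of care are verifying that $R^2-t(1-t)|x-y|^2\ge 0$ (via $|x-y|\le 2R$) so that $\delta$ is well defined, and choosing the second expression for $\delta$ above so that decrease in $R$ is manifest rather than requiring a derivative computation.
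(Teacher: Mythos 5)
Your proof is correct and follows essentially the same route as the paper: the same convex-combination identity $|tx+(1-t)y-z|^2 = t|x-z|^2+(1-t)|y-z|^2-t(1-t)|x-y|^2$, the same bound $\le R^2-t(1-t)|x-y|^2$, and the same rationalized form of $\delta$ to make the decrease in $R$ manifest. The only cosmetic differences are that you normalize $z=0$ first and take $\delta = R-\sqrt{R^2-t(1-t)|x-y|^2}$ rather than half of it as in the paper; both choices work, yours being slightly sharper.
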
 

\bproof Combine
\[
|tx+(1-t)y-z|^2=t^2|x-z|^2+(1-t)^2|y-z|^2+2t(1-t)(x-z)\cdot (y-z),
\]
and
\[
|x-y|^2=|x-z-(y-z)|^2=|x-z|^2+|y-z|^2-2(x-z)\cdot (y-z),
\]
to get 
\[\bald
|tx+(1-t)y-z|^2&\,= t^2|x-z|^2+(1-t)^2|y-z|^2+t(1-t)(|x-z|^2+|y-z|^2-|x-y|^2)
\\&\,=t|x-z|^2+(1-t)|y-z|^2-t(1-t)|x-y|^2\leq R^2-t(1-t)|x-y|^2. 
\eald\]
Hence, if we set 
\[
\delta:=\frac 12 \left(R-\sqrt{R^2-t(1-t)|x-y|^2}\right)
=\frac{t(1-t)|x-y|^2}{2(R+\sqrt{R^2-t(1-t)|x-y|^2})},
\]
then we have $B_\delta(tx+(1-t)y)\subset B_R(z)$. The choice $\delta$ above 
has the required dependence on $R$, $z$ and so on. 
\end{proof}

\bproof[Proof of Lemma \ref{const-omega}] Fix an $R_0>0$ so that $K\subset B_{R_0}$ and for $R\geq R_0$, set
\[
\rho(R) =\sup\{h\geq 0 \mid K\subset \ol B_R(-h e_m)\},
\]
where $e_m$ is the unit vector in $\R^m$, with unity as the last ($m$-th) entry. 
Since $0\in K$, we see that $\rho(R)\leq R$. Also, since 
$K\subset B_{R_0}\subset B_R(-(R-R_0)e_m)$, we have $\rho(R)\geq R-R_0$. 
It is now clear that $\rho(R)$ is achieved. 
In particular, if $S>R\geq R_0$, then 
$K\subset \ol B_R(-\rho(R)e_m)\subset \ol B_S(-\rho(R)e_m)$ and hence, $\rho(S)\geq \rho(R)$. Thus, $\rho(R)$ depends on $R$ nondecreasingly (in fact, increasingly).    

We claim that 
\begin{equation}\label{Rinfty}
\lim_{R\to \infty}(R^2-\rho(R)^2)=0. 
\eeq

To prove this, fix first  any $r>0$. Since $K\stm B_r$ is compact and 
$K\stm B_r\subset \{x\in\R^m\mid x_m<0\}$, we may choose $0<\gamma_1<R_1$ so that 
\[
K\stm B_r\subset \{x\in\R^m\mid x_1^2+\ldots+x_{m-1}^2<R_1^2,\, -R_1<x_m<-\gamma_1\}.
\] 
One can always replace $R_1$ and $\gamma_1$, without 
violating the above inclusion, by larger and smaller ones, respectively. 
In what follows, we may fix $R_1$ so that $R_1>r$ and consider $0<\gamma<\gamma_1$.

We next choose $0<h<R$ such that 
\begin{equation*}
\ol B_R(-he_m)\cap\left\{x\in\R^m\mid x_m=0\right\}=\left\{x\in\R^m\mid x_1^2+\cdots+x_{m-1}^2\leq r^2,\ x_m=0\right\}
\end{equation*}
and
\begin{equation*}
\ol B_R(-he_m)\cap\left\{x\in\R^m\mid x_m=-\gamma\right\}=\left\{x\in\R^m\mid x_1^2+\cdots+x_{m-1}^2\leq R_1^2,\ x_m=-\gamma\right\}
\end{equation*}
i.e. we choose 
\[
h=\frac{R_1^2+\gamma^2-r^2}{2\gamma} \ \ \text{ and } \ \ R=\sqrt{h^2+r^2}=\sqrt{(h-\gamma)^2+R_1^2}\,.
\]

Reducing  $\gamma_1$ if necessary, we can suppose that the function $g(\gamma)=\frac{R_1^2+\gamma^2-r^2}{2\gamma}$ is decreasing  in $(0,\gamma_1]$. Let $h_1=g(\gamma_1)$.
Fix any $R>\sqrt{h_1^2+r^2}$ 
and let $h=\sqrt{R^2-r^2}>h_1$. Since $g(\gamma)$ is continuous  and $\displaystyle\lim_{\gamma\to0^+}g(\gamma)=+\infty$, there exists $\gamma\in(0,\gamma_1]$ such that
\begin{equation}\label{Rinfty2}
\frac{R_1^2+\gamma^2-r^2}{2\gamma}=h. 
\eeq
Simple geometry  tells us that 
\[\bald
K&\,\subset B_r\cap\{x\in\R^m\mid x_m\leq0\} \ \cup \ 
\{x\in\R^m\mid x_1^2+\cdots+x_{m-1}^2<R_1^2,\, -R_1<x_m<-\gamma\}
\\&\,\subset B_R(-he_m).
\eald
\]
This inclusion ensures that $\rho(R)\geq h$ and hence 
$R^2-\rho(R)^2\leq R^2-h^2=r^2$.  Hence, we have 
$R^2-\rho(R)^2\leq r^2$ for any $R>\sqrt{h_1^2+r^2}$. Since $r>0$ is arbitrary, 
we conclude \eqref{Rinfty}. 

In case when $\rho(R)=R$ for some $R\geq R_0$, we fix such an $R\geq R_0$ and 
set 
\[
\omega:=B_R(-\rho(R)e_m)=B_R(-Re_m).
\]
It is clear that $\omega$ is a bounded, open, strictly convex subset of $\R^m$ 
and that $K\subset\ol\omega$ and $\omega \subset\{x\in\R^m\mid x_m<0\}$. 

Now, we consider the general case. 
We set 
\[
\Delta=\bigcap_{R\geq R_0}\ol B_R(-\rho(R)e_m).
\]
It is obvious that $\Delta$ is compact and convex and that $K\subset\Delta$.  
Since $K\subset B_{R_0}$, $K\stm\{0\}\subset \{x\in\R^m\mid x_m<0\}$ and $K$ is compact, it is easily seen that $\rho(R_0)>0$. 
Moreover, since $0\in \ol B_R(-\rho(R)e_m)$ for all $R\geq R_0$ and $R\mapsto \rho(R)$ is nondecreasing, we find that $
-\rho(R_0)e_m\in \Delta$. 

We define $\omega$ as the interior $\INT\Delta$ of $\Delta$. 
We need only to show that $\omega \neq \emptyset$, 
which implies by the convexity of $\Delta$ that $\ol\omega =\Delta$, 
and also that $\omega$ is strictly convex  
and contained in $\{x\in\R^m\mid x_m<0\}$. 

For this, we first check that $\omega\subset\{x\in\R^m\mid x_m<0\}$. 
It is enough to show that 
\begin{equation}\label{suplevel}
\Delta\cap \{x\in\R^m\mid x_m\geq 0\}=\{0\}. 
\eeq
Fix any $x\in \Delta$, with $x_m\geq 0$ and note that for $R\geq R_0$,
\[
R^2\geq |x+\rho(R)e_m|^2=|x|^2+2\rho(R)x_m+\rho(R)^2\geq |x|^2+\rho(R)^2,
\]
and, accordingly, 
\[
R^2-\rho(R)^2\geq |x|^2. 
\]
Hence, \erf{Rinfty} implies that $x=0$.  

Next, fix any $\varepsilon>0$ and set
\begin{equation}\label{Rep}
R_\varepsilon=R_0+\frac{R_0^2-\rho(R_0)^2+2\varepsilon\rho(R_0)}{2\varepsilon}
\eeq
and
\[
\Delta_\varepsilon=\bigcap_{R_0\leq R\leq R_\varepsilon}\ol B_R(-\rho(R)e_m). 
\]
We observe that for any $x\in \Delta_\varepsilon\cap\{y\in\R^m\mid y_m<-\varepsilon\}$, if $R>R_\varepsilon$, then we have
\[\bald
|x+\rho(R)e_m|^2&\,=|x+\rho(R_0)e_m+(\rho(R)-\rho(R_0))e_m|^2
\\&\,
\leq R_0^2+(\rho(R)-\rho(R_0))^2 +2(\rho(R)-\rho(R_0))(x+\rho(R_0)e_m)\cdot e_m
\\&\,\leq R_0^2+(\rho(R)-\rho(R_0))^2 +2(\rho(R)-\rho(R_0))(-\varepsilon+\rho(R_0))
\\&\,= R_0^2+\rho(R)^2-\rho(R_0)^2-2\varepsilon(\rho(R)-\rho(R_0)),
\eald\]
and, since 
\[
\rho(R)\geq R-R_0>R_\varepsilon-R_0=\frac{R_0^2-\rho(R_0)^2+2\varepsilon\rho(R_0)}{2\varepsilon},
\]
\[
|x+\rho(R)e_m|^2\leq R_0^2+\rho(R)^2-\rho(R_0)^2-(R_0^2-\rho(R_0)^2)=
\rho(R)^2\leq R^2.
\]
Hence, we find that
\[
\Delta_\varepsilon \cap\{x\in\R^m\mid x_m<-\varepsilon\}\subset \Delta\cap
 \{x\in\R^m\mid x_m<-\varepsilon\}.
\]
The reverse inclusion is trivial and thus we have
\begin{equation} \label{level<-ep}
\Delta_\varepsilon \cap\{x\in\R^m\mid x_m<-\varepsilon\}= \Delta\cap 
\{x\in\R^m\mid x_m<-\varepsilon\}.
\eeq

Let $x,y\in\Delta \stm\{0\}$, with $x\not= y$ and $0<t<1$. 
By \erf{suplevel}, we may select $\varepsilon>0$ so that $x_m,y_m<-2\varepsilon$. 
It follows that $tx_m+(1-t)y_m<-2\varepsilon$. Define $R_\varepsilon>R_0$ by \erf{Rep}. 
Since 
\[
x,y\in \bigcap_{R_0\leq R\leq R_\varepsilon}B_R(-\rho(R)e_m), 
\]
thanks to Lemma \ref{ball u-convexity},  
we can choose $\delta\in (0,\,\varepsilon/2)$ such that 
\[
B_\delta(tx+(1-t)y)\subset\bigcap_{R_0\leq R\leq R_\varepsilon}\ol B_R(-\rho(R)e_m),
\]
which readily yields
\[
B_\delta(tx+(1-t)y)\subset\bigcap_{R_0\leq R\leq R_\varepsilon}\ol B_R(-\rho(R)e_m)
\cap\{z\in\R^m\mid z_m<-\varepsilon\}. 
\]
Using identity \erf{level<-ep}, we find that 
\begin{equation}\label{strictconv<0}
B_\delta(tx+(1-t)y)\subset \Delta. 
\eeq
In particular, this, with $x=-\rho(R_0)e_m$ and $y=-(\rho(R_0)/2)e_m$, ensures that 
$\omega\not=\emptyset$. 

Inclusion \erf{strictconv<0} implies the strict convexity of $\omega$. 
Indeed, let $x,y\in\Delta$, with $x\neq y$, and $0<t<1$. If $x, y$ are both not zero, 
then \erf{strictconv<0} shows that $tx+(1-t)y\in\omega$. Otherwise, we may assume that $y=0$.   
Note  
that $z:=(t/2)x\in\Delta$, $z\neq 0$ and 
\[
tx=\frac{t}{2-t}x+\left(1-\frac{t}{2-t}\right)z
\]
and apply \erf{strictconv<0}, to conclude that $tx\in\omega$.
Thus, we find that $\omega$ is strict convex and completes the proof. 
\end{proof}

\bproof[Proof of Lemma \ref{relative int}] If $A=\{0\}$, then the conclusion of the lemma is obvious since $V=\{0\}$ and $\dim V=0$. (As usual, we agree that the linear span of $\emptyset$ is $\{0\}$.) Assume that $A\not =\{0\}$. Consider all the 
collections $\{b_1,\ldots,b_j\}$ of linearly independent vectors $b_i\in A$. Obviously, 
we have $1\leq j \leq N$ for any such collection $\{b_1,\ldots,b_j\}$. Select 
a such collection
$\{a_1,\ldots,a_k\}$, with maximum number of elements $k$. 
Since $\{a_1,\ldots,a_k\}\subset A$, it follows that $\{a_1,\ldots,a_k\}\subset V$, 
and $\Span\{a_1,\ldots,a_k\}\subset V$. 
Suppose for the moment that $\Span\{a_1,\ldots,a_k\}\not= V$, which implies that  
$A\stm \Span\{a_1,\ldots,a_k\}\not=\emptyset$. 
Then there exists $a_{k+1}\in A\stm \Span\{a_1,\ldots,a_k\}$, which means that 
$\{a_1,\ldots,a_{k+1}\}\subset A$ is a collection of linearly independent vectors. 
This contradicts the choice of $\{a_1,\ldots,a_k\}$ and proves that 
$\Span\{a_1,\ldots,a_k\}= V$ (as well as $k=m$).  
\end{proof}

In a recent article \cite{BR}, Blanc and Rossi have  introduced the following notion. 
Here, unlike \cite{BR}, we are only concerned with convex domains. 

\begin{definition}{\textbf{(${\mathcal G}_j$ condition)}}\\
Let $j\in\left\{1,\ldots,N\right\}$ and let $\Omega\subset\RN$ be a bounded convex domain. We say that 
 $\Omega\in{\mathcal G}_j$   if for any $y\in\partial\Omega$ and any $r>0$ there exists $\delta>0$ such that for every $x\in B_{\delta}(y)\cap\Omega$ and $S\subset\RN$ subspace with $\dim S=j$, then there exists a unit vector $v\in S$ such that 
\begin{equation}
\left\{x+tv\right\}_{t\in\R}\cap B_r(y)\cap\partial\Omega\neq\emptyset.
\end{equation}
\end{definition}
In \cite{BR} they consider the problem
$$\left\{\begin{array}{cc}
\lambda_j(D^2u)=0 &\mbox{in}\ \Omega\\
u=g &\mbox{on}\ \partial\Omega
\end{array}\right.
$$
and they prove that 
if $\Omega\in{\mathcal G}_j\cap {\mathcal G}_{N-j}$ then the above Dirichlet problem is solvable for any $g$
while, if $\Omega$ is not in ${\mathcal G}_j\cap {\mathcal G}_{N-j}$ then there may be some $g$ for which the 
problem is not solvable. This problem is very much related with the results in the present article hence we prove the 
following equivalence.

\begin{proposition} When $\Omega$ is 
bounded, open and 
convex, 
$\Omega\in{\mathcal G}_j$ if and only if $\Omega\in{\mathcal C}_{N-j+1}$.
\end{proposition}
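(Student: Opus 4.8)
The plan is to show, using Theorem~\ref{thm-Cj}, that it is enough to prove the equivalence $\Omega\notin\cG_j\iff d(\Omega)\geq j$; indeed Theorem~\ref{thm-Cj} gives $\Omega\in\cC_{N-j+1}\iff d(\Omega)\leq N-(N-j+1)=j-1$, and the claim follows by contraposition. Recall that $d(\Omega)\geq j$ means precisely that there are $y_0\in\pl\Omega$, a $j$-dimensional linear subspace $V$, and $\rho>0$ with $y_0+(V\cap B_\rho)\subset\pl\Omega$; after a translation we may take $y_0=0$.

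First I would treat the implication $d(\Omega)\geq j\Rightarrow\Omega\notin\cG_j$. Fix any $z_0\in\Omega$. Using the standard fact that the half-open segment from an interior point of a convex set to a point of its closure lies in the interior, applied with the endpoint running over the flat disk $V\cap B_\rho\subset\pl\Omega$, one obtains, for every $\theta\in(0,1)$, the inclusion $\theta z_0+\bigl(V\cap B_{(1-\theta)\rho}\bigr)\subset\Omega$. I then claim that the defining property of $\cG_j$ fails with the witnesses $y:=0$ and $r:=\rho/4$: given $\delta>0$, pick $\theta\in(0,1/2)$ so small that $x:=\theta z_0$ satisfies $|x|<\min(\delta,\rho/4)$, and take $S:=V$. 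For a unit vector $v\in V$, any point $x+tv$ that lies in $B_r(0)$ has $|t|\leq|x+tv|+|x|<\rho/2<(1-\theta)\rho$, hence $x+tv=\theta z_0+tv\in\Omega$ and in particular $x+tv\notin\pl\Omega$; thus $(x+\R v)\cap B_r(0)\cap\pl\Omega=\emptyset$ for every unit $v\in S$, which is exactly the negation of the $\cG_j$ condition.

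Next I would prove $\Omega\notin\cG_j\Rightarrow d(\Omega)\geq j$. By the negation of the definition there are $y\in\pl\Omega$ and $r>0$ such that for each $n$ one can choose $x_n\in B_{1/n}(y)\cap\Omega$ and a $j$-dimensional subspace $S_n$ for which $(x_n+\R v)\cap B_r(y)\cap\pl\Omega=\emptyset$ for every unit $v\in S_n$. Since $\Omega$ is bounded and convex and $x_n\in\Omega$, each line $x_n+\R v$ meets $\ol\Omega$ in a nondegenerate compact segment whose two endpoints lie on $\pl\Omega$, hence outside $B_r(y)$; as $x_n\to y$, once $|x_n-y|<r/2$ this forces $x_n+tv\in\Omega$ for all $|t|<r/2$, i.e. $x_n+(S_n\cap B_{r/2})\subset\Omega$. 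The Grassmannian of $j$-planes in $\R^N$ being compact, pass to a subsequence with $S_n\to S$; letting $n\to\infty$, and approximating a given $q\in S$ by its orthogonal projections onto $S_n$, gives $y+(S\cap B_{r/2})\subset\ol\Omega$. Finally pick $\mu\in N_\Omega(y)$: since this $j$-disk is symmetric about its centre $y$ and contained in the half-space $\{z:(z-y)\cdot\mu\leq0\}$, we must have $v\cdot\mu=0$ for every $v\in S$, so the disk lies in the supporting hyperplane $\{z:(z-y)\cdot\mu=0\}$, whose intersection with $\ol\Omega$ is contained in $\pl\Omega$. Hence $y+(S\cap B_{r/2})\subset\pl\Omega$ and $d(\Omega)\geq d_y(\Omega)\geq j$, which completes the equivalence.

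I expect the only genuinely delicate step to be the passage to the limit $n\to\infty$ in the second implication: one needs the escape radius $r/2$ to be independent of $n$ — which holds because $x_n\to y$ while the exit points of the lines remain at distance $\geq r$ from $y$ — and one needs the limiting subspace $S$ to still support a full $j$-dimensional disk inside $\ol\Omega$, for which compactness of the Grassmannian and continuity of orthogonal projection onto $S_n$ are precisely what is required. The remaining arguments are routine manipulations with the definitions and with Theorem~\ref{thm-Cj}.
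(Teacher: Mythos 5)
Your proposal is correct and follows essentially the same route as the paper: both reduce, via Theorem~\ref{thm-Cj}, to the equivalence $\Omega\notin\cG_j\iff d(\Omega)\geq j$, and prove each implication by the same elementary convex-geometry arguments (pushing the flat face inward along a segment to an interior point, and passing to a Grassmannian limit of the subspaces $S_n$). The only material difference is that you spell out, using an outward normal $\mu\in N_\Omega(y)$ and the central symmetry of the limiting $j$-disk, why $y+(S\cap B_{r/2})\subset\ol\Omega$ actually forces $y+(S\cap B_{r/2})\subset\pl\Omega$, a step the paper dispatches with ``it is easily seen.''
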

\begin{proof} The property that $\Omega\not\in\cG_j$ can be stated as follows: 
there exist $y\in\pl\Omega$ and $r>0$ such that for any $\delta>0$, there 
exist $x\in B_\delta(y)\cap \Omega$ and a linear subspace $S$ of $\R^N$, 
with $\dim V=j$,  
for which 
\[
\{x+tv\}_{t\in \R}\cap B_r(y)\cap \pl\Omega=\emptyset \ \ \text{ for any }v\in S, 
\text{ with }|v|=1.
\]
This equality reads
\[
(x+S)\cap B_r(y)\cap \pl\Omega=\emptyset,
\]
and moreover, since $x\in\Omega$, 
\begin{equation} \label{negativeGj}
(x+S)\cap B_r(y)\subset\Omega. 
\eeq

Now, we assume that $\Omega\not\in\cG_j$ and prove that $d(\Omega)\geq j$. 
By the above consideration, there exist $y\in\pl\Omega$ and $r>0$ such that 
for each $k\in\N$, there exist $x\in B_{1/k}(y)\cap\Omega$ and a linear 
subspace $S_k\subset\R^N$, with $\dim S_k=j$, such that \erf{negativeGj} 
holds with $x=x_k$ and $S=S_k$. 

Noting that $\lim_{k\to\infty}x_k=y$ and taking limit as $k\to \infty$ along 
an appropriate subsequence, we can find a linear subspace $S\subset\R^N$, 
with $\dim S=j$, such that 
\begin{equation}\label{negativeGjlimit}
(y+S)\cap B_r(y)\subset\ol\Omega. 
\eeq
(Here, regarding the convergence of $S_k$, one may fix an orthonormal basis 
$\{v_{k,1},\ldots,v_{k,j}\}$ of $S_k$ for each $k$ and look for 
a subsequence of the $k$ for which $\{v_{k,1},\ldots,v_{k,j}\}$ converge in $\R^{N\tim j}$.)
Since $(y+S)\cap B_r(y)$ is a $j$-dimensional ball, with center $y\in\pl\Omega$ 
and $\Omega$ is convex, it is easily seen by \erf{negativeGjlimit} that 
\[
(y+S)\cap B_r(y)\subset \pl\Omega,
\]
which shows that $d(\Omega)\geq j$. 

Next, we assume that $d(\Omega)\geq j$ and prove that $\Omega\not\in \cG_j$. 
This assumption implies that there exist $y\in\pl\Omega$ and a linear subspace $S\subset \R^N$, with $\dim S\geq j$, such that 
\[
y+S\cap B_r\subset\pl\Omega. 
\]
We may assume by replacing $S$, by a subspace of $S$ if necessary, that $\dim S=j$.  
Since $\pl\Omega\neq\emptyset$, there exists a point $z\in\Omega$. By the convexity 
of $\Omega$, with nonempty interior, we see that 
\[
tz+(1-t)y +S\cap B_{(1-t)r}=t z+(1-t)(y+S\cap B_r)\subset \Omega \ \ \text{ for } t\in(0,\,1).
\]
Hence, for $t\in(0,1/2)$, if we set $x_t=tz+(1-t)y$, then 
\[
(x_t +S)\cap B_{r/2}(x_t)=x_t+S\cap B_{r/2} \subset x_t+S\cap B_{(1-t)r}
\subset \Omega,
\]
and, also, $\lim_{t\to 0}x_t =y$.  This shows that $\Omega\not\in\cG_j$. 
Thus, we see that $\Omega\in\cG_j$ if and only if $d(\Omega)\leq j-1$. 
This observation and Theorem \ref{thm-Cj} assure that 
$\Omega\in\cG_j$ if and only if $\Omega\in\cC_{N-j+1}$. 
\end{proof}


\begin{thebibliography}{99}
\bibitem{AGV} M.E. Amendola, G. Galise, A. Vitolo, \emph{Riesz capacity, maximum principle, and removable sets of fully nonlinear second-order elliptic operators},  Differential Integral Equations 26 (2013), 845-866.
\bibitem{AS} L. Ambrosio, H. M. Soner, \emph{Level set approach to mean curvature flow in arbitrary codimension},  J. Differential Geom. 43 (1996),  693-737. 
\bibitem{BNV} H. Berestycki, L. Nirenberg, S. Varadhan, \emph{The principle eigenvalue and maximum principle for second
order elliptic operators in general domains}, Comm. Pure. Appl. Math 47 (1) (1994) 47-92.
\bibitem{BGI} I. Birindelli, G. Galise, H. Ishii, \emph{A family of degenerate elliptic operators: Maximum principle and its consequences}, Ann. Inst. H. Poincar\'e Anal. Non Lin\'eaire,  35 (2018), no. 2, 417-441. 
\bibitem{BGI1} I. Birindelli, G. Galise, H. Ishii, \emph{Towards a reversed Faber-Krahn inequality for the truncated Laplacian}, to appear on Rev. Mat. Iberoam.
\bibitem{BGL} I. Birindelli, G. Galise, F. Leoni \emph{Liouville theorems for a family of very degenerate elliptic
nonlinear operators}, Nonlinear Analysis 161 (2017), 198-211.
\bibitem{BR}P. Blanc, J. D. Rossi, \emph{Games for eigenvalues of the Hessian and concave/convex envelopes} Jour. de Math\'ematiques Pure et appliqu\'ees,  (2019) in print.
\bibitem{CC}L. A. Caffarelli, X. Cabr\'e, \emph{Fully nonlinear elliptic equations} 
American Mathematical Society Colloquium Publications, 43. American Mathematical Society, Providence, RI, 1995. vi+104 pp. 
\bibitem{CDLV} I. Capuzzo Dolcetta, F. Leoni, A. Vitolo \emph{ On the inequality $F(x,D^2u)\geq f(u)+g(u)|Du|^q$.} Math. Ann. 365 (2016), no. 1-2, 423-448.
\bibitem{CLN1} L. Caffarelli, Y. Y. Li, L. Nirenberg, \emph{Some remarks on singular solutions of nonlinear elliptic equations. I}, J. Fixed Point Theory Appl. 5 (2009), 353-395.
\bibitem{Cprimer} M. G. Crandall, \emph{Viscosity solutions: a primer}, Viscosity solutions and applications (Montecatini Terme, 1995), 1-43, Lecture Notes in Math., 1660, Springer, Berlin, 1997.
\bibitem{CIL} M. G. Crandall, H. Ishii, P.-L. Lions, \emph{User's guide to viscosity solutions of second order partial differential equations}, Bull. Amer. Math. Soc. (N.S.) 27 (1) (1992) 1-67.
\bibitem{G} G. Galise, \emph{On positive solutions of fully nonlinear degenerate
Lane-Emden type equations}, J. Differential Equations 266 (2019), 1675-1697.
\bibitem{GV} G. Galise, A. Vitolo, \emph{Removable singularities for degenerate elliptic Pucci operators}, Adv. Differential Equations 22 no. 1/2, (2017), 77-100.
\bibitem{HL1}  F. R. Harvey, H. B. Jr. Lawson, \emph{Dirichlet duality and the nonlinear Dirichlet problem}, Comm. Pure Appl. Math. 62 (2009),  396-443.
\bibitem{HL2}  F. R. Harvey, H. B. Jr. Lawson, \emph{$p$-convexity, $p$-plurisubharmonicity and the Levi problem},  Indiana Univ. Math. J. 62 (2013), 149-169.
\bibitem{IY} H. Ishii, Y. Yoshimura, \emph{Demi-eigenvalues for uniformly elliptic Isaacs operators}, preprint.
\bibitem{OS} A. M. Oberman, L. Silvestre, \emph{The Dirichlet problem for the convex envelope}, Trans. Amer. Math. Soc. 363 (2011), 5871-5886.
\bibitem{Sha} J. P. Sha,  \emph{$p$-convex Riemannian manifolds}, Invent. Math. 83(3) (1986) 437-447.
\bibitem{Wu} H. Wu,  \emph{Manifolds of partially positive curvature},  Indiana Univ. Math. J. 36 (1987),  525-548.
%



%
%
 
%

\end{thebibliography}
\end{document}